\newtheorem{theorem}{Theorem}[section]
\newtheorem{proposition}[theorem]{Proposition}
\newtheorem{lemma}[theorem]{Lemma}
\newtheorem{corollary}[theorem]{Corollary}
\theoremstyle{definition}
\newtheorem{definition}[theorem]{Definition}
\newtheorem{example}[theorem]{Example}
\newtheorem{remark}[theorem]{Remark}
\newtheorem*{notation}{Notation}
\DeclareMathOperator{\conv}{conv}
\DeclareMathOperator{\cone}{cone}
\DeclareMathOperator{\relint}{relint}
\DeclareMathOperator{\spann}{span}
\DeclareMathOperator{\supp}{supp}
\DeclareMathOperator{\sgn}{sgn}
\newcommand{\set}[1]{\{#1\}}
\newcommand{\with}{\ \vrule\ }
\newcommand{\N}{\mathbb{N}}
\newcommand{\R}{\mathbb{R}}
\newcommand{\xb}{\mathbf{x}}
\newcommand{\yb}{\mathbf{y}}
\newcommand{\wb}{\mathbf{w}}
\newcommand{\pb}{\mathbf{p}}
\newcommand{\cE}{\mathcal{E}}
\newcommand{\cA}{\mathcal{A}}
\newcommand{\cB}{\mathcal{B}}
\newcommand{\cS}{\mathcal{S}}
\newcommand{\CS}{C_\cS}
\newcommand{\re}{r_{\mathrm{e}}}
\newcommand{\ro}{r_{\mathrm{o}}}
\newcommand{\Po}[1]{P^{\mathrm{odd}}_{#1}}
\newcommand{\Pe}[1]{P^{\mathrm{even}}_{#1}}
\author{Lukas Katth\"an}
\author{Helen Naumann}
\author{Thorsten Theobald}
\address{Lukas Katth\"an, Helen Naumann, Thorsten Theobald:
	Goethe-Universit\"at, FB 12 -- Institut f\"ur Mathematik,
	Postfach 11 19 32, D--60054 Frankfurt am Main, Germany}
\email{\{katthaen, naumann, theobald\}@math.uni-frankfurt.de}
\title[]{A unified framework of SAGE and SONC polynomials and its duality theory}
\date{\today}
\begin{document}
\begin{abstract}
We introduce and study a cone which consists
of a class of generalized polynomial functions and which 
provides a common framework for recent non-negativity 
certificates of polynomials in sparse settings. Specifically, 
this $\mathcal{S}$-cone generalizes and unifies sums of
arithmetic-geometric mean exponentials (SAGE) and
sums of non-negative circuit polynomials (SONC).
We provide a comprehensive
characterization of the dual cone of the $\mathcal{S}$-cone,
which even for its specializations provides novel and projection-free descriptions.
As applications of this result, 
we give an exact characterization of the
extreme rays of the $\mathcal{S}$-cone and thus also of its
specializations, and we provide a subclass of functions for 
which non-negativity coincides with membership in the 
$\mathcal{S}$-cone.

Moreover, we derive from the duality theory
an approximation result of non-negative univariate polynomials
and show that a SONC analogue of Putinar's Positivstellensatz
does not exist even in the univariate case.
\end{abstract}

\maketitle

\section{Introduction}

In recent years, several interrelated approaches for non-negative
polynomials and for non-negative exponential sums have been proposed,
which are aimed at sparse settings.
In \cite{chandrasekaran-shah-2016}, Chandrasekaran and Shah proposed (in the language of exponential sums/signomials) to consider sums of polynomial functions $f:\R^n_+ \to \R$ of the form $\sum_{\alpha \in \cA} c_{\alpha} \xb^{\alpha}$ for a given set $\cA\subseteq\R^n$
such that at most one term has a negative coefficient.
Non-negativity of signomials can be characterized in terms of the arithmetic-geometric mean inequality, and deciding membership in the resulting cone (SAGE cone) can be formulated as a relative entropy program.
In \cite{iliman-dewolff-resmathsci}, Iliman and de Wolff proposed to consider sums of non-negative circuit polynomials on $\R^n$ (SONC polynomials). For a certain subclass of polynomials called ST-polynomials, deciding membership in the cone of SONC polynomials can be formulated in terms of the optimization subclass of geometric programs \cite{diw-2019}.
Murray, Chandrasekaran and Wierman \cite{mcw-2018} have shown that an adaption of the SAGE setting to $\R^n$ gives exactly the same cone of polynomials as the SONC cone. This yields a computationally tractable method to decide membership of 
arbitrary polynomials in the SONC cone using a relative entropy program. 

While many aspects of these classes of polynomials are connected with open questions and research efforts, they clearly exhibit some fundamental structural phenomena adapted to sparse settings.
For example, it was shown by Murray et al.\ \cite{mcw-2018} for the SAGE cone and by Wang \cite{wang-nonneg} for the SONC cone that every polynomial in those cones has a cancellation-free representation.
Generally, SAGE and SONC approaches can be combined with semidefinite approaches to polynomial optimization, see Karaca, Darivianakis et al.\ \cite{karaca-2017} or Averkov \cite{averkov-2019}.
Moreover, by \cite[Theorem 2.16]{averkov-2019} and its proof, the SONC cone
is second-order-cone representable (but the size of the second-order 
formulation from that work depends on the actual values of the support vectors).
For a practical algorithm to compute SONC bounds via second-order cone
representations, see Magron and Wang \cite{wang-magron-2020}.

The goal of the present paper is to provide a uniform framework which covers all these classes as well as some more general settings.
Since non-negativity of a polynomial function $f(x_1, \ldots, x_n)$ on $\R_+^n$ is equivalent to non-negativity
of $f(|x_1|, \ldots, |x_n|)$ on $\R^n$, we consider the more general functions $f: \R^n \to \R \cup \set{\infty}$ of the form
\begin{equation}\label{eq:func1}
	f(\xb) = \sum_{\alpha \in \cA} c_\alpha |\xb|^{\alpha} + \sum_{\beta\in \cB} d_\beta \xb^{\beta},
\end{equation}
with sets of exponents $\cA\subseteq \R^n$, $\cB\subseteq \N^n\setminus(2\N)^n$, which also capture the signomial functions.
Based on a subset of these functions, we define the \emph{$\cS$-cone} $\CS(\cA,\cB)$ which provides the common generalization
of the cones mentioned above, see Definition~\ref{de:scone}.
Its atomic functions are called \emph{AG functions}, which are functions of the form~\eqref{eq:func1} with strong support conditions.
The AG functions can be seen as a (non-polynomial) generalization of polynomials coming from the arithmetic-geometric inequality.
Building upon the earlier work of the second and the third author \cite{dnt-2018} on the dual SONC cone, a particular focus is the structure and the use of the dual viewpoint.

Non-negative polynomials and polynomial optimization are ubiquitous in applications, and sparsity is one of the central structural properties that provides potential for efficient computation.
Besides classical application in control theory and robotics (see, e.g., \cite{ahmadi-majumdar-2019,henrion-garulli-2005} and the references therein), let us list the more recent applications of non-negative polynomials and polynomial optimization in the optimal power flow problem \cite{josz-diss}, collision avoidance \cite{ahmadi-majumdar-2016} or shape-constrained regression \cite{hall-diss}.

\subsection*{Contributions}
1. We show that fundamental properties of the SAGE and/or the SONC cone
also hold in the more general context of the $\cS$-cone.
In particular, every $f \in \CS(\cA,\cB)$ can be decomposed
into a sum of non-negative AG functions whose supports are contained
in the support of $f$. See Proposition~\ref{pr:nocancel}, which
unifies and generalizes the results of \cite{mcw-2018} for the
SAGE cone and of \cite{wang-nonneg} for the SONC cone.

2. We provide a comprehensive characterization of the dual
cone of the $\cS$-cone, see~Theorem~\ref{thm:equiv}. In particular,
we provide projection-free
characterizations in terms of AG functions supported
on the particular class of reduced circuits.
The characterizations of the dual cone go far beyond the
characterizations of the dual SAGE cone 
from \cite{chandrasekaran-shah-2016}
and the dual SONC cone from \cite{dnt-2018}, where the dual cones
are described in terms of projections. Our proofs
provide a uniform tool set for handling the various types of
cones.

3. Based on the characterizations of the dual of the 
$\cS$-cone, we provide several applications of the duality
theory.

(a) We show that every sum $f$ of non-negative AG functions can be written
as a sum of non-negative circuit functions whose supports are contained
in the support of $f$.
This unifies and generalizes the results from \cite{mcw-2018}
for the SAGE cone and of \cite{wang-nonneg} for the 
SONC cone.

(b) We give an exact characterization of the extreme rays of the
$\cS$-cone. Even for the particular case of the SAGE cone, this
characterization substantially sharpens the necessary conditions
in \cite{mcw-2018}.

(c) We show that not even in the univariate case, SONC polynomials do 
allow Putinar-type representations. This counterexample strengthens
and simplifies the result of Dressler, Kurpisz and de Wolff 
\cite{DKdW}, who have provided a multivariate counterexample.

(d) We give a characterization of a wide class of non-negative AG 
functions with simplex Newton polytopes. Using the dual
$\cS$-cone, this result unifies and generalizes the results
from \cite{iliman-dewolff-resmathsci} and \cite{mcw-2018} and provides a simpler proof.

(e) As a final application of the dual $\cS$-cone, we show that 
non-negative univariate polynomials can be approximated by SONC
polynomials.

\smallskip

As further related work, let us mention the exploitation of 
sparsity and symmetries to derive specific SDP relaxations for
polynomial optimization \cite{kkw-2005,mcd-2017,rtjl-2013,
wml-2019,wml-2020,wlt-2018}.

\section{The \texorpdfstring{$\cS$}{S}-Cone} \label{se:scone}
In this section, we introduce AG functions and the $\cS$-cone.
We show that every non-negative function in the $\cS$-cone has a cancellation-free representation 
(see Proposition \ref{pr:nocancel}) and characterize non-negativity of an AG function in terms of the relative entropy function (see Theorem \ref{thm:oddImplication}).

\begin{notation}
	Throughout the article we use the notations $\N=\{0,1,2,3,\ldots\}$ and $\R_+=\{x\in\R:x\ge 0\}$.
	Moreover, for a finite subset $\cA\subseteq\R^n$, denote by $\R^\cA$ the set of $|\cA|$-dimensional vectors whose components are indexed by the set $\cA$.
\end{notation}

Our main object of study are functions $f: \R^n \to \R \cup \set{\infty}$ of the form
\begin{equation}\label{eq:generalfunction}
	f(\xb) = \sum_{\alpha \in \cA} c_\alpha |\xb|^{\alpha} + \sum_{\beta\in \cB} d_\beta \xb^{\beta},
\end{equation}
where $\cA\subseteq \R^n$, $\cB\subseteq \N^n\setminus(2\N)^n$ are finite sets of exponents,
$\{c_\alpha: \alpha\in \cA\},\{d_\beta:\beta\in\cB\}\subseteq \R$.
Here we use the notations
\[
|\xb|^{\alpha} = \prod_{j=1}^n |x_j|^{\alpha_{j}} \qquad\text{ and }\qquad \xb^{\beta} = \prod_{j=1}^n x_j^{\beta_{j}},
\]
and if one component of $\xb$ is zero and the corresponding exponent is negative, then we set $|\xb|^\alpha = \infty$.

For two finite sets $\emptyset \neq \cA \subseteq \R^n, \cB \subseteq \N^n\setminus(2\N)^n$, let
\[
\R[\cA, \cB] := \spann_\R(\{|\xb|^{\alpha} \with \alpha\in \cA\} \cup \{\xb^{\beta} \with \beta\in \cB\})
\]
denote the space of all functions of the form \eqref{eq:generalfunction} with given sets of exponents. 
This is a vector space of dimension $\dim \R[\cA,\cB]=|\cA|+|\cB|$.

\begin{remark} \label{rem:SONCcone}
		(1)\ \ If $\cA \subseteq (2\N)^n$, then $\R[\cA, \cB]$ is exactly the space of polynomials with exponent vectors in $\cA \cup \cB$.
		For this reason, we sometimes refer to elements of $\cA$ as \emph{even exponents} and to elements of $\cB$ as \emph{odd exponents}.
	\begin{asparaenum}\setcounter{enumi}{1}
		\item If $\cB = \emptyset$, then $\R[\cA, \cB]$ can be identified with the space of \emph{signomials}, i.e., functions of the form
		\[ \yb \mapsto \sum_{\alpha \in \cA} c_\alpha \exp(\alpha^T \yb) \]
		via the identification $|x_i| = \exp(y_i)$.
		\item It is no restriction to exclude sets in $(2 \N)^n$ from $\cB$, since for exponents $\beta \in (2 \N)^n$, we have $|\mathbf{x}|^{\beta} = \mathbf{x}^\beta$.
		\item $\cA$ and $\cB$ are not necessarily disjoint (cf.\ Example~\ref{ex:circuitfunc} below).
		\end{asparaenum}
\end{remark}

We study the non-negativity of functions in $\R[\cA, \cB]$ using the following building blocks:
\begin{definition}
	Let $f = \sum_{\alpha \in \cA} c_\alpha |\xb|^\alpha + \sum_{\beta \in \cB} d_\beta \xb^\beta$.
	We say that $f$ is
	\begin{enumerate}
		\item an \emph{even AG function} if at most one of the $c_\alpha$ is negative and all the $d_\beta$ are zero; and
		\item an \emph{odd AG function} if all the $c_\alpha$ are non-negative and at most one of the $d_\beta$ is nonzero.
	\end{enumerate} 
	$f$ is called an \emph{AG function} (\emph{arithmetic-geometric mean function}) 
	if $f$ is an even AG function or
	an odd AG function.
\end{definition}

Note that non-negative even AG functions correspond exactly to the 
AGE functions (arithmetic-geometric exponentials) studied in 
\cite{chandrasekaran-shah-2016} and \cite{mcw-2018}.

We arrive at the central definition of this section.
\begin{definition}[$\cS$-cone] \label{de:scone}
	Let $\emptyset \neq \cA \subseteq \R^n$, $\cB \subseteq \N^n\setminus(2\N)^n$ be finite sets.
	The \emph{$\cS$-cone} $\CS(\cA, \cB)$ is defined as
	\[ \CS(\cA, \cB) := \cone( f\in \R[\cA, \cB] \with f \text{ is a non-negative AG function}), 
	\]
	where $\cone$ denotes the conic hull.
\end{definition}

\begin{remark}\hspace*{0.1ex}\label{rem:SONCequals}
	\begin{asparaenum}
		\item If $\cB = \emptyset$, then the $\cS$-cone can be identified with the SAGE cone using the substitution in Remark \ref{rem:SONCcone}(2). Formally, for 
		finite $\cA\subseteq\R^n$, $\cA' \subsetneq \cA$ and 
		$\beta \in \cA \setminus \cA'$, we set
	    \[
	      C_{\mathrm{SAGE}}(\cA) = \sum_{\beta \in \cA} 
		  C_{\text{AGE}}(\cA \setminus \{\beta\},\beta),
		\]
		where for $\mathcal{A'} := \mathcal{A} \setminus \{\beta\}$
				\[
				C_{\mathrm{AGE}}(\cA',\beta) = \Big\{c\in\R^\cA: 
				c_{\alpha} \ge 0 \text{ for } \alpha \in \cA', \,
				\sum\limits_{\alpha\in\cA'} c_\alpha \exp(\alpha^Tx)
				+ c_\beta \exp(\beta^Tx)\ge 0
				\text{ on } \R^n\Big\}.
				\]
		\item 
		If $\cA \subseteq (2 \N)^n$, then $\CS(\cA,\cB)$ is the cone
		of SONC polynomials supported on $\cA \cup \cB$ from \cite{iliman-dewolff-resmathsci,averkov-2019}. In those papers, the SONC cone is defined in terms of circuit polynomials (see Remark~\ref{re:circuits1}). The equivalence of the definitions
		was established in \cite{mcw-2018} and also follows from our more general
		result in Proposition~\ref{prop:AGisscf}.
		\item An example where 
		the cone ${\CS}(\cA,\cB)$ is different from both the SAGE cone and the SONC cone is given by $\cA=\set{1,4}$ and $\cB=\set{3}$.
	\end{asparaenum}
\end{remark}

For a non-empty finite set 
$\cA\subseteq \R^n$ and $\beta\in \N^n \setminus(2\N)^n$ let
\[
\Po{\cA, \beta} := \left\{ f \with f = \sum_{\alpha \in \cA} c_\alpha |\xb|^\alpha + d \xb^{\beta}, f(\xb) \geq 0 \; \,  \forall\; \xb\in\R^n, c\in\R_+^\cA, d \in\R\right\}
\]
be the cone of non-negative odd AG functions supported on $(\cA, \beta)$, and similarly  for $\beta\in\R^n \setminus \cA$ let
\[
\Pe{\cA, \beta} := \left\{ f \with f = \sum_{\alpha \in \cA} c_\alpha |\xb|^\alpha + d |\xb|^{\beta}, f(\xb) \geq 0 \; \, \forall \;\xb\in\R^n, c\in\R_+^\cA, d \in\R\right\}
\]
be the cone of non-negative even AG functions supported on $(\cA, \beta)$.
Note that, by definition, 
\begin{align} \label{eq:decomp}
\CS(\cA, \cB) = \sum_{\alpha \in \cA} \Pe{\cA\setminus\set{\alpha}, \alpha} + \sum_{\beta \in \cB} \Po{\cA, \beta}.
\end{align}

As pointed out by a referee, \eqref{eq:decomp} implies the following alternative representation of the $\cS$-cone. 

\begin{proposition}\label{prop:SconeSAGE}
	Let $\emptyset\ne \cA\subseteq\R^n$, $\cB\subseteq\N^n\setminus(2\N)^n$ be
	finite and $\mathbf{e}_\alpha$ denote the unit vector in $\R^{\mathcal{A} \cup \mathcal{B}}$ indexed with $\alpha\in\cA\cup\cB$. Then,
	\begin{align*}
		\CS(\cA,\cB)= & \left\{\sum\limits_{\alpha\in\cA}c_\alpha|x|^\alpha + \sum\limits_{\beta\in\cB}d_\beta x^\beta\in\R[\cA,\cB]: \sum\limits_{\alpha\in\cA}c_\alpha \cdot \mathbf{e}_\alpha - \sum\limits_{\beta\in\cB}|d_\beta|\cdot \mathbf{e}_\beta\in C_{\mathrm{SAGE}}(\cA\cup\cB)\right\}\\
		= & \left\{\sum\limits_{\alpha\in\cA}c_\alpha|x|^\alpha + \sum\limits_{\beta\in\cB}d_\beta x^\beta\in\R[\cA,\cB]: \right. \\ 
& \: \left.
\exists t\in\R^\cB, \sum\limits_{\alpha\in\cA}c_\alpha \cdot \mathbf{e}_\alpha + \sum\limits_{\beta\in\cB}t_\beta\cdot \mathbf{e}_\alpha\in C_{\mathrm{SAGE}}(\cA\cup\cB), \,
		t_\beta\le -|d_\beta| \text{ for all }\beta\in\cB\right\}.
	\end{align*}
\end{proposition}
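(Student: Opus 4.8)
The plan is to prove the first displayed equality; the second one then follows formally, since $t=-|d|$ is always admissible and, conversely, any $t$ with $t_\beta\le -|d_\beta|$ can be replaced by $-|d|$ by adding the non-negative vector $-|d|-t$, using $C_{\mathrm{SAGE}}(\cA\cup\cB)+\R_{\ge 0}^{\cA\cup\cB}=C_{\mathrm{SAGE}}(\cA\cup\cB)$. I would also record that both sides depend on $f=\sum_\alpha c_\alpha|\xb|^\alpha+\sum_\beta d_\beta\xb^\beta$ only through the $c_\alpha$ and the numbers $|d_\beta|$: for $\CS(\cA,\cB)$ this is because, as some coordinate of each $\beta\in\cB$ is odd, flipping the sign of the corresponding $x_j$ negates $\xb^\beta$ and fixes every $|\xb|^\alpha$. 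Hence we may assume $d_\beta\le 0$, so that $v(f):=\sum_\alpha c_\alpha\mathbf{e}_\alpha-\sum_\beta|d_\beta|\mathbf{e}_\beta$ is just the coefficient vector of $f$ read in $\R^{\cA\cup\cB}$.

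For ``$\subseteq$'' I would expand $f\in\CS(\cA,\cB)$ via \eqref{eq:decomp} as $f=\sum_{\alpha\in\cA}f_\alpha+\sum_{\beta\in\cB}g_\beta$ with $f_\alpha\in\Pe{\cA\setminus\{\alpha\},\alpha}$, $g_\beta\in\Po{\cA,\beta}$, and pass to coefficient vectors. Under the substitution $|x_i|=\exp(y_i)$ of Remark~\ref{rem:SONCcone}(2), non-negativity of $f_\alpha$ says exactly that its ($\cA$-supported) coefficient vector lies in $C_{\mathrm{AGE}}((\cA\cup\cB)\setminus\{\alpha\},\alpha)$; and for $g_\beta=\sum_a e^{(\beta)}_a|\xb|^a+d^{(\beta)}\xb^\beta$ the same sign-flip shows that $g_\beta\ge 0$ on $\R^n$ is equivalent to $\sum_a e^{(\beta)}_a|\xb|^a\ge|d^{(\beta)}|\,|\xb|^\beta$, i.e.\ to $\sum_a e^{(\beta)}_a\mathbf{e}_a-|d^{(\beta)}|\mathbf{e}_\beta\in C_{\mathrm{AGE}}((\cA\cup\cB)\setminus\{\beta\},\beta)$. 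Adding all of these vectors in $\R^{\cA\cup\cB}$ (the $|\xb|^a$-coefficients add linearly, and $\xb^\beta$ occurs only in $g_\beta$, so the $\beta$-entry of the sum is $-|d_\beta|$) gives $v(f)\in\sum_{\gamma}C_{\mathrm{AGE}}((\cA\cup\cB)\setminus\{\gamma\},\gamma)=C_{\mathrm{SAGE}}(\cA\cup\cB)$.

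For ``$\supseteq$'' I would start from $v:=v(f)\in C_{\mathrm{SAGE}}(\cA\cup\cB)$ with $v_\beta=d_\beta\le 0$ and a decomposition $v=\sum_{\gamma\in\cA\cup\cB}v^{(\gamma)}$ with $v^{(\gamma)}\in C_{\mathrm{AGE}}((\cA\cup\cB)\setminus\{\gamma\},\gamma)$. If this decomposition is ``$\cB$-separated'', meaning each coordinate $\beta\in\cB$ occurs only in the single piece $v^{(\beta)}$, then we are done: each $v^{(\alpha)}$ (then supported on $\cA$) yields $\sum_a v^{(\alpha)}_a|\xb|^a\in\Pe{\cA\setminus\{\alpha\},\alpha}$; each $v^{(\beta)}$ (supported on $\cA\cup\{\beta\}$, non-negative on $\cA$, with necessarily $v^{(\beta)}_\beta=v_\beta\le 0$) yields the non-negative odd AG function $\sum_a v^{(\beta)}_a|\xb|^a+d_\beta\xb^\beta\in\Po{\cA,\beta}$; and the sum of all of them is $f$, so $f\in\CS(\cA,\cB)$ by \eqref{eq:decomp}. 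The main obstacle is therefore to turn an arbitrary SAGE decomposition into a $\cB$-separated one, i.e.\ to eliminate any entry $v^{(\gamma_0)}_{\beta_0}>0$ with $\gamma_0\ne\beta_0\in\cB$; observe that then $v_{\beta_0}<0$, so the piece $v^{(\beta_0)}$ carries a strictly negative $\beta_0$-entry and hence admits a convex combination $\beta_0=\sum_\delta\lambda_\delta\delta$, $\lambda_\delta\ge 0$, $\sum_\delta\lambda_\delta=1$, supported on $\supp(v^{(\beta_0)})\setminus\{\beta_0\}$, witnessing its membership in $C_{\mathrm{AGE}}$. I would remove such an entry by an exchange move: transfer a small amount $\tau>0$ of $\beta_0$-mass from $v^{(\gamma_0)}$ to $v^{(\beta_0)}$ and return $\tau\lambda_\delta$ of $\delta$-mass from $v^{(\beta_0)}$ to $v^{(\gamma_0)}$ for each $\delta$; the arithmetic-geometric mean inequality keeps $v^{(\gamma_0)}$ in its AGE cone unconditionally and, for $\tau$ within the slack of the AM-GM bound of $v^{(\beta_0)}$, keeps $v^{(\beta_0)}$ in its AGE cone, while the non-negative monovariant $\sum_{\gamma}\sum_{\beta\in\cB\setminus\{\gamma\}}v^{(\gamma)}_\beta$ strictly decreases. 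Iterating this (with a perturbation $v\leadsto v+\varepsilon\sum_{a\in\cA}\mathbf{e}_a$ and a limiting argument, using closedness of the $\cB$-separated cone, to handle rigid pieces with no slack) produces a $\cB$-separated decomposition and finishes the proof. The case $v|_\cB=0$ (no odd part), and the bookkeeping when $\cA\cap\cB\ne\emptyset$, are routine.
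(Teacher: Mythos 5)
Your forward direction and the passage between the two displayed sets are the same as the paper's: decompose $f$ via \eqref{eq:decomp}, convert each odd piece to its even counterpart by the sign-flip symmetry, and invoke Remark~\ref{rem:SONCequals}(1); then observe that $t_\beta=-|d_\beta|$ is always admissible and that $C_{\mathrm{SAGE}}+\R_{+}^{\cA\cup\cB}\subseteq C_{\mathrm{SAGE}}$. The normalization $d_\beta\le 0$ via per-piece sign flips is a clean bookkeeping device the paper does not bother with, but it is sound.

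Where you part ways is the converse. The paper dispatches it in one sentence (``follows immediately with the substitution''), while you correctly observe that this is not immediate: a SAGE decomposition of $\tilde f$ can place positive mass on a coordinate $\beta\in\cB$ inside a piece $v^{(\gamma)}$ centered at $\gamma\neq\beta$, and such a piece lives in $\R[\cA\cup\cB,\emptyset]$, not in $\R[\cA,\cB]$ — it has no direct interpretation as an AG function on $(\cA,\cB)$ (an even AG function in $\R[\cA,\cB]$ cannot carry an exponent from $\cB\setminus\cA$, and an odd one must have all $c_\alpha\ge 0$). Reducing to a ``$\cB$-separated'' decomposition is exactly the right reformulation, and once you have one, your identification of the pieces with $\Pe{\cA\setminus\{\alpha\},\alpha}$ and $\Po{\cA,\beta}$ is correct. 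So you have isolated a genuine subtlety that the paper glosses over (and which cannot be resolved by appealing to Proposition~\ref{pr:nocancel} or to the dual characterization in Theorem~\ref{thm:equiv} without circularity, since those are proved from the present proposition).

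However, the exchange-move argument you sketch to produce a $\cB$-separated decomposition is not yet a proof. Two concrete gaps: first, your move with ``small $\tau$'' only shrinks the offending entry $v^{(\gamma_0)}_{\beta_0}$ rather than killing it, and the proposed monovariant strictly decreases without any lower bound on the decrement, so termination (or convergence to a separated limit) is not established. Second, the perturbation $v\leadsto v+\varepsilon\sum_{a\in\cA}\mathbf{e}_a$ adds mass only on $\cA$-coordinates; if the AG witness $\lambda$ for the tight piece $v^{(\beta_0)}$ assigns positive weight to some $\beta'\in\cB\setminus\cA$ (which can certainly happen before the decomposition has been cleaned up), the perturbation does not create the slack you need, and the argument becomes circular in the very variable you are trying to fix. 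You would need either to first pass to a circuit decomposition (Lemma~\ref{le:support}) and argue carefully about which supports can occur, or replace the iterative exchange by a single global construction. As it stands, the converse direction of your proposal is an outline with a plausible strategy but not a complete argument.
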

In the case $\mathcal{A} \cap \mathcal{B} = \emptyset$, we can shortly write
$\sum\limits_{\alpha\in\cA}c_\alpha \cdot \mathbf{e}_\alpha - \sum\limits_{\beta\in\cB}|d_\beta|\cdot \mathbf{e}_\beta = (c,-|d|)$, where $|d|$ denotes the component-wise absolute value.
If there exists some $\beta \in \mathcal{A} \cap \mathcal{B}$, 
then the corresponding coefficient in 
the SAGE cone $c_\beta-|d_\beta|$ appears only once in the set $\R^{\cA\cup\cB}$.
However, by slight abuse of notation, we also write
$\sum\limits_{\alpha\in\cA}c_\alpha \cdot \mathbf{e}_\alpha
-\sum\limits_{\beta\in\cB}|d_\beta|\cdot \mathbf{e}_\beta$ shortly as $(c,-|d|)$.

\begin{proof}
	If $f=\sum\limits_{\alpha\in\cA}c_\alpha |x|^\alpha + \sum\limits_{\beta\in\cB}d_\beta x^\beta \in \CS(\cA,\cB)$, then~\eqref{eq:decomp} gives a decomposition 
		\begin{align*}
			f=\sum\limits_{\alpha\in\cA}f_\alpha^{\text{even}} + \sum\limits_{\beta\in\cB}f_\beta^{\text{odd}} 
		\end{align*}
		with $f_\alpha^{\text{even}}\in \Pe{\cA\setminus\set{\alpha},\alpha}$ for all $\alpha\in\cA$ and $f_\beta^{\text{odd}}=\sum\limits_{\alpha\in\cA}c_\alpha^{(\beta)}|x|^\alpha+ d_\beta x^\beta\in \Po{\cA, \beta}$ for every $\beta\in\cB$. Defining the functions
		\begin{align*}
		\tilde{f}_\alpha^{\text{even}}& \ = \ f_\alpha^{\text{even}} \text{ for all }\alpha\in\cA\\
		\text{and }	\tilde{f}_\beta^{\text{even}}& \ = \ \sum\limits_{\alpha\in\cA}c_\alpha^{(\beta)}|x|^\alpha- |d_\beta| |x|^\beta = f_\beta^{\text{odd}} - d_\beta x^\beta - |d_\beta| |x|^\beta \text{ for all }\beta\in\cB,
		\end{align*}
symmetry implies $\tilde{f}_\beta^{\text{even}}\in \Pe{\cA,\beta}$ and hence, $\tilde{f}=\sum\limits_{\alpha\in\cA}\tilde{f}_\alpha^{\text{even}} + \sum\limits_{\beta\in\cB}\tilde{f}_\beta^{\text{even}} \in \CS(\cA\cup\cB, \emptyset)$. 
Remark \ref{rem:SONCequals}(1) then shows that the coefficient vector of $\tilde{f}$
is contained in $C_{\mathrm{SAGE}}(\mathcal{A} \cup \mathcal{B})$.
	
The converse direction of the first equation
follows immediately with the substitution in Remark \ref{rem:SONCcone}(2). 

The second equation, which exhibits the convexity of the $\mathcal{S}$-cone,
is an immediate consequence of the first one.
\end{proof}

In our definition of the $\cS$-cone, we exclude sums of non-negative AG functions with support $\cA\cup\cB$ for $\cA\subseteq \R^n,\cB\subseteq\N^n\setminus(2\N)^n$, where the corresponding AG functions have bigger support than $\cA\cup \cB$.
This could happen, for example, if two summands cancel in the sum.
For a better understanding of the problem, we have a look at the following example.

\begin{example}
	Let $\cA:=\{\frac{1}{3},\frac{7}{3}\},\cB:=\{1\}$. Consider the two non-negative AG functions
	\begin{align*}
		& f_1:=|x|^{\frac{1}{3}}+x+x^2,\\
		& f_2:=|x|^{\frac{1}{3}}-x^2+|x|^{\frac{7}{3}},
	\end{align*}
	whose support is not contained in $\cA\cup\cB$. But the sum
	\begin{align*}
		f:=f_1+f_2=2|x|^{\frac{1}{3}}+x+|x|^{\frac{7}{3}}
	\end{align*}
	is itself a non-negative AG function, whose support is contained in $\cA\cup \cB$.
\end{example}
In fact, this restriction is not really a restriction. The following proposition states
that every sum $f$ of non-negative AG functions whose support is bigger than the support of the sum can be decomposed into a sum of non-negative AG functions 
whose supports are contained in the support of $f$.

For the SAGE case, this was already proven in \cite[Theorem 2]{mcw-2018} and for the SONC case this follows from the more detailed result of \cite{wang-nonneg}.

\begin{proposition} \label{pr:nocancel}
	Let $\emptyset \neq\cA\subseteq \R^n, \cB\subseteq \N^n\setminus(2\N)^n$ be finite sets and $f \in \R[\cA, \cB]$.
	If $f\in \CS(\cA',\cB')$ for some $\cA'\supseteq \cA$, $\N^n\setminus(2\N)^n\supseteq \cB' \supseteq \cB$, then $f\in \CS(\cA,\cB)$ as well.
	Equivalently, it holds that
	\[ \CS(\cA, \cB) = \CS(\cA', \cB') \cap \R[\cA, \cB].\]
\end{proposition}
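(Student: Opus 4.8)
The inclusion ``$\subseteq$'' in $\CS(\cA,\cB)=\CS(\cA',\cB')\cap\R[\cA,\cB]$ is immediate from Definition~\ref{de:scone}: a non-negative AG function lying in $\R[\cA,\cB]$ is, read inside $\R[\cA',\cB']$, still an (even, resp.\ odd) AG function --- it merely acquires further admissible exponents, all with coefficient $0$ --- and it is still non-negative; hence every generator of $\CS(\cA,\cB)$ lies in $\CS(\cA',\cB')$, so $\CS(\cA,\cB)\subseteq\CS(\cA',\cB')$, while $\CS(\cA,\cB)\subseteq\R[\cA,\cB]$ holds by definition. All the content sits in the reverse inclusion, and my plan is to transport it to the SAGE setting, where it is already known, by means of Proposition~\ref{prop:SconeSAGE}.

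So let $f=\sum_{\alpha\in\cA}c_\alpha|\xb|^\alpha+\sum_{\beta\in\cB}d_\beta\xb^\beta\in\R[\cA,\cB]$ with $f\in\CS(\cA',\cB')$. First I would observe that, since the functions $\{|\xb|^\alpha:\alpha\in\cA'\}\cup\{\xb^\beta:\beta\in\cB'\}$ are linearly independent (this is the dimension count $\dim\R[\cA',\cB']=|\cA'|+|\cB'|$), the expansion of $f$ in this basis of $\R[\cA',\cB']$ has vanishing coefficient on every exponent outside $\cA\cup\cB$; consequently the SAGE-encoding vector $v:=\sum_{\alpha\in\cA}c_\alpha\mathbf{e}_\alpha-\sum_{\beta\in\cB}|d_\beta|\mathbf{e}_\beta$ of Proposition~\ref{prop:SconeSAGE} (with the abuse of notation of that proposition if $\cA\cap\cB\neq\emptyset$), viewed in $\R^{\cA'\cup\cB'}$, has all coordinates outside $\cA\cup\cB$ equal to $0$. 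Next, Proposition~\ref{prop:SconeSAGE} applied to the pair $(\cA',\cB')$ gives $v\in C_{\mathrm{SAGE}}(\cA'\cup\cB')$. Then I would invoke the no-cancellation theorem for the SAGE cone, \cite[Theorem~2]{mcw-2018} (equivalently \cite{chandrasekaran-shah-2016}), which states exactly that a point of $C_{\mathrm{SAGE}}(\cA'\cup\cB')$ supported within $\cA\cup\cB$ already belongs to $C_{\mathrm{SAGE}}(\cA\cup\cB)$, so $v\in C_{\mathrm{SAGE}}(\cA\cup\cB)$. Finally, Proposition~\ref{prop:SconeSAGE} applied to the pair $(\cA,\cB)$ turns this back into $f\in\CS(\cA,\cB)$. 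The reformulation ``$f\in\CS(\cA',\cB')$ for some larger supports implies $f\in\CS(\cA,\cB)$'' is then just a restatement, since $\CS(\cA,\cB)\subseteq\R[\cA,\cB]$ always holds.

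The one place that needs genuine care --- and the only real obstacle, the rest being a dictionary --- is the bookkeeping in the two applications of Proposition~\ref{prop:SconeSAGE} when $\cA$ and $\cB$ (hence $\cA'$ and $\cB'$) overlap: there a single coordinate of $\R^{\cA'\cup\cB'}$ records the combined quantity $c_\gamma-|d_\gamma|$, and one must verify that the vanishing of the coefficient of $f$ at an exponent $\gamma\notin\cA\cup\cB$ truly forces $v_\gamma=0$ (it does, since for such $\gamma$ neither $|\xb|^\gamma$ nor $\xb^\gamma$ occurs in $f$). Note that the substantive content here --- the reduction of $\cS$-cone membership to SAGE membership via the symmetrization trick used in the proof of Proposition~\ref{prop:SconeSAGE}, together with the SAGE no-cancellation theorem of \cite[Theorem~2]{mcw-2018} --- is exactly what simultaneously recovers the SAGE case of \cite{mcw-2018} and the SONC case of \cite{wang-nonneg}; a self-contained proof in the generality of the $\cS$-cone would just inline these two ingredients, and I would prefer the shorter reduction above.
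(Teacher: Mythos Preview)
Your proof is correct and follows essentially the same route as the paper: both apply Proposition~\ref{prop:SconeSAGE} to pass to the SAGE encoding $(c,-|d|)$, invoke \cite[Theorem~2]{mcw-2018} to reduce the support of the SAGE vector, and then apply Proposition~\ref{prop:SconeSAGE} once more to return to the $\cS$-cone. Your write-up is a bit more explicit about the easy inclusion and the $\cA\cap\cB$ bookkeeping, but the argument is the same.
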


\begin{proof}
By Proposition \ref{prop:SconeSAGE},
\begin{align*}
	\CS(\cA,\cB)= \left\{\sum\limits_{\alpha\in\cA}c_\alpha|x|^\alpha + \sum\limits_{\beta\in\cB}d_\beta x^\beta\in\R[\cA,\cB]: \sum\limits_{\alpha\in\cA}c_\alpha \cdot \mathbf{e}_\alpha - \sum\limits_{\beta\in\cB}|d_\beta|\cdot \mathbf{e}_\beta\in C_{\mathrm{SAGE}}(\cA\cup\cB)\right\},
\end{align*}
with $\mathbf{e}_\alpha$ denoting the unit vector with respect to $\alpha$ for $\alpha\in\cA$, resp. $\cB$. Let $f\in\CS(\cA',\cB')\cap \R[\cA,\cB]$ with coefficient vector $(c,d)$ and hence $(c,-|d|)\in C_{\mathrm{SAGE}}(\cA'\cup\cB')\cap \R^{\cA\cup\cB}$ where the absolute value is component-wise. The already mentioned statement for the SAGE-case (\cite{mcw-2018}, Theorem 2) states that $C_{\mathrm{SAGE}}(\cA\cup\cB)= C_{\mathrm{SAGE}}(\cA'\cup\cB')\cap\R^{\cA\cup\cB}$. Hence, $(c,-|d|)\in C_{\mathrm{SAGE}}(\cA\cup\cB)$ as well as 
\begin{align*}
	f\in & \left\{\sum\limits_{\alpha\in\cA}c_\alpha|x|^\alpha + \sum\limits_{\beta\in\cB}d_\beta x^\beta\in\R[\cA,\cB]: \sum\limits_{\alpha\in\cA}c_\alpha \cdot \mathbf{e}_\alpha - \sum\limits_{\beta\in\cB}|d_\beta|\cdot \mathbf{e}_\beta\in C_{\mathrm{SAGE}}(\cA\cup\cB)\right\}\\
	= & \ \CS(\cA, \cB),
\end{align*}
again by Proposition \ref{prop:SconeSAGE}. The other inclusion is obvious.
\end{proof}

Our next result characterizes non-negative AG functions.
It is a slight generalization of \cite[Lemma~2.2]{chandrasekaran-shah-rel-entropy} to the setting of AG functions.
The following notation is useful to state the theorem:

\begin{notation}
	For a non-empty finite set $\cA \subseteq \R^n$ and $\beta \in \R^n$, let $\Lambda(\cA,\beta)$ be the polytope
	\begin{equation}\label{eq:capitallambda}
		\Lambda(\cA, \beta) := \left\{ \lambda \in \R^\cA_+ \with \sum_{\alpha\in \cA} \lambda_\alpha \alpha = \beta,\ \sum_{\alpha\in\cA}\lambda_{\alpha} = 1\right\}.
	\end{equation}
	Note that $\Lambda(\cA, \beta) \neq \emptyset$ if and only if $\beta$ is contained in the convex hull of $\cA$.
	In the special case that $\cA$ is affinely independent, $\Lambda(\cA, \beta)$ consists of a single element, which we denote by $\lambda(\cA, \beta)$.
\end{notation}

Let $\cA \subseteq \R^n$ be a non-empty finite set.
We denote by $D:\R_{>0}^\cA \times \R_{>0}^\cA \to \R$,
\[
D(\nu, \gamma) \ = \ \sum_{\alpha \in \cA} \nu_\alpha \ln \left( \frac{\nu_\alpha}{\gamma_\alpha} \right), \quad \nu, \gamma \in \R_{>0}^\cA
\]
the \emph{relative entropy function}. It can be extended to $\R_{+}^\cA \times \R_{+}^\cA \to \R \cup \set{\infty}$ using the
usual conventions $0 \cdot \ln \frac{0}{y} = 0$ for $y \geq 0$ and $y \cdot \ln \frac{y}{0} = \infty$ for $y > 0$.

\begin{theorem}\label{thm:oddImplication}
	Let $\cA\subseteq \R^n$ be a non-empty finite set and $f$
	be an AG function of the form
	\[
	f = \sum_{\alpha\in \cA} c_\alpha |\xb|^\alpha + 
	\begin{cases}
	d |\xb|^\beta
	\text{ with } \beta \in \R^n \setminus\cA & \text{ if $f$ is even,} \\
	d \xb^\beta 
	\text{ with } \beta \in \N^n \setminus(2\N)^n & 
	\text{ if $f$ is odd,}
	\end{cases}
	\]
        where $c_{\alpha} \ge 0$ for all 
        $\alpha \in \mathcal{A}$ and $d \in \R$.
	Then the following statements are equivalent:
	\begin{enumerate}
		\item $f(\xb) \geq 0$ for all $\xb \in \R^n$.
		\item There exists a $\nu\in\R_{+}^{\cA}$ such that
		$\sum_{\alpha\in \cA} \nu_{\alpha}\alpha=(\sum_{\alpha\in \cA} \nu_{\alpha})\beta$
		and 
		\[
		D(\nu, e\cdot c) \le
		\begin{cases}
		d & \text{ if $f$ even,} \\
		-|d| & \text{ if $f$ odd.}
		\end{cases}
		\]
		\item There exists a $\lambda \in \Lambda(\cA, \beta)$ such that
		\[
		\prod_{\alpha \in \cA} \left(\frac{c_\alpha}{\lambda_\alpha}\right)^{\lambda_\alpha} \ge 
		\begin{cases}
		-d & \text{ if $f$ even,} \\
		|d| & \text{ if $f$ odd.}
		\end{cases}
		\]
	\end{enumerate}
\end{theorem}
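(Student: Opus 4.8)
The plan is to prove the chain $(3)\Rightarrow(1)\Rightarrow(2)\Rightarrow(3)$, reducing the odd case to the even case by the elementary bound $|\xb^\beta|\le|\xb|^\beta$, and then for the even case working through the substitution $|x_i|=\exp(y_i)$ that turns AG functions into AGE functions (signomials), where the result is essentially \cite[Lemma~2.2]{chandrasekaran-shah-rel-entropy}.

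First I would dispose of the trivial degenerate case: if $\beta\notin\conv(\cA)$, then $\Lambda(\cA,\beta)=\emptyset$ and also (by the normalization) there is no admissible $\nu$, so (2) and (3) are vacuously false; I claim (1) is false too, since scaling $\xb$ appropriately along a direction witnessing $\beta\notin\conv(\cA)$ drives $f$ to $-\infty$ unless $d=0$, and if $d=0$ then $f\ge 0$ trivially while (2),(3) hold with $\nu=0$ — so actually one must be slightly careful and handle $d=0$ separately (then all three statements hold). Assuming $d\ne 0$ and $\beta\in\conv(\cA)$, the three statements are about genuine content.

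For the core equivalence, I would argue as follows. \emph{$(3)\Rightarrow(1)$:} Given $\lambda\in\Lambda(\cA,\beta)$ with $\prod_\alpha(c_\alpha/\lambda_\alpha)^{\lambda_\alpha}\ge|d|$ (writing $|d|$ uniformly, with $-d$ in the even case), apply the weighted AM--GM inequality to the points $c_\alpha|\xb|^\alpha/\lambda_\alpha$ with weights $\lambda_\alpha$: this gives $\sum_\alpha c_\alpha|\xb|^\alpha\ge\prod_\alpha(c_\alpha/\lambda_\alpha)^{\lambda_\alpha}\cdot\prod_\alpha|\xb|^{\lambda_\alpha\alpha}=\prod_\alpha(c_\alpha/\lambda_\alpha)^{\lambda_\alpha}\cdot|\xb|^\beta\ge|d|\,|\xb|^\beta\ge -d\,\xb^\beta$ in the odd case (using $\xb^\beta\ge-|\xb|^\beta$ and $|d||\xb|^\beta\ge|d||\xb^\beta|$) and $\ge -d|\xb|^\beta$ in the even case; hence $f\ge 0$. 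One should check the boundary behavior when some $x_j=0$ with a negative exponent, where $f=\infty\ge0$ trivially. \emph{$(2)\Leftrightarrow(3)$:} This is the dictionary between the relative-entropy formulation and the geometric-mean formulation. Given $\nu\in\R_+^\cA$ with $\sum\nu_\alpha\alpha=(\sum\nu_\alpha)\beta$, put $N=\sum_\alpha\nu_\alpha$; if $N=0$ this forces $d=0$ (otherwise (2) fails), handled above, so assume $N>0$ and set $\lambda=\nu/N\in\Lambda(\cA,\beta)$. A direct computation shows $D(\nu,e\cdot c)=N\big(\ln N+D(\lambda,e\cdot c)\big)$ and, optimizing over $N>0$ (the minimum of $N(\ln N+K)$ is $-e^{-1-K}$ at $N=e^{-1-K}$, but here $N$ is not free — wait, $N$ \emph{is} free, since (2) only constrains the direction of $\nu$), one finds that $\min_{N>0}D(\nu,e\cdot c)=-\prod_\alpha(c_\alpha/\lambda_\alpha)^{\lambda_\alpha}$; thus the inequality $D(\nu,e\cdot c)\le -|d|$ is solvable in $\nu$ iff $\prod_\alpha(c_\alpha/\lambda_\alpha)^{\lambda_\alpha}\ge|d|$ for some $\lambda\in\Lambda(\cA,\beta)$, and the even case ($\le d$ vs.\ $\ge -d$) is identical. \emph{$(1)\Rightarrow(3)$:} Here I would pass to signomials via $|x_i|=\exp(y_i)$. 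In the even case, $f\ge0$ on $\R^n$ is equivalent to $\sum_\alpha c_\alpha\exp(\alpha^T\yb)+d\exp(\beta^T\yb)\ge0$ on $\R^n$, which is exactly the AGE condition treated in \cite[Lemma~2.2]{chandrasekaran-shah-rel-entropy}; invoking that lemma gives (3). In the odd case, I would argue that $f\ge 0$ on all of $\R^n$ forces, by choosing the signs of the coordinates of $\xb$ to make $\xb^\beta<0$ of maximal absolute value (possible since $\beta\notin(2\N)^n$, so some $\beta_j$ is odd and flipping $\operatorname{sgn}(x_j)$ flips the sign of $\xb^\beta$), that $\sum_\alpha c_\alpha|\xb|^\alpha-|d|\,|\xb|^\beta\ge 0$ for all $\xb$, i.e.\ the \emph{even} AG function with coefficient $-|d|$ on $|\xb|^\beta$ is non-negative; then apply the even case already established.

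The main obstacle I anticipate is the $(1)\Rightarrow(3)$ direction, specifically making the reduction from the odd AG function to the associated even one fully rigorous: one must verify that the sign of each odd-exponent monomial can genuinely be made negative at every point of $\R_{>0}^n$ simultaneously with prescribed absolute values (this is clean because $\beta\in\N^n\setminus(2\N)^n$), and one must handle the case $\beta\in\cA$ allowed in the odd setting (where $|\xb|^\beta$ already appears among the $c_\alpha$-terms), as well as points on the coordinate hyperplanes. The relative-entropy/geometric-mean bookkeeping in $(2)\Leftrightarrow(3)$ is routine convexity but requires care with the extended-real conventions $0\cdot\ln(0/y)=0$ and $y\cdot\ln(y/0)=\infty$, particularly when some $c_\alpha=0$: then (3) forces $\lambda_\alpha=0$, and one checks the corresponding term drops out of both sides consistently.
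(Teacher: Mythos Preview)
Your proposal is correct and follows essentially the same route as the paper: both prove $(3)\Rightarrow(1)$ by the weighted AM--GM inequality, both invoke the Chandrasekaran--Shah AGE characterization for the even (signomial) case, and both reduce the odd case to the even one via the sign-flip $d\mapsto-|d|$ (the paper packages this reduction as Proposition~\ref{prop:SconeSAGE}, you do it directly by choosing signs of the coordinates). The only difference is organizational---the paper obtains $(1)\Leftrightarrow(2)$ by citation and then normalizes $\nu$ for $(2)\Rightarrow(3)$, whereas you make the scale-optimization in $(2)\Leftrightarrow(3)$ explicit and self-contained---and your edge-case worry about $\beta\notin\conv(\cA)$ with $d\ge 0$ is a lacuna shared by the paper's own proof (which divides by $\sum_\alpha\nu_\alpha$ without checking positivity), so treat it as an implicit standing assumption rather than a defect of your argument.
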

A vector $\lambda \in  \Lambda(\cA, \beta)$ as in this theorem is called 
an \emph{AG witness}.

\begin{proof}[Proof of Theorem \ref{thm:oddImplication}]
	Before we prove this theorem, observe that for the AGE cone $C_{\text{AGE}}(\cA,\beta)$, defined in Remark \ref{rem:SONCequals}, we have  $(c,d)\in C_{\text{AGE}}(\cA,\beta)$ if and only if there exists $\nu\in\R_+^\cA$ such that $\sum_{\alpha\in \cA} \nu_{\alpha}\alpha=(\sum_{\alpha\in \cA} \nu_{\alpha})\beta$ and 
	\begin{align*}
		D(\nu,e\cdot c) \le d
	\end{align*}
	(compare \cite{chandrasekaran-shah-2016}, Section $2.1$). If $f$ is an even AG-function, this is exactly the equivalence $(1)\Leftrightarrow (2)$ due to Remark \ref{rem:SONCequals}. If $f$ is an odd AG-function, the equivalence follows from Proposition \ref{prop:SconeSAGE} and the mentioned observation.
	
	For the implication (2) $\implies$ (3), set 
	$\lambda := (\sum_{\alpha\in \cA} \nu_{\alpha})^{-1}\nu$.
	It is clear from the properties of $\nu$ that $\lambda \in \Lambda(\cA, \beta)$.
	The discussion in \cite[p.~1151]{chandrasekaran-shah-2016} shows that 
	\[ \prod_{\alpha \in \cA}\left(\frac{c_\alpha}{\lambda_\alpha}\right)^{\lambda_\alpha} \geq - D(\nu, e\cdot c) \]
	and thus this $\lambda$ has the desired properties.
	The implication (3) $\implies$ (1) is a direct consequence of the weighted
	arithmetic-geometric mean inequality:
	\begin{align*}
		\sum\limits_{\alpha\in\cA} c_\alpha|x|^\alpha
		\overset{\text{AM/GM-inequality}}{\ge } \prod\limits_{\alpha\in\cA} \left(\frac{c_\alpha}{\lambda_\alpha}|x|^\alpha\right)^{\lambda_\alpha}= \prod\limits_{\alpha\in\cA} \left(\frac{c_\alpha}{\lambda_\alpha}\right)^{\lambda_\alpha}|x|^\beta.
	\end{align*}
	Using $(3)$, we obtain
	\begin{align*}
\sum\limits_{\alpha\in\cA} c_\alpha|x|^\alpha	+ \begin{cases}
		d|x|^\beta\\ dx^\beta
	\end{cases}\ge |x|^\beta \begin{cases}
	-d+d\\ |d|-\sgn(x)\cdot d
	\end{cases}\ge 0.
\end{align*}	
	 As we  already know that $(1)\Leftrightarrow (2)$, we obtain the desired statement.
\end{proof}

\begin{example}\label{ex:circuitfunc}
	Let $\cA = \cB = \set{1} \subseteq \N$.
	A typical AG function with this support is
	\[ g(x) = c_1 |x| + c_2 x. \]
	Since the equality condition in statement $(2)$ of Theorem~\ref{thm:oddImplication}
	is trivially satisfied, we have $g(x) \geq 0$ for all $x \in \R$ if and only if there exists a $\nu \in \R_{+}$ with
	\begin{equation} 
	  \label{eq:xx}
	    \nu \ln\left(\frac{\nu}{e c_1}\right) \leq -|c_2|. 
	\end{equation}
	If $\nu\ge 0$, the latter condition can be simplified to $|c_2| \leq c_1$. For the case $\nu=0$, this is clear from our setting $0 \cdot \ln 0 = 0$, and to see it for
	$\nu>0$, rewrite~\eqref{eq:xx} as 
	\begin{align*}
	c_1\left(\frac{\nu}{c_1}\right)\ln\left(\frac{\nu}{e c_1}\right)\le -|c_2|.
	\end{align*}
	Since the function $x\ln\left(\frac{x}{e}\right)$ attains its minimum at $x=1$ (which means $x\ln\left(\frac{x}{e}\right)\ge-1$), we obtain the claimed result. It is in particular the one of statement (3) in Theorem~\ref{thm:oddImplication}.
\end{example}

For later use, we note that our cones of interest are closed:
\begin{proposition}\label{prop:closed}
	The cones $\Po{\cA, \beta}, \Pe{\cA, \beta}$ and $\CS(\cA, \cB)$ are closed pointed convex cones.
\end{proposition}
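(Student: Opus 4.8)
The plan is to show three things: convexity, pointedness, and closedness for each of the three cones, noting that $\CS(\cA,\cB)$ reduces to the other two as building blocks via \eqref{eq:decomp}, and that pointedness of a sum of pointed cones lying in a common halfspace follows once we exhibit a suitable separating functional.

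\textbf{Convexity.} For $\Pe{\cA,\beta}$ and $\Po{\cA,\beta}$ this is immediate from the defining description: the set of coefficient vectors $(c,d)$ with $c\in\R_+^\cA$, $d\in\R$, and $f(\xb)\ge 0$ for all $\xb$ is cut out by the linear constraints $c_\alpha\ge 0$ together with the infinitely many linear inequalities $\sum_\alpha c_\alpha|\xb|^\alpha \pm d|\xb|^\beta\ge 0$ (one for each fixed $\xb$, using $\xb^\beta=\pm|\xb|^\beta$ in the odd case), hence it is an intersection of halfspaces. Since these sets are also closed under positive scaling, they are convex cones. For $\CS(\cA,\cB)$, convexity is exactly the content of the second equation in Proposition~\ref{prop:SconeSAGE}, or alternatively follows since \eqref{eq:decomp} writes it as a Minkowski sum of convex cones.

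\textbf{Closedness and pointedness.} I would first handle $\Pe{\cA,\beta}$ and $\Po{\cA,\beta}$, then deduce the result for $\CS(\cA,\cB)$. Closedness of $\Pe{\cA,\beta}$ and $\Po{\cA,\beta}$: these are intersections of closed halfspaces in the finite-dimensional coefficient space $\R^\cA\times\R$ (each evaluation $f\mapsto f(\xb_0)$ for fixed $\xb_0\in\R^n$, and each sign constraint $c_\alpha\ge 0$, is a closed linear condition), hence closed. For pointedness, I need to rule out a line in the cone; since $c\in\R_+^\cA$ forces each $c_\alpha\ge0$ and a line through the origin would force some $c_\alpha<0$ unless all $c_\alpha=0$, the only candidate line is $c=0$, $d$ arbitrary, but $d\,\xb^\beta$ (resp. $d|\xb|^\beta$) is non-negative for all $\xb$ only when $d\ge0$ (in the odd case $\xb^\beta$ changes sign since $\beta\notin(2\N)^n$; in the even case pick $\xb$ with $|\xb|^\beta>0$), so $d\ge0$ and there is no line. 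For $\CS(\cA,\cB)$: closedness should follow from Proposition~\ref{prop:SconeSAGE}, which identifies $\CS(\cA,\cB)$ with the preimage of the closed cone $C_{\mathrm{SAGE}}(\cA\cup\cB)$ under the continuous (indeed linear) map $(c,d)\mapsto (c,-|d|)$ on the slice $\R^{\cA\cup\cB}$ — here I should remark that $C_{\mathrm{SAGE}}$ is known to be closed (it is a finite sum of the closed AGE cones $C_{\mathrm{AGE}}(\cA',\beta)$, each of which is closed by the halfspace argument above applied in the signomial picture), and that although $(c,d)\mapsto-|d|$ is not linear it is continuous, so the preimage of a closed set is closed. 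Pointedness of $\CS(\cA,\cB)$: use \eqref{eq:decomp} and the fact that a Minkowski sum of finitely many pointed convex cones is pointed provided they share a common strictly separating linear functional; here the functional $\ell(f)=\int_{[-1,1]^n} f$ (or evaluation-type averaging) is non-negative on every non-negative AG function and vanishes only at $f\equiv 0$ among them, which yields pointedness of the sum.

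\textbf{Main obstacle.} The routine parts are convexity and the halfspace-closedness of the even/odd cones. The one genuinely delicate point is closedness of $\CS(\cA,\cB)$ via Proposition~\ref{prop:SconeSAGE}, because it rests on closedness of the SAGE cone $C_{\mathrm{SAGE}}(\cA\cup\cB)$; I would either cite this as known (it follows from \cite{chandrasekaran-shah-2016} / \cite{mcw-2018}, or re-derive it as above as a finite sum of halfspace-closed AGE cones, sums of closed cones being closed when they are pointed and positively spanning a salient cone) and then invoke continuity of the map $(c,d)\mapsto(c,-|d|)$. The pointedness argument also requires care to produce a single linear functional that is simultaneously positive on all the summand cones; spelling out that $\int_{[-1,1]^n}|\xb|^\alpha\,d\xb>0$ for every $\alpha\in\R^n$ (the integral converges and is strictly positive, with the convention that it is $+\infty$ if $\alpha$ has a coordinate $\le -1$, in which case the corresponding AG function is not in the cone anyway) and $\int_{[-1,1]^n}\xb^\beta\,d\xb=0$ for $\beta\notin(2\N)^n$ closes this gap.
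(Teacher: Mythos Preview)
Your argument is largely correct in outline, but it is considerably more elaborate than the paper's, and one step has a genuine gap.

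\textbf{What the paper does.} The paper's proof is three sentences. Pointedness of \emph{all three} cones is handled at once by the observation that every element of each cone is a non-negative function on $\R^n$ (for $\CS(\cA,\cB)$, because it is a sum of non-negative AG functions); hence if $f$ and $-f$ both lie in the cone, then $f\equiv 0$. Closedness of $\Pe{\cA,\beta}$ and $\Po{\cA,\beta}$ is your halfspace argument. Closedness of $\CS(\cA,\cB)$ then follows because a finite Minkowski sum $\sum_i C_i$ of closed convex cones is closed whenever $\sum_i x_i = 0$ with $x_i\in C_i$ forces all $x_i=0$; here each summand consists of non-negative functions, so this separation condition is immediate. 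No appeal to Proposition~\ref{prop:SconeSAGE} or to the SAGE literature is needed.

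\textbf{Where your route differs and where it breaks.} Your closedness argument for $\CS(\cA,\cB)$ via the continuous map $(c,d)\mapsto(c,-|d|)$ and closedness of $C_{\mathrm{SAGE}}$ is valid, but it is a detour: to show $C_{\mathrm{SAGE}}$ is closed you end up invoking exactly the ``sum of closed cones with no nontrivial cancellation'' fact that would prove closedness of $\CS(\cA,\cB)$ directly from \eqref{eq:decomp}. More importantly, your pointedness argument for $\CS(\cA,\cB)$ via the functional $\ell(f)=\int_{[-1,1]^n} f$ does not work as stated. The set $\cA$ is allowed to lie anywhere in $\R^n$, so $|\xb|^\alpha$ with some $\alpha_i\le -1$ is a perfectly legitimate non-negative AG function (taking the value $+\infty$ on a coordinate hyperplane, per the paper's convention); it is in the cone, yet your integral does not define a finite linear functional on it. Your parenthetical escape clause (``in which case the corresponding AG function is not in the cone anyway'') is therefore false. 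The fix is the paper's one-line argument: $f\in\CS(\cA,\cB)$ implies $f\ge 0$ pointwise, so $f,-f\in\CS(\cA,\cB)$ forces $f=0$. No auxiliary functional is needed.
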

\begin{proof}
	It is clear that all three cones are pointed, since the only non-negative function $f$ where $-f$ is non-negative as well is the zero function.
	The cones $\Po{\cA, \beta}$ and $\Pe{\cA, \beta}$ are defined as (infinite) intersections of closed halfspaces, and thus they are closed.
	Finally, since finite sums of closed pointed convex cones are again closed, the cone $\CS(\cA, \cB)$ is closed as well.
\end{proof}

\section{Circuits and the dual of the \texorpdfstring{$\cS$}{S}-cone\label{se:circuits-dual}}
In this section, we introduce circuit functions and
provide several characterizations of the dual $\cS$-cone
(see Theorem~\ref{thm:equiv}).

We can identify the dual space of $\R[\cA,\cB]$ with $\R^{(\cA,\cB)} := \R^{\cA} \times \R^{\cB}$.
For $f \in \R[\cA,\cB]$ with coefficients $(c_\alpha)_{\alpha\in\cA},(d_\beta)_{\beta \in \cB}$ and an element $(v,w) \in \R^{(\cA,\cB)}$, we consider the natural pairing
\begin{equation}\label{eq:pairing}
	(v,w)(f) \ = \ \sum\limits_{\alpha\in \cA}v_\alpha c_\alpha +\sum\limits_{\beta \in \cB} w_\beta d_\beta \, .
\end{equation}
Using this notation, the dual cone $\CS({\cA,\cB})^*$ is defined as
\[
	\CS({\cA,\cB})^* \ = \ \{ (v,w) \in \R^{(\cA,\cB)} \mid (v,w)(f) \ge 0 \text{ for all } f \in \CS({\cA,\cB}) \}.
\]

Now we consider the representation of AG functions in terms
of circuit functions. Here, \emph{$\relint$} and \emph{$\conv$} denote the 
relative interior and the convex hull of a set.

\begin{definition}\label{de:circuits}
	A \emph{circuit} is a pair $(A, \beta)$, where $A \subseteq \R^n$ is affinely independent and $\beta \in \relint\conv(A)$.
	For finite sets $\cA, \cB \subseteq \R^n$, let
	\begin{align*} 
		I(\cA,\cB) &:= \set{ (A,\beta) \text{ circuit} \with A\subseteq \cA, \beta \in \cB}
	\end{align*}
	denote the set of all circuits on $\cA, \cB$.
	In particular, for $\cA\subseteq \R^n,\cB\subseteq\N^n\setminus(2\N)^n$ we call $I(\cA,\cA)$ the set of all \emph{even} circuits and $I(\cA,\cB)$ the set of all \emph{odd} circuits.
\end{definition}

\begin{definition}
	Let $(A, \beta)$ be a circuit.
	\begin{enumerate}
	\item An \emph{even circuit function} supported on $(A, \beta)$ is an AG function of the form
	\[ f = \sum_{\alpha \in A} c_\alpha |\xb|^\alpha + d |\xb|^\beta\]
	with $c_\alpha >0$ for all $\alpha\in A$ and $d\in\R$.
	\item For $\beta \in \N^n \setminus (2\N)^n$, an \emph{odd circuit function} supported on $(A, \beta)$ is an AG function of the form
		\[ f = \sum_{\alpha \in A} c_\alpha |\xb|^\alpha + d \xb^\beta\]
		with $c_\alpha >0$ for all $\alpha\in A$ and $d\in\R$.
	\end{enumerate}
	We call $\beta$ the \emph{inner} exponent of $f$ and the other exponents are the \emph{outer} exponents.
\end{definition}

\begin{remark}\label{re:circuits1}
		(1) In case of a circuit, the vector $\lambda \in \Lambda(\cA, \beta)$ in Theorem \ref{thm:oddImplication} is unique, and thus the non-negativity of $f$ can be expressed in terms of the \emph{circuit number}
		\begin{equation}\label{eq:circuitnumber}
			\Theta_f =\prod\limits_{\lambda_\alpha\neq0}\left(\frac{c_\alpha}{\lambda_\alpha}\right)^{\lambda_\alpha}, 
		\end{equation}
		which was introduced in \cite{iliman-dewolff-resmathsci}.

		(2) Iliman and de Wolff also introduced the notion of 
		\emph{circuit polynomials} in \cite{iliman-dewolff-resmathsci}. Every  circuit polynomial is an even circuit function if the inner exponent is even and an odd circuit function if the inner exponent is odd. With this, circuit polynomials form a special case of circuit functions.
\end{remark}

Next, we introduce \emph{reduced} circuits, which will be used in
Section~\ref{sec:extremerays} to determine the extreme rays of the $\cS$-cone. 
\begin{definition}\label{de:reducedcircuit}
	For a circuit $(A,\beta)$ let 
	\begin{align*}
	\re(A,\beta) &:=|\left(\conv(A) \setminus (A \cup\set{\beta})\right)\cap \cA|\ \text{ and}\\
	\ro(A,\beta) &:=|\left(\conv(A) \setminus A\right) \cap \cA|.
	\end{align*}
	An even circuit $(A, \beta)$ is called \emph{reduced} if $\re(A,\beta)=0$ and an odd circuit $(A, \beta)$ is called \emph{reduced} if $\ro(A,\beta)=0$.
\end{definition}

In other words, reduced circuits contain no elements of $\cA$ in their convex hull except those which are trivially there.
Note that for $\beta \in \cA \cap \cB$, it is possible that a circuit is reduced as an even circuit, but not reduced as an odd circuit.
See Example~\ref{ex:extreme1} below.

We can now provide the following characterization of the dual $\cS$-cone $\CS(\cA,\cB)^*$.
Here, recall the definition of $\Lambda(\cA, \beta)$ from~\eqref{eq:capitallambda}
and that $\lambda(\cA, \beta)$ denotes the single element of $\Lambda(\cA, \beta)$ in the case of a circuit.
We use the convention that $0\ln(0) = 0$ and $\ln(0) = -\infty$.

\begin{theorem}\label{thm:equiv}
	Let $\emptyset \neq \cA \subseteq \R^n$ and $\cB \subseteq \N^n \setminus(2\N)^n$ be finite sets and let $(v,w) \in \R^{(\cA, \cB)}$.
	\begin{enumerate}[(1)]
		\item If $(v,w) \in \CS(\cA, \cB)^*$, then $v_\alpha \geq 0$ for all $\alpha \in \cA$.
		\item If the condition of part (1) is satisfied, then the following are equivalent:
		\begin{enumerate}[(a)]
			\item $(v,w)$ lies in the dual cone $\CS(\cA, \cB)^*$.
			\item For all $\beta \in \cA$ (respectively $\beta \in \cB$) and all $\lambda \in \Lambda(\cA, \beta)$, it holds that
			\[ \ln|v_\beta| \leq \sum_{\alpha\in \cA} \lambda_{\alpha} \ln(v_\alpha) \quad
			 (\text{respectively } \ln|w_\beta| \leq \sum_{\alpha\in \cA} \lambda_{\alpha} \ln(v_\alpha)). \]
			\item For every even circuit $(A, \beta) \in I(\cA,  \cA)$
			(respectively odd circuit $(A, \beta) \in I(\cA, \cB)$)
			 and $\lambda = \lambda(A, \beta)$, it holds that
			\[ \ln|v_\beta| \leq \sum_{\alpha\in A} \lambda_{\alpha} \ln(v_\alpha)  \quad
			(\text{respectively } \ln|w_\beta| \leq \sum_{\alpha\in A} \lambda_{\alpha} \ln(v_\alpha)). \]
			\item For every reduced even circuit $(A, \beta) \in I(\cA,  \cA)$ (respectively 
			reduced odd circuit $(A, \beta) \in I(\cA, \cB)$) 
			and $\lambda = \lambda(A, \beta)$, it holds that
			\[ \ln|v_\beta| \leq \sum_{\alpha\in A} \lambda_{\alpha} \ln(v_\alpha) \quad
			(\text{respectively }
			 \ln|w_\beta| \leq \sum_{\alpha\in A} \lambda_{\alpha} \ln(v_\alpha)). \]
		\end{enumerate}
	\end{enumerate}
\end{theorem}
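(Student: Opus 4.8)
The plan is to prove part (1) directly and then, under the standing assumption $v\geq 0$ from part (1), to run the cycle of equivalences \textrm{(a)}$\Leftrightarrow$\textrm{(b)}$\Leftrightarrow$\textrm{(c)}$\Leftrightarrow$\textrm{(d)}.

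\emph{Part (1) and a reduction.} Each $|\xb|^\alpha$ is a non-negative even AG function lying in $\CS(\cA,\cB)$, so pairing gives $v_\alpha=(v,w)(|\xb|^\alpha)\geq 0$. For the rest I use the decomposition \eqref{eq:decomp}: $\CS(\cA,\cB)$ is a finite sum of the cones $\Pe{\cA\setminus\{\alpha\},\alpha}$ and $\Po{\cA,\beta}$, so its dual is the intersection of their duals inside $\R^{(\cA,\cB)}$. Hence $(v,w)\in\CS(\cA,\cB)^{\ast}$ if and only if $(v,w)(f)\geq 0$ for \emph{every single} non-negative AG function $f\in\R[\cA,\cB]$.

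\emph{(a) $\Leftrightarrow$ (b).} Fix $\beta$ and an AG function $f$ with outer exponents in $\cA$ (resp.\ $\cA\setminus\{\beta\}$), inner exponent $\beta$, non-negative, with non-negative outer coefficients $c$ and inner coefficient $d$. By Theorem~\ref{thm:oddImplication}(3), non-negativity of $f$ means there is an AG witness $\lambda$ with $\prod_\alpha(c_\alpha/\lambda_\alpha)^{\lambda_\alpha}\geq -d$ (even) resp.\ $\geq|d|$ (odd); thus, with $c\geq 0$ fixed, $d$ ranges over the interval of radius $\Theta(c):=\max_{\lambda\in\Lambda(\cdot,\beta)}\prod_\alpha(c_\alpha/\lambda_\alpha)^{\lambda_\alpha}$. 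So $(v,w)(f)\geq 0$ for all such $f$ is equivalent to
\[
\sum_{\alpha}v_\alpha c_\alpha\ \geq\ |v_\beta|\ (\text{resp. }|w_\beta|)\cdot\prod_{\alpha}\Bigl(\tfrac{c_\alpha}{\lambda_\alpha}\Bigr)^{\lambda_\alpha}
\qquad\text{for all }c\geq 0\text{ and all }\lambda\in\Lambda(\cdot,\beta).
\]
The weighted arithmetic--geometric mean inequality bounds the left side below by $\bigl(\prod_\alpha v_\alpha^{\lambda_\alpha}\bigr)\prod_\alpha(c_\alpha/\lambda_\alpha)^{\lambda_\alpha}$, which gives ``$\Leftarrow$''; the substitution $c_\alpha=\lambda_\alpha/v_\alpha$ gives ``$\Rightarrow$'' (degenerate cases $v_\alpha=0$ or $v_\beta=0$ are handled by the stated conventions). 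This is exactly $\ln|v_\beta|\leq\sum_\alpha\lambda_\alpha\ln v_\alpha$ (resp.\ $\ln|w_\beta|\leq\dots$), and one checks that ranging $\lambda$ over $\Lambda(\cA\setminus\{\beta\},\beta)$ or over $\Lambda(\cA,\beta)$ yields the same family of inequalities. Intersecting over all $\beta$ gives (a) $\Leftrightarrow$ (b).

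\emph{(b) $\Leftrightarrow$ (c) and (c) $\Leftrightarrow$ (d).} For fixed $v\geq 0$ and $\beta$, the map $\Lambda(\cA,\beta)\ni\lambda\mapsto\sum_{\alpha\in\cA}\lambda_\alpha\ln v_\alpha$ is affine-linear (with values in $\R\cup\{-\infty\}$), so its minimum over the polytope $\Lambda(\cA,\beta)$ is attained at a vertex; the vertices are precisely the $\lambda(A,\beta)$ for circuits $(A,\beta)$ with $A\subseteq\cA$ (a vertex has affinely independent support, automatically with $\beta$ in the relative interior of its hull). This gives (b) $\Leftrightarrow$ (c). For (c) $\Leftrightarrow$ (d) the implication ``$\Rightarrow$'' is trivial, and for ``$\Leftarrow$'' I argue by induction on $\re(A,\beta)$ (even) resp.\ $\ro(A,\beta)$ (odd): if it is $0$ the circuit is reduced and (d) applies; otherwise one takes a point $\alpha_0\in\cA$ lying in $\conv(A)$ but not trivially so, writes $\alpha_0$ as the convex combination of the vertices $A_0$ of the smallest face of $\conv(A)$ containing it, substitutes this into $\beta=\sum_{\alpha\in A}\lambda(A,\beta)_\alpha\alpha$, and moves along the resulting one-parameter family in $\Lambda(A\cup\{\alpha_0\},\beta)$ until a coordinate vanishes; this yields a circuit $(A'',\beta)$ with $\conv(A'')\subsetneq\conv(A)$ containing strictly fewer points of $\cA$ (the eliminated vertex leaves the hull), and two applications of the induction hypothesis (to $(A'',\beta)$ and to $(A_0,\alpha_0)$) telescope exactly, by the choice of step length, into the inequality for $(A,\beta)$.

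\emph{Main obstacle.} The delicate point is the case in which $\alpha_0$ lies in the relative interior of $\conv(A)$: then $(\conv(A)\setminus(A\cup\{\alpha_0\}))\cap\cA$ and $(\conv(A)\setminus(A\cup\{\beta\}))\cap\cA$ have the same cardinality, so the auxiliary circuit $(A,\alpha_0)$ is \emph{not} smaller in the induction parameter and the induction cannot be closed one circuit at a time. Resolving this requires treating all circuits supported on the simplex $\conv(A)$ with an interior inner exponent \emph{simultaneously} (using a star subdivision from each interior point and the fact that the eliminated vertex always leaves the subdivided simplex), which amounts to solving the resulting linear system of reduced-circuit estimates; verifying that this system is non-degenerate in the right direction (equivalently, that the composed barycentric relations are contracting) is the crux of the whole argument.
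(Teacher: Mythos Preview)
Your arguments for part~(1), for (a)\,$\Leftrightarrow$\,(b), and for (b)\,$\Leftrightarrow$\,(c) are correct and essentially coincide with the paper's: you reprove Lemma~\ref{lem:dualcone} in compressed form (via Theorem~\ref{thm:oddImplication} and AM--GM) and then use the vertex characterization of $\Lambda(\cA,\beta)$ exactly as the paper does in Lemma~\ref{le:support}.

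The genuine gap is in (d)\,$\Rightarrow$\,(c). You have correctly diagnosed the obstacle in your own scheme --- when the extra point $\alpha_0$ lies in $\relint\conv(A)$, the auxiliary circuit $(A,\alpha_0)$ has $\re(A,\alpha_0)=\re(A,\beta)$, so the induction does not close --- but your proposed workaround (``treat all interior circuits simultaneously and solve the resulting linear system'') is not carried out, and you yourself flag its non-degeneracy as ``the crux of the whole argument''. This is not a proof.

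The paper avoids the obstacle entirely with a single extra idea: make the construction \emph{symmetric in $\beta$ and $\beta'$}. With $\lambda=\lambda(A,\beta)$ and $\lambda'=\lambda(A,\beta')$, it sets $\widetilde\lambda=\lambda-\tau\lambda'$ (your $(A'',\beta)$) \emph{and also} $\widetilde\lambda'=\lambda'-\tau'\lambda$, where $\tau,\tau'\in[0,1)$ are maximal. The second vector defines a circuit $(A_2,\beta')$ with $A_2=\supp(\widetilde\lambda')\cup\{\beta\}$, which uses $\beta$ itself as an outer exponent and drops a vertex of $A$; since that dropped vertex is extreme in $\conv(A)$ and not in $A_2$, it cannot lie in $\conv(A_2)$, so $\re(A_2,\beta')<\re(A,\beta)$ even when $\beta'\in\relint\conv(A)$. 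The two inductive inequalities then combine via the identity $\widetilde\lambda+\tau\widetilde\lambda'=(1-\tau\tau')\lambda$ to give exactly $(1-\tau\tau')$ times the target inequality, and $1-\tau\tau'>0$ because $\beta\neq\beta'$ forces $\tau,\tau'<1$. No simultaneous system is needed; one symmetric step per induction suffices. Replacing your $(A_0,\alpha_0)$ by this $(A_2,\beta')$ closes the gap.
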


Before we prove Theorem~\ref{thm:equiv} we consider the duals of the sub-cones $\Po{\cA, \beta}$ and $\Pe{\cA, \beta}$ of $\CS({\cA,\cB})$.

\begin{lemma}\label{lem:dualcone}
	Let $\cA\subseteq\R^n$ be a non-empty finite set.
	\begin{enumerate}[(1)]
		\item For $ \beta \in \N^n \setminus (2 \N)^n$, the dual cone of $\Po{\cA, \beta}$ consists of those $(v,w) \in \R^{(\cA ,\set{\beta})}$ where
		\begin{enumerate}[(a)]
			\item $v_\alpha \geq 0$ for all $\alpha \in \cA$, and
			\item $\ln|w_\beta| \leq \sum_{\alpha\in \cA} \lambda_{\alpha} \ln(v_\alpha)$
			for all $\lambda \in \Lambda(\cA, \beta)$.
		\end{enumerate}
		\item For $\beta \in \R^n \setminus \cA$, the dual cone of $\Pe{\cA, \beta}$ consists of those $(v,w) \in \R^{(\cA,\set{\beta})}$ satisfying (a), (b) and in addition
		\begin{enumerate}[(a)]
			\item[(c)] $w_\beta \geq 0$.
		\end{enumerate}
	\end{enumerate}
\end{lemma}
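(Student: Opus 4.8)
The plan is to read both statements off from Theorem~\ref{thm:oddImplication} and the weighted arithmetic-geometric mean inequality. First I would record which functions have to be tested against. For fixed $c\in\R_+^\cA$, the equivalence $(1)\Leftrightarrow(3)$ of Theorem~\ref{thm:oddImplication} says that $f=\sum_{\alpha\in\cA}c_\alpha|\xb|^\alpha+d\xb^\beta$ is non-negative precisely when $|d|\le\Theta(c):=\max_{\lambda\in\Lambda(\cA,\beta)}\prod_{\alpha\in\cA}(c_\alpha/\lambda_\alpha)^{\lambda_\alpha}$, and --- since $|\xb|^\beta\ge0$ --- that $f=\sum_\alpha c_\alpha|\xb|^\alpha+d|\xb|^\beta$ is non-negative precisely when $d\ge-\Theta(c)$. (If $\Lambda(\cA,\beta)=\emptyset$, i.e.\ $\beta\notin\conv(\cA)$, then Theorem~\ref{thm:oddImplication}(2) with $\nu=0$ forces $d=0$, resp.\ $d\ge0$; condition~(b) is then vacuous and both claims are immediate, so assume $\Lambda(\cA,\beta)\neq\emptyset$.) Unwinding the definition of the dual cone, $(v,w)\in\Po{\cA,\beta}^*$ is equivalent to
\begin{equation*}
\sum_{\alpha\in\cA}v_\alpha c_\alpha\ \ge\ |w_\beta|\prod_{\alpha\in\cA}\left(\frac{c_\alpha}{\lambda_\alpha}\right)^{\lambda_\alpha}\qquad\text{for all }c\in\R_+^\cA,\ \lambda\in\Lambda(\cA,\beta),\tag{$\ast$}
\end{equation*}
while $(v,w)\in\Pe{\cA,\beta}^*$ is equivalent to $w_\beta\ge0$ (let $d\to\infty$) together with~$(\ast)$ with $|w_\beta|$ replaced by $w_\beta$.

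Granting this reduction, condition~(a) is necessary --- inserting $f=|\xb|^\alpha$ gives $(v,w)(f)=v_\alpha\ge0$ --- and it is what makes the right side of~(b) meaningful under $0\ln0=0$, $\ln0=-\infty$. Assuming~(a), the implication (b)$\,\Rightarrow(\ast)$ is the weighted arithmetic-geometric mean inequality with weights $\lambda$ and values $y_\alpha=v_\alpha c_\alpha/\lambda_\alpha$ (omitting the weight-zero indices): it gives $\sum_\alpha v_\alpha c_\alpha\ge\left(\prod_\alpha v_\alpha^{\lambda_\alpha}\right)\prod_\alpha(c_\alpha/\lambda_\alpha)^{\lambda_\alpha}$, and $\prod_\alpha v_\alpha^{\lambda_\alpha}=\exp\left(\sum_\alpha\lambda_\alpha\ln v_\alpha\right)\ge|w_\beta|$ by~(b). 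For $(\ast)\Rightarrow$(b) I would fix $\lambda\in\Lambda(\cA,\beta)$. If $v_\alpha>0$ for all $\alpha$ with $\lambda_\alpha>0$, substitute $c_\alpha=\lambda_\alpha/v_\alpha$ (and $c_\alpha=0$ if $\lambda_\alpha=0$); then $\sum_\alpha v_\alpha c_\alpha=1$ and $\prod_\alpha(c_\alpha/\lambda_\alpha)^{\lambda_\alpha}=\prod_\alpha v_\alpha^{-\lambda_\alpha}$, so $(\ast)$ reads $|w_\beta|\le\prod_\alpha v_\alpha^{\lambda_\alpha}$, i.e.\ (b) for this $\lambda$. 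If instead $v_{\alpha_0}=0$ with $\lambda_{\alpha_0}>0$, take $c_{\alpha_0}=t$, $c_\alpha=\lambda_\alpha$ for the other $\alpha$ with $\lambda_\alpha>0$, and $c_\alpha=0$ otherwise: the left side of~$(\ast)$ stays bounded as $t\to\infty$ while the product tends to $\infty$, forcing $w_\beta=0$, in accordance with $\ln|w_\beta|\le\sum_\alpha\lambda_\alpha\ln v_\alpha=-\infty$.

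This proves part~(1). Part~(2) runs identically with the one-sided bound on $d$: $f=|\xb|^\alpha$ and $f=|\xb|^\beta$ yield~(a) and~(c), and since then $|w_\beta|=w_\beta$, the $(\ast)$-equivalence delivers~(b) without change. The step that needs the most care is not the arithmetic-geometric mean estimate but the bookkeeping of degenerate data --- indices with $\lambda_\alpha=0$ or $v_\alpha=0$, the $\pm\infty$ conventions, and the case $\Lambda(\cA,\beta)=\emptyset$ dispatched at the start --- together with verifying each time that the AG function substituted into the pairing is genuinely non-negative; for the choice $c_\alpha=\lambda_\alpha/v_\alpha$ this is exactly the equality case of Theorem~\ref{thm:oddImplication}(3), with $|d|=\prod_\alpha(c_\alpha/\lambda_\alpha)^{\lambda_\alpha}=\prod_\alpha v_\alpha^{-\lambda_\alpha}$ and AG witness $\lambda$.
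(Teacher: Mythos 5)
Your proof is correct and follows essentially the same route as the paper's: condition (a) (and (c) in the even case) by testing against $|\xb|^\alpha$ (resp.\ $|\xb|^\beta$), the implication from (b) via the weighted AM--GM inequality, and the converse by plugging in the test coefficients $c_\alpha=\lambda_\alpha/v_\alpha$, which are a scalar multiple of the coefficients of the extremal circuit function the paper uses. The only cosmetic differences are your explicit reformulation of dual-cone membership as the inequality $(\ast)$ and your handling of the degenerate case $v_{\alpha_0}=0$ by a direct $t\to\infty$ argument rather than by continuity of the pairing, both of which are fine.
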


\begin{proof}
	We prove the even and odd case simultaneously.
	Let $(v,w) \in (\Po{\cA, \beta})^*$ or $(v,w) \in (\Pe{\cA, \beta})^*$.
	First we show that it satisfies the claimed conditions.
	\begin{asparadesc}
	\item[\normalfont{\emph{(a) and (c):}}]
		For every $\alpha \in \cA$, it holds that $|\xb|^\alpha \in \Po{\cA,\beta}$ resp. $|\xb|^\alpha \in \Pe{\cA,\beta}$ and thus $0 \leq (v,w)( |\xb|^\alpha) = v_\alpha$, as claimed.
		In the even case, we also have that $|\xb|^\beta \in \Pe{\cA,\beta}$ and thus by the same argument \emph{(c)} holds.
	\item[\normalfont{\emph{(b):}}] 
		Fix a  $\lambda \in \Lambda(\cA, \beta)$.
		First assume that $v_{\alpha} \neq 0$ for all $\alpha\in \cA$.
		Then
		\[ 
			f := \sum_{\alpha\in \cA} \left(\prod_{\alpha'\in \cA} v_{\alpha'}^{\lambda_{\alpha'}}\right)
			\frac{\lambda_\alpha}{v_\alpha} |\xb|^{\alpha} - 
			\begin{cases}
				|\xb|^\beta &\text{ in the even case},\\
				\sgn(w_\beta) \xb^\beta &\text{ in the odd case}
			\end{cases}
		\]
		is an (even or odd) AG function and a straightforward computation shows that $f$ satisfies the condition $(3)$ of Theorem~\ref{thm:oddImplication} (with the given $\lambda$), hence $f$ is non-negative.
		Thus,
		\[
		0 \leq (v,w)(f) = 
		\begin{cases}
		  \prod_{\alpha\in \cA} v_{\alpha}^{\lambda_\alpha} - w_\beta &
		  \text{ in the even case}, \\
		  \prod_{\alpha\in \cA} v_{\alpha}^{\lambda_\alpha} - \sgn(w_\beta) w_\beta &
		  \text{ in the odd case},
		  \end{cases}
		\]
		which is equivalent to property \emph{(b)}.
		Since the mapping~\eqref{eq:pairing} is continuous in $(v,w)$, the statements
		also hold if $v_{\alpha} = 0$ for some $\alpha \in \cA$.
	\end{asparadesc}
	
	For the converse implication,
	assume that $v$ satisfies conditions \emph{(a)}, \emph{(b)}, and in the even case also \emph{(c)}.
	
	We need to show that every non-negative AG function $f = \sum_{\alpha\in \cA} c_{\alpha} |\xb|^{\alpha} + d \xb^\beta$ resp. $f = \sum_{\alpha\in \cA} c_{\alpha} |\xb|^{\alpha} + d |\xb|^\beta$ satisfies $(v,w)(f) \geq 0$.
	Let $\lambda \in \Lambda(\cA, \beta)$ be an AG witness for $f$ as in Theorem~\ref{thm:oddImplication}.
	Observe that
	\[
		\sum_{\alpha\in \cA} v_{\alpha} c_{\alpha} 
		= \sum_{\alpha\in \cA} \lambda_{\alpha} \left( \frac{v_{\alpha} c_{\alpha}} {\lambda_{\alpha}} \right) 
		\geq \prod_{\alpha\in \cA} \left(\frac{v_{\alpha} c_\alpha}{\lambda_\alpha}\right)^{\lambda_\alpha}
		= \prod_{\alpha\in \cA} v_{\alpha}^{\lambda_\alpha} \cdot \prod_{\alpha\in \cA} \left(\frac{c_\alpha}{\lambda_\alpha}\right)^{\lambda_\alpha}
		\geq |w_\beta| \prod_{\alpha\in \cA} \left(\frac{c_\alpha}{\lambda_\alpha}\right)^{\lambda_\alpha},
	\]
	which implies
	\begin{equation}
	  \label{eq:nonneg1}
		(v,w)(f) = \sum_{\alpha\in \cA} v_{\alpha} c_{\alpha} + w_\beta d \geq
		|w_\beta|\left( \prod_{\alpha\in \cA} \left(\frac{c_{\alpha}}{\lambda_{\alpha}}\right)^{\lambda_{\alpha}} + \sgn(w_\beta) d\right).
	\end{equation}
	In the even case, we have $\sgn(w_\beta) = +1$ by \emph{(c)}, and the right expression in~\eqref{eq:nonneg1} is non-negative, because $f$ is a non-negative AG function.
	In the odd case, observe that then the non-negativity of $f$ yields non-negativity of 
	the right expression in~\eqref{eq:nonneg1} as well.
\end{proof}

\begin{remark}
	In the beginning of this section, we identified the dual space of $\R[\cA,\cB]$ with $\R^{(\cA,\cB)}$.
	Using the reverse identification and associating for every $v\in \R^{(\cA,\cB)}$ a function of form \eqref{eq:func1}, we can identify the dual cones $(\Pe{A,\beta})^*$ resp.~$(\Po{A,\beta})^*$ with the cones of all functions of the form \eqref{eq:func1} with coefficients in $(\Pe{A,\beta})^*$ resp.~$(\Po{A,\beta})^*$.
	If $\beta \in \conv(\cA)$, then by Theorem~\ref{thm:oddImplication} and Lemma~\ref{lem:dualcone}, it is easy to see that $(\Pe{A,\beta})^* \subseteq \Pe{A,\beta}$ and $(\Po{A,\beta})^* \subseteq \Po{A,\beta}$.
	
	In particular, this means that every function of the form \eqref{eq:func1} with coefficients in $(\Pe{A,\beta})^*$ resp. $(\Po{A,\beta})^*$ is non-negative.
	Hence, $\CS(\cA,\cB)^* \subseteq \CS(\cA,\cB)$.
	
	The reverse inclusion does not hold in general.
	With $\cA=\set{0,2}$, $\beta=1$, $v_0=v_2=1$ and $v_1=-2$, we obtain $(\Pe{A,\beta})^* \subsetneq \Pe{A,\beta}$ as well as $(\Po{A,\beta})^* \subsetneq \Po{A,\beta}$.
	Setting $\cB=\set{1}$ it follows that $\CS(\cA,\cB)^* \subsetneq \CS(\cA,\cB)$.
\end{remark}

In addition, we need the following lemma for the proof of Theorem~\ref{thm:equiv}.
Here, for $\lambda \in \R_{+}^\cA$, denote by $\supp(\lambda) = \set{ \alpha \in \cA  \with \lambda_{\alpha} \neq 0}$
its support.

\begin{lemma}[Essentially Lemma 8 of \cite{mcw-2018}]
	\label{le:support}
	Let $\cA \subseteq\R^n$ be a non-empty finite set and $\beta \in \conv(\cA)$. 
	Then every $\lambda \in \Lambda(\cA, \beta)$ can be written as a sum
	\[ \lambda = \sum_{j=1}^k \mu_j \lambda^{(j)} \]
	with $k \geq 1$, $\mu \in \R^k_{+}, \sum_{j=1}^k \mu_j = 1$ and $\lambda^{(j)}\in \Lambda(\cA, \beta)$ for all $j$, such that the support of each $\lambda^{(j)}$ is affinely independent.
\end{lemma}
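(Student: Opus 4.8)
The plan is to prove Lemma~\ref{le:support} by induction on $|\supp(\lambda)|$, exploiting the fact that $\Lambda(\cA,\beta)$ is a polytope and that a point of a polytope with affinely dependent "active" support cannot be a vertex. More precisely, fix $\lambda \in \Lambda(\cA,\beta)$ and let $S = \supp(\lambda)$. If $S$ is affinely independent, we are done with $k=1$. Otherwise, there is a nonzero vector $\eta \in \R^{\cA}$ supported on $S$ with $\sum_{\alpha \in S}\eta_\alpha = 0$ and $\sum_{\alpha \in S}\eta_\alpha \alpha = 0$; this is exactly the statement that the vectors $(\alpha,1)_{\alpha \in S}$ are linearly dependent. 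Then $\lambda \pm t\eta$ still satisfies the two affine equations defining $\Lambda(\cA,\beta)$ for every $t$, and since $\lambda_\alpha > 0$ on all of $S$, for small $t>0$ both $\lambda + t\eta$ and $\lambda - t\eta$ remain in $\R_+^{\cA}$, hence in $\Lambda(\cA,\beta)$.

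Next I would push $t$ to the boundary: let $t^{+}$ be the largest $t$ with $\lambda + t\eta \ge 0$ and $t^{-}$ the largest $t$ with $\lambda - t\eta \ge 0$; both are finite and positive because $\eta$ has both a positive and a negative coordinate (it sums to zero and is nonzero). Set $\lambda^{+} := \lambda + t^{+}\eta$ and $\lambda^{-} := \lambda - t^{-}\eta$. Each lies in $\Lambda(\cA,\beta)$, and each has strictly smaller support than $\lambda$, since at $t = t^{+}$ (resp.\ $t = t^{-}$) at least one coordinate that was positive in $\lambda$ has become zero. Moreover $\lambda$ is a convex combination of $\lambda^{+}$ and $\lambda^{-}$: solving $\lambda = \mu \lambda^{+} + (1-\mu)\lambda^{-}$ forces $\mu = t^{-}/(t^{+}+t^{-}) \in (0,1)$, and one checks this indeed reproduces $\lambda$. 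By the induction hypothesis applied to $\lambda^{+}$ and $\lambda^{-}$, each is a convex combination of elements of $\Lambda(\cA,\beta)$ with affinely independent support; substituting these decompositions into $\lambda = \mu\lambda^{+} + (1-\mu)\lambda^{-}$ and merging the convex-combination coefficients yields the claimed representation of $\lambda$. The base case $|\supp(\lambda)| \le 1$ is trivial, as a single point (or the empty set) is affinely independent.

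The only real subtlety — and the step I would be most careful about — is bookkeeping of the supports and the strict decrease: one must verify that moving to the boundary genuinely kills a coordinate (which needs $\eta$ to be nonzero on $S$, guaranteed by construction) and that the two new points still satisfy $\sum \lambda_\alpha = 1$ and $\sum \lambda_\alpha \alpha = \beta$ (immediate, since $\eta$ is in the kernel of both linear forms). Everything else is the standard Carathéodory/vertex argument for polytopes, so I expect no genuine obstacle; the statement is essentially a restatement of the fact that every point of a polytope is a convex combination of its vertices, specialized to the polytope $\Lambda(\cA,\beta)$ whose vertices have affinely independent support. (Indeed this is why the lemma is attributed to \cite{mcw-2018}: the argument there is exactly of this type.) One could alternatively invoke Carathéodory's theorem directly on the vertex set of $\Lambda(\cA,\beta)$, but the inductive argument has the advantage of being self-contained and of making the affine-independence of the supports transparent.
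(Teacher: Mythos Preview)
Your argument is correct and rests on the same core observation as the paper's proof: an affinely dependent support yields a nonzero direction $\eta$ in the lineality of the defining affine constraints, so $\lambda$ can be perturbed both ways inside $\Lambda(\cA,\beta)$. The paper packages this as ``vertices of $\Lambda(\cA,\beta)$ have affinely independent support, and every point of a polytope is a convex combination of its vertices,'' whereas you unroll that into an explicit induction on $|\supp(\lambda)|$ by pushing to the boundary; the two are equivalent, with the paper's version shorter and yours more self-contained.
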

\begin{proof}
	Since the polytope $\Lambda(\cA, \beta)$ is the convex hull of its vertices, it suffices to show that the support of every vertex of $\Lambda(\cA, \beta)$ is an affinely independent set.
	
	Let $\lambda$ be a vertex of $\Lambda(\cA, \beta)$ and $\cA' := \set{\alpha \with \lambda_\alpha > 0}$ be its support.
	Assume to the contrary that $\cA'$ is affinely dependent.
	Then there exists $\mu \in \R^{\cA} \setminus \{0\}$ with $\sum_{\alpha \in \cA'} \mu_\alpha = 0$, $\sum_{\alpha \in \cA'} \mu_\alpha \alpha = 0$ and $\mu_{\alpha} = 0$ for $\alpha \not\in \cA'$.
	Since $\lambda_{\alpha} > 0$ for all $\alpha \in \cA'$, for sufficiently small $\epsilon > 0$ both $\lambda + \epsilon \mu$ and $\lambda - \epsilon \mu$ are contained in $\Lambda(\cA, \beta)$.
	But this implies that $\lambda = \frac{1}{2}(\lambda + \epsilon \mu) + \frac{1}{2}(\lambda - \epsilon \mu)$ is not a vertex of $\Lambda(\cA, \beta)$, a contradiction.
\end{proof}

\begin{proof}[Proof of Theorem \ref{thm:equiv}]
\newcommand{\tle}{\widetilde{\lambda}}
\newcommand{\tlz}{\widetilde{\lambda}'}
\begin{asparaenum}
	\item[(1):]
	Since $|\xb|^\alpha \in \CS(\cA,\cB)$ for every $\alpha \in \cA$, every $v \in \CS(\cA,\cB)^*$ satisfies
	\[ 0 \leq (v,w)(|\xb|^\alpha) = v_\alpha. \]
	
	\item[(2):]
	The implications (b) $\implies$ (c) $\implies$ (d) are trivial.
	For the equivalence of (a) and (b) note that
	\[
		\CS(\cA, \cB)^* = \bigcap_{\alpha \in\cA} (\Pe{\cA\setminus\set{\alpha}, \alpha})^* \cap \bigcap_{\beta \in\cB} (\Po{\cA, \beta})^*,
	\]
	because Minkowski sum and intersection are dual operations (see, e.g., \cite{schneider-book}, Theorem 1.6.3).
	Hence, the claim follows with Lemma~\ref{lem:dualcone}.
	It remains to show (c) $\implies$ (b) and (d) $\implies$ (c).
	
	\begin{asparaenum}
		\item[(c) $\implies$ (b):]
		Let $\beta \in \cA$ and $\lambda \in \Lambda(\cA, \beta)$.
		By Lemma~\ref{le:support}, we can decompose $\lambda$ as
		$\lambda = \sum_{j=1}^k \mu_j \lambda^{(j)}$ with $k \geq 1$,
		$\mu \in \R^k_{+}, \sum_{j=1}^k \mu_j = 1$ and $\lambda^{(1)}, \dotsc, \lambda^{(k)} \in \Lambda(\cA, \beta)$, such that the support of each $\lambda^{(j)}$ is affinely independent.
		Now the claim follows from
		\[
		\ln|v_\beta| = \sum_{j=1}^k \mu_j \ln|v_\beta|
		\stackrel{(c)}{\leq} \sum_{j=1}^k \mu_j \sum_{\alpha} \lambda^{(j)}_\alpha \ln v_\alpha
		= \sum_{\alpha}\ln v_\alpha \sum_{j=1}^k \mu_j  \lambda^{(j)}_\alpha 
		= \sum_{\alpha} \lambda_\alpha \ln v_\alpha.
		\]
		For $\beta \in \cB$, the proof is analogous by considering $w_\beta$ instead of $v_\beta$.
		
		\item[(d) $\implies$ (c):]
		We start with the even case and proceed by induction on $r = \re(A, \beta)$.
		Since the base case $r=0$ captures exactly the reduced circuits, there is nothing to prove in this case.
		
		Now consider an even circuit $(A, \beta) \in I(\cA, \cA)$ with $\re(A,\beta) > 0$.
		Then there exists a $\beta' \in \conv(A) \cap \cA$ with $\beta' \notin A$ and $\beta' \neq \beta$.
		Set $\lambda := \lambda(A,\beta)$ and $\lambda' := \lambda(A, \beta')$.
		
		Let $\tau \geq 0$ be the maximal real number with $\tle := \lambda - \tau \lambda' \in \R^A_{+}$.
		This number exists clearly, and we have $\tau \leq 1$ because the coordinate sums of $\lambda$ and $\lambda'$ are equal.
		Further, it holds that $\tau > 0$ because all components of $\lambda$ are positive.
		
		Similarly,
		let $\tau'$ be the maximal real number with $\tlz := \lambda' - \tau' \lambda \in \R^A_{+}$. As above, it holds that $0 \leq \tau' \leq 1$.
		Moreover, note that $\beta \neq \beta'$ implies $\tau, \tau' < 1$.
		The construction gives
		\[ \begin{aligned}
			\beta &= \sum_{\alpha\in A} \tle_\alpha \alpha + \tau \beta', & & & \sum_{\alpha\in A} \tle_\alpha + \tau &= 1,\\
			\beta' &= \sum_{\alpha\in A} \tlz_\alpha \alpha + \tau' \beta & &\text{and} & \sum_{\alpha\in A} \tlz_\alpha + \tau' &= 1.
		\end{aligned} \]
		Note that at least one of the entries of $\tle$ is zero, and moreover, 
		$\tau'$ or at least one of the entries of $\tlz$ is zero.
		Define two new even circuits $(A_1, \beta)$ and $(A_2, \beta')$ with $A_1 := \supp(\tle) \cup\set{\beta'}$ and
		\[
		A_2 := \begin{cases}
		\supp(\tlz) \cup \set{\beta} &\text{ if } \tau' > 0,\\
		\supp(\tlz) &\text{ if } \tau' = 0.
		\end{cases}
		\]
		We observe $\conv(A_1) \subsetneq \conv(A)$, and since $\beta'$ is not counted towards $\re(A_1,\beta)$, it follows that $\re(A_1,\beta) < \re(A, \beta)$.
		Similarly, since $\conv(A_2) \subseteq \conv(A)$ and $\beta'$ is not counted towards $\re(A_2, \beta')$, we obtain $\re(A_2,\beta') < \re(A, \beta)$.
		Hence, by induction,
		\begin{align}
			\ln(|v_\beta|) & \leq \sum_{\alpha\in A} \tle_\alpha \ln(v_{\alpha}) + \tau \ln(v_{\beta'}) \label{eq:circuit1} \qquad\text{and}\\
			\ln(|v_{\beta'}|) & \leq \sum_{\alpha\in A} \tlz_\alpha \ln(v_{\alpha}) + \tau' \ln(v_{\beta}). \label{eq:circuit2}
		\end{align}
		Note that $v_{\beta'} \ge 0$ and $v_{\beta} \ge 0$.
		Adding $\tau$ times~\eqref{eq:circuit2} to~\eqref{eq:circuit1}
		gives, due to $\tle + \tau \tlz = (1-\tau\tau')\lambda$,
		the uniform inequality
		\[
			0 \le (1-\tau \tau') \Big( \sum_{\alpha\in A} \lambda_\alpha \ln v_{\alpha} - \ln |v_\beta| \Big).
		\]
		Since $1-\tau \tau' > 0$, this proves the claim.

		For the odd case, we proceed by induction on $\ro(A, \beta)$, and the base case consists again of the reduced circuits.
		Fix an odd circuit $(A, \beta) \in I(\cA, \cB)$ with $\ro(A, \beta) > 0$.
		Again, there exists a $\beta' \in \conv(A) \cap \cA$ with $\beta' \notin A$, but this time $\beta' = \beta$ is possible.
		
		We define $\lambda, \lambda', \tau$ and $\tle = \lambda - \tau \lambda'$ as above. This time, $\tau = 1$ is possible.
		Further, we set $\tau' := 0$ and (thus) $\tlz := \lambda'$.
		We define the new circuits $(A_1, \beta)$ and $(A_2, \beta')$ as above, where this time $(A_1, \beta)$ is odd and $(A_2, \beta')$ is even.
		Since $\ro(A_1, \beta) < \ro(A, \beta)$ as above, we obtain
 		\begin{align}
 			\ln(|w_\beta|) & \leq \sum_{\alpha\in A} \tle_\alpha \ln(v_{\alpha}) + \tau \ln(v_{\beta'}) \label{eq:circuit3}\qquad\text{and}\\
 			\ln(|v_{\beta'}|) & \leq \sum_{\alpha\in A} \tlz_\alpha \ln(v_{\alpha}), \label{eq:circuit4}
 		\end{align}
 		where the second inequality follows since we have already shown (d) $\implies$ (c) for even circuits.
 		As above, we add $\tau$ times \eqref{eq:circuit4} to \eqref{eq:circuit3} to obtain the desired inequality.\qedhere
	\end{asparaenum} 
\end{asparaenum}
\end{proof}

A description of the dual of the SONC cone was obtained in \cite[Theorem 3.1]{dnt-2018},
and a description of the dual of the SAGE cone in \cite[Proposition 2.4]{chandrasekaran-shah-2016}.
Both descriptions are based on projections and differ from the one in Theorem~\ref{thm:equiv}.
For completeness, we show here that they are in fact equivalent.

\begin{proposition}\label{prop:dual}
	Let $\emptyset \neq \cA \subseteq\R^n$ be a finite set and $\beta \in \conv(\cA)$. 
	For $v \in \R_+^{\cA}$ and $w_\beta \in \R$, the following are equivalent:
	\begin{enumerate}[(1)]
		\item $\forall \lambda \in \Lambda(\cA, \beta) \colon \ln|w_\beta| \leq \sum_{\alpha\in \cA} \lambda_{\alpha} \ln(v_\alpha)$.
		\item $\exists\tau\in\R^n,\ \forall \alpha\in\cA \colon |w_\beta| \ln\left(\frac{|w_\beta|}{v_{\alpha }}\right) \leq (\beta - \alpha)^T \tau.$
		\item $\exists v^* \geq |w_\beta|, \exists\tau\in\R^n,\forall \alpha\in\cA \colon
		v^* \ln\left(\frac{v^*}{v_{\alpha}}\right) \leq (\beta - \alpha)^T \tau.$
	\end{enumerate}
\end{proposition}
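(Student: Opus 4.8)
The plan is to prove the three conditions equivalent by running the cycle $(1)\Rightarrow(2)\Rightarrow(3)\Rightarrow(1)$. Set $M:=\inf\{\sum_{\alpha\in\cA}\lambda_\alpha\ln v_\alpha : \lambda\in\Lambda(\cA,\beta)\}\in\R\cup\{-\infty\}$ (the infimum being over a nonempty set since $\beta\in\conv(\cA)$), so that condition~(1) is exactly the inequality $\ln|w_\beta|\le M$. The implication $(2)\Rightarrow(3)$ is immediate by taking $v^*:=|w_\beta|$ together with the same $\tau$. For $(3)\Rightarrow(1)$ I would argue directly: the case $v^*=0$ forces $|w_\beta|=0$, so (1) holds vacuously; if $v^*>0$, then $v_\alpha>0$ for every $\alpha$ (otherwise the left side of the inequality in (3) is $+\infty$, which cannot be bounded by $(\beta-\alpha)^T\tau$), and for any $\lambda\in\Lambda(\cA,\beta)$ the $\lambda$-weighted sum of the inequalities $v^*\ln(v^*/v_\alpha)\le(\beta-\alpha)^T\tau$ collapses --- because $\sum_\alpha\lambda_\alpha(\beta-\alpha)=0$ --- to $v^*\bigl(\ln v^*-\sum_\alpha\lambda_\alpha\ln v_\alpha\bigr)\le 0$; dividing by $v^*>0$ and using $v^*\ge|w_\beta|$ yields $\ln|w_\beta|\le\sum_\alpha\lambda_\alpha\ln v_\alpha$ for every $\lambda$, i.e.\ $\ln|w_\beta|\le M$, which is (1).

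The heart of the argument is $(1)\Rightarrow(2)$, which I would obtain from linear programming duality. I would first reduce to $v\in\R_{>0}^\cA$; the degenerate case, in which some $v_\alpha$ (or $w_\beta$) vanishes, is treated separately, interpreting the relevant inequalities through the conventions $0\ln 0=0$, $\ln 0=-\infty$ and $y\ln(y/0)=\infty$ for $y>0$, much as in the proof of Lemma~\ref{lem:dualcone}. For strictly positive $v$, $M$ is the finite optimal value of the linear program $\min\{\sum_{\alpha\in\cA}\lambda_\alpha\ln v_\alpha : \lambda\ge 0,\ \sum_{\alpha\in\cA}\lambda_\alpha\alpha=\beta,\ \sum_{\alpha\in\cA}\lambda_\alpha=1\}$, which is feasible (since $\beta\in\conv(\cA)$) and bounded (since $\Lambda(\cA,\beta)$ is compact). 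Its LP dual --- with a variable $\tau\in\R^n$ dual to the moment equations and a variable $s\in\R$ dual to the normalization --- is $\max\{\beta^T\tau+s : \alpha^T\tau+s\le\ln v_\alpha\text{ for all }\alpha\in\cA\}$, and by strong LP duality it is attained at some $(\tau^*,s^*)$ with value $M$. Taking $s^*=\min_\alpha(\ln v_\alpha-\alpha^T\tau^*)$ at the optimum, we obtain $\ln v_\alpha+(\beta-\alpha)^T\tau^*\ge M\ge\ln|w_\beta|$, that is $\ln(|w_\beta|/v_\alpha)\le(\beta-\alpha)^T\tau^*$, for every $\alpha$; multiplying the $\alpha$-th inequality by $|w_\beta|\ge 0$ and setting $\tau:=|w_\beta|\tau^*$ (and $\tau:=0$ if $w_\beta=0$) gives exactly condition~(2).

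I expect the main obstacle to be precisely this duality step and the care it demands. One must use that the normalization $\sum_\alpha\lambda_\alpha=1$ makes $\Lambda(\cA,\beta)$ compact, so that strong LP duality holds \emph{with the dual optimum attained}: mere weak duality, or a supremum that is approached but not attained, would not produce a single $\tau$ that works for all $\alpha$ simultaneously, which is what condition~(2) requires. The other delicate point is the reduction to strictly positive $v$ --- when $v_\alpha=0$ for some $\alpha$, or $w_\beta=0$, several of the quantities above equal $\pm\infty$ and the equivalence has to be re-verified against the stated conventions; this is routine but should not be skipped. (Note, incidentally, that condition~(3) is essentially the relative-entropy/projection description of the dual AGE cone from \cite{chandrasekaran-shah-2016} and \cite{dnt-2018}, so that the proposition identifies those descriptions with the one in Theorem~\ref{thm:equiv}.)
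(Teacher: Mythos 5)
Your proof is correct, and it rests on the same core idea as the paper's: dualize the linear system that defines $\Lambda(\cA,\beta)$. The differences are in the bookkeeping. The paper routes everything through an auxiliary condition (2') (namely $\exists\tau\colon\ln(|w_\beta|/v_\alpha)\le(\alpha-\beta)^T\tau$ for all $\alpha$), proves $(1)\iff(2')$ via Farkas' lemma as a pure feasibility alternative, and then obtains $(2)$ and $(3)$ from $(2')$ by scaling; you instead run the cycle $(1)\Rightarrow(2)\Rightarrow(3)\Rightarrow(1)$ and replace the Farkas step by strong LP duality. The two tools are equivalent, but your route carries an extra obligation that Farkas does not: you must know the dual optimum is \emph{attained}, not merely approached, to extract a single $\tau^*$ that works for all $\alpha$. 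You flag this explicitly and discharge it correctly via compactness of $\Lambda(\cA,\beta)$ (the normalization constraint $\sum_\alpha\lambda_\alpha=1$ together with $\lambda\ge 0$ confines $\Lambda(\cA,\beta)$ to the standard simplex). Your direct proof of $(3)\Rightarrow(1)$ by taking a $\lambda$-weighted combination of the inequalities in (3) is a clean shortcut that avoids re-deriving (2'); it is the weak-duality half of the same argument. As for the degenerate cases ($w_\beta=0$ or some $v_\alpha=0$): you defer them with the comment that they should be checked against the conventions, which matches the paper's level of detail — the paper also dispatches them in one sentence.
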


In this proposition, statement (1) is the one we used earlier,
statement (2) is the description of the dual SAGE cone used in \cite{chandrasekaran-shah-2016},
and statement (3) in conjunction with Theorem~\ref{thm:equiv}(c)
is the description of the dual SONC cone used in \cite{dnt-2018}.
	
\begin{proof}
	If $w_\beta = 0$ then all three conditions hold.
	Moreover, if $v_{\alpha} = 0$ for some $\alpha\in\cA$, then it is easy to see that all three conditions hold if and only if $w_\beta = 0$.
	Thus we may assume that $w_\beta \neq 0$ and $v_{\alpha} \neq 0$ for all~$\alpha\in\cA$.
	We will show the equivalence via the following variant of statement (2),
	\begin{enumerate}[(1)]
		\item[(2')] $\exists\tau\in\R^n,\ \forall \alpha\in\cA \colon
					\ln\left(\frac{|w_\beta|}{v_{\alpha }}\right) \leq (\alpha - \beta)^T \tau.$
	\end{enumerate}
	\begin{asparaenum}
	\item[(1) $\iff$ (2'):]
		Consider (2') as the feasibility of a linear system of inequalities in $\tau$.
		(2') is satisfied if and only if its Farkas alternative system (in the version of Proposition 1.7 of \cite{Ziegler})
		\begin{equation*}
		\begin{aligned}
		\exists \lambda \in \R^{\cA}_{+}:\quad &\sum_{\alpha\in\cA} \lambda_\alpha(-\alpha + \beta) = 0 \ \text{ and }
		&\sum_{\alpha\in\cA} \lambda_\alpha \cdot \left(-\ln\left( \frac{|w_\beta|}{v_{\alpha}} \right)\right) < 0
		\end{aligned}
		\end{equation*}
		does not have a solution.
	
		We can normalize $\lambda$ so that all its components sum to 1.
		Hence, the alternative system simplifies to
		\[ \sum_{\alpha\in\cA} \lambda_\alpha \ln |w_\beta| > \sum_{\alpha\in\cA} \lambda_\alpha v_{\alpha} > 0, \]
		i.e., to $\ln |w_\beta| > \sum_{\alpha\in\cA} \lambda_\alpha v_{\alpha}$.
		Since this is the opposite of (1), the equivalence of (1) and (2') follows.
	\item[(2') $\implies$ (2):]
		We obtain (2) from (2') by multiplying with $|w_\beta|$ and replacing $|w_\beta|\tau$ by~$-\tau$.
	\item[(2) $\implies$ (3):]
		This is trivial.
	\item[(3) $\implies$ (2'):]
		We have that $v^* \geq |w_\beta| > 0$ and thus we may divide the inequality in (3) by $v^*$ to obtain
		\[
			\exists\tau'\in\R^n,\forall \alpha\in\cA \colon
			\ln\left(\frac{v^*}{v_{\alpha}}\right) \leq (\beta - \alpha)^T \tau',
		\]
		where $\tau' = \tau / v^*$.
		Note that the left-hand side of the inequality is monotonous in $v^*$, and hence,
		\[ \ln\left(\frac{|w_\beta|}{v_{\alpha}}\right) \leq  \ln\left(\frac{v^*}{v_{\alpha}}\right) \leq (\beta - \alpha)^T \tau'.\]
		We further replace $\tau'$ by $-\tau'$ to obtain (2').\qedhere
	\end{asparaenum}
\end{proof}

\section{Applications of the dual cone}

\subsection{Non-negative AG functions are sums of non-negative circuit functions}\label{sec:AGisscf}

As a first application of our description of the dual cone, we prove the following generalization of \cite[Theorem 4]{mcw-2018}.

\begin{proposition}\label{prop:AGisscf}
	Let $\emptyset \neq \cA \subseteq\R^n$ and $\cB \subseteq\N^n\setminus (2\N)^n$ be finite sets.
	For every $f\in\CS(\cA,\cB)$, the following statements hold.
	\begin{enumerate}
		\item $f$ can be written as a sum of non-negative circuit functions 
		  whose supports are contained in $\supp f$.
		\item $f$ can be written as a sum of non-negative circuit functions
		   supported on reduced circuits in $\CS(\cA,\cB)$.
	\end{enumerate}
\end{proposition}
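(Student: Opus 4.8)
The plan is to reduce everything to the two sub-cones $\Pe{\cA\setminus\set{\alpha},\alpha}$ and $\Po{\cA,\beta}$ via the decomposition~\eqref{eq:decomp}, and then to decompose a single non-negative AG function into circuit functions by a dimension/support induction governed by the dual characterization in Theorem~\ref{thm:equiv}. Concretely, by~\eqref{eq:decomp} it suffices to prove both statements for a single non-negative even AG function $f=\sum_{\alpha\in\cA}c_\alpha|\xb|^\alpha - c_\gamma|\xb|^\gamma$ (with $c_\gamma\ge 0$, and $c_\alpha\ge 0$ for the remaining $\alpha$, $\gamma$ being the at-most-one negative term) and for a single non-negative odd AG function, since a sum of sums of circuit functions is again such a sum, and the supports only shrink. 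If $c_\gamma=0$ the function is a non-negative combination of monomials $|\xb|^\alpha$, each of which is trivially a (degenerate) non-negative circuit function on the reduced circuit $(\set{\alpha},\alpha)$, so assume $c_\gamma>0$; then $\gamma\in\conv(\cA_0)$ where $\cA_0=\set{\alpha\in\cA\with c_\alpha>0}$, because otherwise $f$ takes negative values. Pick an AG witness $\lambda\in\Lambda(\cA_0,\gamma)$ for $f$ as in Theorem~\ref{thm:oddImplication}(3); by Lemma~\ref{le:support} we may write $\lambda=\sum_j\mu_j\lambda^{(j)}$ with each $\supp(\lambda^{(j)})$ affinely independent, and by convexity of $t\mapsto t^{1/t}$-type expressions (or just the weighted AM/GM bound) the product $\prod_\alpha(c_\alpha/\lambda_\alpha)^{\lambda_\alpha}$ is bounded below by $\prod_j\big(\prod_\alpha(c_\alpha/\lambda^{(j)}_\alpha)^{\lambda^{(j)}_\alpha}\big)^{\mu_j}$ after suitably splitting the coefficients $c_\alpha$, so we may reduce to the case that $\supp(\lambda)$ is affinely independent, i.e.\ $(\supp(\lambda),\gamma)$ is a circuit.

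For statement~(1), once $(\supp(\lambda),\gamma)$ is a circuit one peels off a genuine non-negative circuit function: for small $\epsilon>0$ set $g_\epsilon=\sum_{\alpha\in\supp(\lambda)}\epsilon\lambda_\alpha|\xb|^\alpha - \epsilon\Theta|\xb|^\gamma$ where $\Theta=\prod_\alpha(c_\alpha/\lambda_\alpha)^{\lambda_\alpha}\ge c_\gamma$; this is a non-negative circuit function supported on $(\supp(\lambda),\gamma)\subseteq\supp f$, and for $\epsilon$ the largest value keeping all residual coefficients non-negative and the residual still non-negative, $f-g_\epsilon$ is again a non-negative AG function whose support is strictly smaller (one outer coefficient vanishes, or the inner coefficient vanishes). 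Iterating terminates. The odd case is identical, using an odd circuit function $\sum\epsilon\lambda_\alpha|\xb|^\alpha-\epsilon\Theta\xb^\gamma$ and the same exhaustion argument; here $\supp(\lambda)$ affinely independent together with $\gamma\in\relint\conv(\supp(\lambda))$ after discarding zero weights gives a bona fide odd circuit.

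For statement~(2) the extra requirement is that the circuits be \emph{reduced} in $\CS(\cA,\cB)$, i.e.\ $\re(A,\beta)=0$ (resp.\ $\ro(A,\beta)=0$). Here I would mirror the induction in the proof of Theorem~\ref{thm:equiv}, (d)$\implies$(c), run on the primal side: given a non-negative circuit function on a non-reduced circuit $(A,\beta)$ with $\re(A,\beta)>0$, choose $\beta'\in\conv(A)\cap\cA$, $\beta'\notin A$, $\beta'\ne\beta$, let $\lambda=\lambda(A,\beta)$, $\lambda'=\lambda(A,\beta')$, and form $\tle=\lambda-\tau\lambda'$, $\tlz=\lambda'-\tau'\lambda$ exactly as there; the two identities for $\beta$ and $\beta'$ let one rewrite $|\xb|^\beta$ (resp.\ $\xb^\beta$) as a product involving $|\xb|^{\beta'}$ via AM/GM, so the circuit function on $(A,\beta)$ is dominated by a non-negative combination of circuit functions on the strictly smaller circuits $(A_1,\beta)$ and $(A_2,\beta')$ with $\re$ (resp.\ $\ro$) strictly decreased, after which induction applies; the base case $\re=0$ (resp.\ $\ro=0$) is the reduced case. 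The main obstacle I expect is making the coefficient bookkeeping in this "circuit-splitting" step fully rigorous in the primal: one must check that after substituting the AM/GM estimate the residual coefficients are non-negative and that the inner coefficient of each new circuit function is bounded by its circuit number, i.e.\ that the estimates of Theorem~\ref{thm:oddImplication}(3) survive the split — this is precisely the dual inequality $(1-\tau\tau')(\sum_\alpha\lambda_\alpha\ln v_\alpha-\ln|v_\beta|)\ge 0$ from the proof of Theorem~\ref{thm:equiv}, read coefficient-wise, so the cleanest route is to invoke that computation verbatim rather than redo it.
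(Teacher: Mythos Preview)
Your approach is genuinely different from the paper's, and significantly more laborious. The paper's proof of both parts is a three-line duality argument: by Theorem~\ref{thm:equiv}(c) (respectively~(d)) together with Lemma~\ref{lem:dualcone}, the dual $\CS(\cA,\cB)^*$ equals the intersection of the duals $(\Pe{A\setminus\{\beta\},\beta})^*$ and $(\Po{A,\beta})^*$ over all (respectively all reduced) circuits; dualizing this identity and using $\CS(\cA,\cB)^{**}=\CS(\cA,\cB)$ from Proposition~\ref{prop:closed} immediately expresses $\CS(\cA,\cB)$ as the corresponding Minkowski sum of circuit cones. For part~(1) one first passes to $\CS(\cA',\cB')$ with $\cA'\cup\cB'=\supp f$ via Proposition~\ref{pr:nocancel}. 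No constructive splitting or induction is needed.

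Your direct route has concrete gaps. First, the reduction via~\eqref{eq:decomp} does not by itself respect $\supp f$: the individual AG summands in that decomposition may involve exponents that cancel in $f$, so ``the supports only shrink'' is unjustified without first invoking Proposition~\ref{pr:nocancel}. Second, and more seriously, your peeled-off function $g_\epsilon=\sum_{\alpha\in\supp(\lambda)}\epsilon\lambda_\alpha|\xb|^\alpha-\epsilon\Theta|\xb|^\gamma$ is \emph{not} non-negative in general: its circuit number with the unique barycentric $\lambda$ is $\prod_\alpha(\epsilon\lambda_\alpha/\lambda_\alpha)^{\lambda_\alpha}=\epsilon$, so non-negativity would force $\epsilon\ge\epsilon\Theta$, i.e.\ $\Theta\le 1$, which there is no reason to expect. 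The ``convexity'' splitting you sketch beforehand is actually the right idea and can be made rigorous (set $c_\alpha^{(j)}=c_\alpha\mu_j\lambda^{(j)}_\alpha/\lambda_\alpha$ and use the weighted AM/GM inequality to show that the resulting circuit numbers sum to at least $c_\gamma$), after which no peeling is needed; but as written neither half of your argument for~(1) stands on its own. For~(2) you are candid that the primal bookkeeping is the obstacle, and indeed rerunning the $(d)\Rightarrow(c)$ induction on coefficients rather than on dual vectors is delicate---the paper's dual argument sidesteps this entirely by getting~(2) for free from part~(d) of Theorem~\ref{thm:equiv}.
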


Note that in statement (2), the support of the reduced circuits does not need to be contained in the support of $f$. The following example shows a situation, in which this phenomenon happens.

\begin{example} \label{ex:extremebsp1}
	Let $\cA:=\{0,2,4\}$ and $\cB:=\{1\}$.
	Consider the non-negative circuit function $f = |x|^0 - 4\cdot3^{-3/4} x + |x|^4=1-4\cdot3^{-3/4} x + x^4$.
	Its support $(\set{0,4},1)$ is not reduced with respect to $\cA, \cB$, and indeed, we can write $f$ as sum
	\[\begin{aligned}
	f = & \left( \frac{2}{3} - 4\cdot3^{-3/4} x + \frac{2}{3}\sqrt{3} x^2\right) + \left(\frac{1}{3} - \frac{2}{3}\sqrt{3} x^2 + x^4\right)\\
	= & \left( \frac{2}{3}|x|^0 - 4\cdot3^{-3/4} x + \frac{2}{3}\sqrt{3} |x|^2\right) + \left(\frac{1}{3}|x|^0 - \frac{2}{3}\sqrt{3} |x|^2 + |x|^4\right)
	\end{aligned}\]
	of non-negative circuit functions, whose supports $(\set{0,2},1)$ and $(\set{0,4},2)$ are reduced.
	Note that the coefficient of $|x|^2$ cancels in the sum.
\end{example}

\begin{proof} [Proof of Proposition~\ref{prop:AGisscf}]
	By Lemma~\ref{lem:dualcone} and part (c) of Theorem~\ref{thm:equiv}, the dual of the $\cS$-cone is 
	\begin{equation}\label{eq:dual-even-odd}
		\CS(\cA, \cB)^* = \bigcap_{(A,\beta) \in I(\cA, \cA)} (\Pe{A\setminus\{\beta\}, \beta})^* \cap \bigcap_{(A,\beta) \in I(\cA, \cB)} (\Po{A, \beta})^*.
	\end{equation}
	Let $f \in \CS(\cA, \cB)$ and assume that the support of $f$ is given by $\cA' \subseteq \cA$ and $\cB' \subseteq B$.
	By Proposition~\ref{pr:nocancel}, $f \in \CS(\cA',\cB')$.
	Apply~\eqref{eq:dual-even-odd} on the sub-cone $\CS(\cA', \cB')$ and dualize that identity.
	Using that $\CS(\cA', \cB')^{**} = \CS(\cA', \cB')$ (because the cone is closed, Proposition~\ref{prop:closed}) then yields 
	\[
		f \in \sum_{(A,\beta) \in I(\cA, \cA)} \Pe{A\setminus\{\beta\}, \beta} + \sum_{(A,\beta) \in I(\cA, \cB)} \Po{A, \beta}.
	\]
	This shows part (1).

	Part (2) then follows from part (d) of Theorem~\ref{thm:equiv}. Note that in this case we cannot restrict the sets of exponents to $\cA'$ and $\cB'$ as it depends on the choice of $\cA$ and $\cB$ whether a circuit is reduced or not.
\end{proof}

\begin{remark}
	If we demand $\supp(f)=\cA\cup\cB$, we obtain the same statement about the support in (2) as in (1).
\end{remark}

\subsection{Extreme rays of the \texorpdfstring{$\cS$}{S}-cone}\label{sec:extremerays}

Our next application of our description of the dual cone is a precise characterization of the extreme rays of $\CS(\cA,\cB)$.
Even for the specific case of the SAGE cone, this sharpens the result in \cite[Theorem 4]{mcw-2018}, where the necessary condition is that every extreme ray of the SAGE cone is supported on a single coordinate or on a circuit.
The essential concept for this characterization is provided by the reduced circuits.

Let $\emptyset \neq \cA\subseteq\R^n$ and $\cB\subseteq\N^n\setminus(2\N)^n$ be finite sets and write shortly $\lambda = \lambda(A,\beta)$.
For $(A,\beta)\in I(\cA,\cA)$ let
\[
	E_{\mathrm{e}}(A,\beta) :=
	\left\{ \sum_{\alpha\in A} c_\alpha|\xb|^\alpha - \prod\limits_{\alpha \in A} \left(\frac{c_\alpha}{\lambda_\alpha}\right)^{\lambda_\alpha}|\xb|^\beta
\with c \in \R_{>0}^A \right\},
\]
for $(A,\beta)\in I(\cA,\cB)$ let
\[
	E_{\mathrm{o}}(A,\beta) :=
	\left\{\sum_{\alpha\in A} c_\alpha|\xb|^\alpha \pm \prod_{\alpha \in A} \left(\frac{c_\alpha}{\lambda_\alpha}\right)^{\lambda_\alpha}\xb^\beta \with c\in\R^A_{>0}\right\},
\]
and for $\beta\in\cA$ let
\[
	E_1(\beta) :=
	\begin{cases}
		\R_{+}\cdot |\xb|^\beta  & \text{ if } \beta\in \cA\setminus\cB,\\
		\R_+\cdot(|\xb|^\beta \pm \xb^\beta) & \text{ if }\beta\in \cA\cap\cB.
	\end{cases}
\]
$E_\mathrm{e}(A,\beta)$ and $E_\mathrm{o}(A,\beta)$ are the (even and odd) non-negative
circuit functions, for which the inequality~\eqref{eq:circuitnumber} on the circuit number
holds with equality. $E_1(\beta)$ provides the special case for circuits supported on a single
element.

\begin{proposition}\label{prop:exray}
	For finite sets $\emptyset \neq \cA\subseteq\R^n$ and $\cB\subseteq\N^n\setminus(2\N)^n$,
	the set $\cE(\cA,\cB)$ of extreme rays of $\CS(\cA,\cB)$ is
		\begin{align*}
		\cE(\cA,\cB)=\left(\bigcup_{\substack{(A,\beta) \in I(\cA, \cA),\\r_{\mathrm{e}}(A,\beta)=0, |A|>1}} E_{\mathrm{e}}(A,\beta)\right)\cup \left(\bigcup_{\substack{(A,\beta) \in I(\cA, \cB),\\r_{\mathrm{o}}(A,\beta)=0, |A|>1}} E_{\mathrm{o}}(A,\beta)\right)\cup \left(\bigcup_{\beta\in\cA} E_1(\beta)\right).
		\end{align*}
\end{proposition}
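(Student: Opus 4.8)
The plan is to prove the two inclusions $\cE(\cA,\cB) \subseteq \{\text{RHS}\}$ and $\{\text{RHS}\} \subseteq \cE(\cA,\cB)$ separately, using Proposition~\ref{prop:AGisscf}(2) for the first and the dual characterization Theorem~\ref{thm:equiv}(d) for the second. For the inclusion "every extreme ray lies in the listed set": let $f$ span an extreme ray of $\CS(\cA,\cB)$. By Proposition~\ref{prop:AGisscf}(2), $f$ is a sum of non-negative circuit functions supported on reduced circuits (even or odd) together with monomials $|\xb|^\beta$ and $|\xb|^\beta \pm \xb^\beta$ (the $E_1$ pieces, which come from the trivial "circuits" $|A|=1$). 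Since $f$ spans an extreme ray, $f$ must be a non-negative scalar multiple of exactly one such summand; so $f$ lies in one of $\Pe{A\setminus\{\beta\},\beta}$, $\Po{A,\beta}$ (reduced), or in an $E_1(\beta)$. It remains to show that within a fixed reduced circuit, only the circuit functions with equality in the circuit-number inequality~\eqref{eq:circuitnumber} are extreme — i.e.\ an arbitrary non-negative circuit function $\sum c_\alpha|\xb|^\alpha + d\,\xb^\beta$ (or the even analogue) with $|\Theta_f| > |d|$ (strict) is a sum of two non-proportional non-negative circuit functions on the same circuit: simply split $\sum c_\alpha|\xb|^\alpha = (1-t)\sum c_\alpha|\xb|^\alpha + t\sum c_\alpha|\xb|^\alpha$ and note that for $t$ small the first block alone still dominates $|d|\,|\xb|^\beta$ (its circuit number scales like $(1-t)$, still $> |d|$), and the second block is $t\sum c_\alpha|\xb|^\alpha \geq 0$; these two are not proportional since one has an inner-term coefficient and the other does not. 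Hence an extreme $f$ on a reduced circuit must satisfy $|d| = |\Theta_f|$, i.e.\ $f \in E_{\mathrm e}(A,\beta)$ or $f \in E_{\mathrm o}(A,\beta)$; the constraint $|A|>1$ is just because $|A|=1$ circuits are already accounted for by $E_1$.

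For the reverse inclusion I would show each listed generator is in fact extreme. Take $f \in E_{\mathrm e}(A,\beta)$ with $(A,\beta)$ a reduced even circuit, $|A|>1$, and suppose $f = g + h$ with $g,h \in \CS(\cA,\cB)$. The key idea is to exhibit a supporting hyperplane of the cone that touches exactly the ray $\R_+ f$. Concretely, build a dual functional $(v,w) \in \CS(\cA,\cB)^*$ with $(v,w)(f) = 0$ whose zero-set on the cone is precisely $\R_+f$: choose $v_\alpha = \left(\frac{c_\alpha}{\lambda_\alpha}\right)$-scaled quantities so that the AM/GM step in the proof of Lemma~\ref{lem:dualcone} (inequality~\eqref{eq:nonneg1}) is tight exactly at the coefficient vector of $f$, and the circuit-number inequality for $(A,\beta)$ holds with equality while all other (reduced) circuit inequalities in Theorem~\ref{thm:equiv}(d) hold strictly. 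Then $(v,w)(g) = (v,w)(h) = 0$ forces $g$ and $h$ to be supported only on $A \cup \{\beta\}$ and to make the same AM/GM inequality tight, which pins down $g$ and $h$ up to positive scalars of $f$. The odd case and the $E_1(\beta)$ case are analogous (for $E_1$, $\R_+|\xb|^\beta$ is extreme because $(v,w) = \mathbf e_\beta^{\,*} \in \CS(\cA,\cB)^*$ by Theorem~\ref{thm:equiv}(1), and its zero-set among non-negative AG functions with $v \geq 0$ is exactly the monomials with that exponent; the $\beta \in \cA\cap\cB$ case uses $(v,w)$ with $v_\beta = 1$, $w_\beta = \mp 1$).

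The main obstacle I anticipate is the reverse inclusion, specifically constructing the separating dual functional that is tight \emph{only} on the desired ray and verifying that strictness can be arranged for every other reduced circuit simultaneously. One has to be careful that a reduced even circuit may share its vertex set with other circuits (when $\beta$ lies in several relative interiors), and that $\beta \in \cA \cap \cB$ forces handling both $|\xb|^\beta$ and $\xb^\beta$ directions; a clean way around this is to perturb $v$ slightly away from the "equality" values in directions orthogonal to the binding constraint, using that the log-affine inequalities in Theorem~\ref{thm:equiv}(d) are strict on a relatively open set, so generic perturbations keep all non-binding constraints strict. A secondary subtlety is ensuring in the forward direction that the decomposition from Proposition~\ref{prop:AGisscf}(2) can be taken so that each summand is itself in $\CS(\cA,\cB)$ (not merely non-negative), which it is by construction of that proposition, so extremality of $f$ legitimately forces $f$ to be proportional to a single summand.
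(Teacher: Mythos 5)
Your forward inclusion (every extreme ray lies in the listed set) is essentially the paper's part (a): both reduce to Proposition~\ref{prop:AGisscf}(2) and then argue that a circuit function with slack in the circuit-number inequality decomposes nontrivially. Your ``split the outer part as $(1-t) + t$'' trick is a clean way to see this, though the condition you write, ``$|\Theta_f| > |d|$,'' is wrong for the even case: there an even circuit function with $d \geq \Theta_f > 0$ (so $|d| \geq |\Theta_f|$) is also non-extreme, but your split argument as actually carried out covers it, since it only needs $d > -\Theta_f$. This is a wording issue, not a logical one.

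The reverse inclusion is where you genuinely diverge from the paper and where there is a real gap. You propose to prove each listed generator is extreme by exhibiting an \emph{exposing} functional $(v,w) \in \CS(\cA,\cB)^*$ with $(v,w)(f) = 0$ whose zero-set on the cone is exactly $\R_+ f$, i.e., a $(v,w)$ for which the log-inequality of Theorem~\ref{thm:equiv}(d) is tight for the circuit $(A,\beta)$ carrying $f$ and strict for \emph{every other} reduced circuit (and for which $v_\gamma > 0$ on all remaining exponents). You do not construct this $(v,w)$; you only assert that ``generic perturbations'' keep non-binding constraints strict while the target constraint stays tight. But the values $v_\alpha = \lambda_\alpha/c_\alpha$ for $\alpha \in A$ and $v_\beta = 1/\Theta_f$ are forced, the other coordinates are unconstrained but must simultaneously satisfy a family of nonlinear constraints strictly, and you would also need to show that constraints involving only exponents in $A \cup \{\beta\}$ (including trivial circuits $(\{\gamma\},\gamma)$, where the inequality is an identity) never force another nontrivial equality. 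Establishing that a suitable $(v,w)$ exists is precisely proving these rays are \emph{exposed}, which is a stronger statement than extremality and requires a genuine argument. The paper instead takes an arbitrary decomposition $f = \sum_i f_i$ into non-negative AG functions, shows by inspecting vertices that $\bigcup_i \supp_{\mathrm e}(f_i) \subseteq \conv\supp_{\mathrm e}(f)$, invokes reducedness to confine all $f_i$ to $A \cup \{\beta\}$, and then applies H\"older's inequality (Theorem~\ref{thm:holder}) to the coefficient matrix $(c_\alpha^{(i)})$ to force rank one, hence all $f_i$ proportional. That H\"older / equality-case step is the heart of the extremality argument and has no counterpart in your sketch; to complete your approach you would need to replace the genericity appeal with an explicit construction and verification of $(v,w)$, or else revert to the paper's decomposition-plus-H\"older route.
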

Here, recall from Definition~\ref{de:reducedcircuit} that an even (respectively odd) circuit
is reduced if and only if $\re(A,\beta) = 0$ (respectively $\ro(A,\beta) = 0$).
The following example shows that the case distinctions are indeed necessary.

\begin{example} \label{ex:extreme1}
	For $\cA := \set{0,1,2}$ and $\cB := \set{1}$,
	the sets of (even resp. odd) circuits are  
	\begin{align*}
	I(\cA, \cA) &= \set{ (\set{0,2},1), (\set{0},0), (\set{1},1), (\set{2},2)} \text{ and }\\
	I(\cA, \cB) &= \{(\set{1},1),(\set{0,2},1)\}.
	\end{align*}
	We have a closer look at those elements which are both even and odd 
	circuits.
	\begin{itemize}
		\item[(1)] The circuit $(\set{0,2},1)$ is reduced as an even circuit and non-reduced as an odd circuit. In the context of extreme rays this is necessary. The even circuit function $a^2-2ab|x|+b^2x^2$ is an element of an extreme ray, but for the odd circuit function $a^2\pm2abx+b^2x^2$ we have
		\[ a^2\pm2abx+b^2x^2 = (a^2-2ab|x|+b^2x^2) + 2ab(|x|\pm x) \]
		and hence this is not an extreme ray of $\CS(\cA,\cB)$. 
		\item[(2)] Further, it holds that
		\[ |x| = \frac{1}{2}(|x|+x) + \frac{1}{2}(|x|-x), \]
		so $(\set{1}, 1)$ does not support an even extreme ray but in fact it does support an odd extreme ray.
	\end{itemize}
\end{example}

Again, we obtain corollaries for the special cases of 
SONC polynomials and of the
SAGE cone.

\begin{corollary}
	Let $\emptyset\ne \cA\subseteq \N^n$ be a finite set and write shortly $\lambda = \lambda(A,\beta)$. The set $\cE(\cA)$ of extreme rays of the cone of SONC-polynomials with support in $\cA$ is
	\begin{align*}
		\cE(\cA)= & \bigcup_{\substack{(A,\beta) \in I(\cA\cap (2\N)^n, \cA),\\r_{\mathrm{e}}(A,\beta)=0, \, |A|>1}} \left\{\sum_{\alpha\in A} c_\alpha x^\alpha - \prod_{\alpha \in A} \left(\frac{c_\alpha}{\lambda_\alpha}\right)^{\lambda_\alpha}x^\beta \ \vrule\  c\in\R^A_{>0}\right\} \\
	&	\cup   \bigcup_{\substack{(A,\beta) \in I(\cA\cap (2\N)^n, \cA),\\r_{\mathrm{o}}(A,\beta)=0, \, |A|>1, \, \beta\in\cA\setminus(2\N)^n }}
\left\{\sum_{\alpha\in A} c_\alpha x^\alpha + \prod_{\alpha \in A} \left(\frac{c_\alpha}{\lambda_\alpha}\right)^{\lambda_\alpha}x^\beta \ \vrule\  c\in\R^A_{>0} \right\}\\
	&	\cup  \bigcup_{\beta\in\cA\cap(2\N)^n} \R_{+}\cdot x^\beta.
	\end{align*}
\end{corollary}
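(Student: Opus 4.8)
The plan is to derive this corollary directly from Proposition~\ref{prop:exray} by specializing to the SONC setting, where $\cA_{\mathrm{SONC}} \subseteq \N^n$ and the SONC cone equals $\CS(\cA_{\mathrm{SONC}} \cap (2\N)^n, \cA_{\mathrm{SONC}} \setminus (2\N)^n)$ by Remark~\ref{rem:SONCequals}(2). So I would set $\cA := \cA_{\mathrm{SONC}} \cap (2\N)^n$ and $\cB := \cA_{\mathrm{SONC}} \setminus (2\N)^n$ in the notation of Proposition~\ref{prop:exray}, and then just translate the three families $E_{\mathrm e}(A,\beta)$, $E_{\mathrm o}(A,\beta)$, $E_1(\beta)$ into the polynomial language. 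The key simplification is that whenever $\alpha \in \cA = \cA_{\mathrm{SONC}} \cap (2\N)^n$ we have $|\xb|^\alpha = \xb^\alpha$, so all the absolute-value terms on the outer exponents disappear; the only place signs can matter is the inner exponent $\beta$.

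First I would handle the even circuits $I(\cA, \cA)$: since here both the outer set $A \subseteq (2\N)^n$ and the inner exponent $\beta \in (2\N)^n$, every $E_{\mathrm e}(A,\beta)$ consists of genuine polynomials $\sum_{\alpha\in A} c_\alpha \xb^\alpha - \Theta \xb^\beta$, giving the first union in the corollary (with $\re(A,\beta)=0$, $|A|>1$). Next, for the odd circuits $I(\cA,\cB)$, the outer exponents still lie in $(2\N)^n$ so $|\xb|^\alpha = \xb^\alpha$, but the inner exponent $\beta$ now ranges over $\cB = \cA_{\mathrm{SONC}}\setminus(2\N)^n$. A technical point: since $\cA \cap \cB = \emptyset$ in this specialization (an exponent is either in $(2\N)^n$ or not), none of the mixed even/odd overlap phenomena from Example~\ref{ex:extreme1} occur, and I should note this explicitly because it is what lets the $E_1(\beta)$ family collapse. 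Concretely, $\beta\in\cA$ forces $\beta\in\cA\setminus\cB$, so $E_1(\beta) = \R_+\cdot |\xb|^\beta = \R_+\cdot\xb^\beta$ (as $\beta$ is even), yielding the third union; and in the odd family $E_{\mathrm o}(A,\beta)$ the $\pm$ sign in front of the monomial $\xb^\beta$ becomes irrelevant up to the substitution $\xb \mapsto$ sign-flip in one coordinate of the support of $\beta$, but more carefully one observes that $-\Theta \xb^\beta$ and $+\Theta\xb^\beta$ generate the same ray after replacing $x_j$ by $-x_j$ for a coordinate $j$ with $\beta_j$ odd, so one may take the $+$ sign as a representative — this is why only $+\prod(\cdot)\xb^\beta$ appears in the second union, with the side condition $\beta \in \cA_{\mathrm{SONC}}\setminus(2\N)^n$ recorded as $\beta\in\cA\setminus(2\N)^n$ in the statement's notation.

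The only genuine content beyond bookkeeping is verifying that reducedness is measured against the right ground set: in Proposition~\ref{prop:exray} the quantities $\re, \ro$ are computed with respect to the pair $(\cA,\cB)$ used to define the $\cS$-cone, so in the corollary they must be computed with respect to $\cA_{\mathrm{SONC}}\cap(2\N)^n$ and $\cA_{\mathrm{SONC}}\setminus(2\N)^n$; I would just make sure the indices in the two big unions read $I(\cA\cap(2\N)^n, \cA)$ as written, which is exactly $I(\cA_{\mathrm{SONC}}\cap(2\N)^n, \cA_{\mathrm{SONC}})$ — here the second argument being all of $\cA_{\mathrm{SONC}}$ reflects that even circuits have their inner exponent in $\cA$ while odd circuits have it in $\cB$, and writing $I(\cA\cap(2\N)^n,\cA)$ with the reducedness flag $\re=0$ versus $\ro=0$ and the extra constraint $\beta\in\cA\setminus(2\N)^n$ correctly separates the two cases. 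The main (mild) obstacle is thus purely notational: keeping straight which ``$\cA$'' is the SONC support and which is the $\cS$-cone's even-exponent set, and confirming the $\pm/+$ reduction for odd inner exponents; there is no new mathematical difficulty, and the proof is essentially one sentence invoking Remark~\ref{rem:SONCequals}(2) and Proposition~\ref{prop:exray} followed by the substitutions above.
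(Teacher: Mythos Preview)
Your overall approach---specialize Proposition~\ref{prop:exray} via Remark~\ref{rem:SONCequals}(2) with $\cA' = \cA\cap(2\N)^n$ and $\cB' = \cA\setminus(2\N)^n$, note $\cA'\cap\cB'=\emptyset$, and rewrite $|\xb|^\alpha=\xb^\alpha$ for $\alpha\in(2\N)^n$---is exactly what the paper intends, and most of your bookkeeping is fine. But your treatment of the sign in $E_{\mathrm o}(A,\beta)$ contains a genuine error.

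You argue that for odd $\beta$ the polynomials $\sum_{\alpha\in A}c_\alpha \xb^\alpha + \Theta\,\xb^\beta$ and $\sum_{\alpha\in A}c_\alpha \xb^\alpha - \Theta\,\xb^\beta$ ``generate the same ray'' after a coordinate sign flip $x_j\mapsto -x_j$, and that this is why only the $+$ sign appears in the second union. This is false: the sign flip is an automorphism of the cone, so it maps extreme rays to extreme rays, but the two polynomials have different coefficient vectors and hence span \emph{distinct} rays in $\R[\cA',\cB']$. Both are extreme, and both must appear in the description. If your argument were correct, the corollary would be missing all the ``$-$'' rays for odd $\beta$.

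The actual reason only $+$ appears in the second union is that the ``$-$'' half of $E_{\mathrm o}(A,\beta)$ is already absorbed into the \emph{first} union. Note that the first union in the corollary is indexed by $(A,\beta)\in I(\cA\cap(2\N)^n,\cA)$ with $\beta$ ranging over \emph{all} of $\cA$, not only the even part---so for odd $\beta$ it contributes exactly $\sum c_\alpha\xb^\alpha - \Theta\,\xb^\beta$. The packaging works because in the SONC specialization $\beta\notin\cA'$ whenever $\beta$ is odd, hence
\[
\re(A,\beta)=\big|(\conv(A)\setminus(A\cup\{\beta\}))\cap\cA'\big|=\big|(\conv(A)\setminus A)\cap\cA'\big|=\ro(A,\beta),
\]
so the reducedness flag $\re=0$ in the first union coincides with $\ro=0$ for such $\beta$. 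Your final paragraph gestures at this (``correctly separates the two cases''), but it is inconsistent with your earlier claim that the first union has $\beta\in(2\N)^n$; fix this and drop the sign-flip justification entirely.
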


\begin{corollary}
	Let $\emptyset \neq \cA \subseteq \R^n$ be a finite set, $\cB=\emptyset$ and write $\lambda = \lambda(A,\beta)$. The set $\cE(\cA,\cB)$ of extreme rays of the cone of SAGE functions with support in $\cA$ is 
	\begin{align*}
	\cE(\cA,\cB)= \ &  \bigcup_{\substack{(A,\beta) \in I(\cA, \cA),\\r_{\mathrm{e}}(A,\beta)=0, \, |A|>1}} E_{\mathrm{e}}(A,\beta) \ \cup \ \bigcup_{\beta\in\cA} E_1(\beta)\\
	= \ & \bigcup_{\substack{(A,\beta) \in I(\cA, \cA),\\r_{\mathrm{e}}(A,\beta)=0, \, |A|>1}} \left\{\sum\limits_{\alpha\in A} c_\alpha\exp(y^T\alpha) - \prod\limits_{\alpha \in A} \left(\frac{c_\alpha}{\lambda_\alpha}\right)^{\lambda_\alpha}\exp(y^T\beta) \with c \in \R^\cA_{>0}\right\}\\
	& \ \cup  \ \bigcup_{\beta\in\cA} \left\{c \exp(y^T\beta) \with c\in \R_+ \right\}.
	\end{align*}
\end{corollary}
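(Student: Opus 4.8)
The plan is to obtain this corollary as a direct specialization of Proposition~\ref{prop:exray} to the case $\cB=\emptyset$, followed by a change of notation via the exponential substitution. First I would record what the three unions in Proposition~\ref{prop:exray} become when $\cB=\emptyset$. A circuit $(A,\beta)\in I(\cA,\cB)$ requires $\beta\in\cB$, so $I(\cA,\emptyset)=\emptyset$ and the middle union (over the odd circuit ray families $E_{\mathrm{o}}(A,\beta)$) is empty. Moreover, for every $\beta\in\cA$ we have $\cA\cap\cB=\emptyset$, hence $\beta\in\cA\setminus\cB$, so the case distinction defining $E_1(\beta)$ collapses to $E_1(\beta)=\R_+\cdot|\xb|^\beta$. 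This already gives
\[
	\cE(\cA,\emptyset)=\bigcup_{\substack{(A,\beta)\in I(\cA,\cA),\\ \re(A,\beta)=0,\ |A|>1}} E_{\mathrm{e}}(A,\beta)\ \cup\ \bigcup_{\beta\in\cA}\R_+\cdot|\xb|^\beta,
\]
which is the first displayed equality in the corollary.

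Next I would transport this identity to signomials using the substitution $|x_i|=\exp(y_i)$ from Remark~\ref{rem:SONCcone}(2), under which $|\xb|^\alpha=\exp(\alpha^T y)$. Since the family $\{|\xb|^\alpha : \alpha\in\cA\}$ is linearly independent in $\R[\cA,\emptyset]$ and the corresponding exponentials $\{\exp(\alpha^T y):\alpha\in\cA\}$ are linearly independent as well, the assignment $|\xb|^\alpha\mapsto\exp(\alpha^T y)$ extends to a linear isomorphism between $\R[\cA,\emptyset]$ and the space of signomials with frequency set $\cA$. By Remark~\ref{rem:SONCequals}(1) this isomorphism carries $\CS(\cA,\emptyset)$ onto $C_{\mathrm{SAGE}}(\cA)$. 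A linear isomorphism of finite-dimensional vector spaces maps the extreme rays of a cone bijectively onto the extreme rays of its image, so the extreme rays of $C_{\mathrm{SAGE}}(\cA)$ are exactly the images under this substitution of the ray families in the display above. Rewriting $\sum_{\alpha\in A}c_\alpha|\xb|^\alpha-\prod_{\alpha\in A}(c_\alpha/\lambda_\alpha)^{\lambda_\alpha}|\xb|^\beta$ as $\sum_{\alpha\in A}c_\alpha\exp(y^T\alpha)-\prod_{\alpha\in A}(c_\alpha/\lambda_\alpha)^{\lambda_\alpha}\exp(y^T\beta)$, and $\R_+\cdot|\xb|^\beta$ as $\{c\exp(y^T\beta):c\in\R_+\}$, yields precisely the second displayed equality.

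I do not expect a real obstacle here, since all of the substance is already contained in Proposition~\ref{prop:exray}: what remains is the bookkeeping of the $\cB=\emptyset$ specialization together with the standard fact that extreme rays are preserved under a linear isomorphism of cones. The only point meriting an explicit sentence is that the exponential substitution is a genuine bijection of the ambient vector spaces (hence of the cones), not merely a surjection — this is what guarantees that no extreme rays are created or destroyed — and this follows from the linear independence of $\{\exp(\alpha^T y):\alpha\in\cA\}$ noted above. (The companion corollary for SONC polynomials is proved in the same way, specializing Proposition~\ref{prop:exray} to $\cA\subseteq(2\N)^n$ and keeping $\cB$, so that $\cB$ contributes the odd circuits with odd inner exponent and $|\xb|^\gamma=\xb^\gamma$ for $\gamma\in(2\N)^n$.)
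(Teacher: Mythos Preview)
Your proposal is correct and matches the paper's approach: the corollary is stated without proof in the paper, as it is an immediate specialization of Proposition~\ref{prop:exray} to $\cB=\emptyset$ together with the exponential substitution of Remark~\ref{rem:SONCcone}(2). Your explicit verification that $I(\cA,\emptyset)=\emptyset$ and that $E_1(\beta)$ collapses to $\R_+\cdot|\xb|^\beta$, plus the observation that linear isomorphisms preserve extreme rays, is exactly the bookkeeping the paper leaves implicit.
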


\begin{example}
	As an example corresponding to the SAGE setting,
	let $\cA = \set{0,1,2,4}, \cB=\emptyset$ and $f:=|x|^0 - 4\cdot3^{-3/4} x^1 + |x|^4$ be a non-negative circuit function. With the substitution $x\mapsto \exp(y)$ we obtain the arithmetic-geometric exponential $\tilde{f} = 1 - 4\cdot3^{-3/4} \exp(y) + \exp(4y)$, and its support is again not reduced.
	We write $f$ as a sum
	\[ f = \left(1 - 2\cdot3^{1/4} |x| + \sqrt{3} x^2\right) + \left(\frac{2}{3}3^{1/4} |x| - \sqrt3x^2 + x^4 \right) \]
	of circuit functions, whose supports $\set{0,1,2}$ and $\set{1,2,4}$ are reduced.
	
	This is different from Example~\ref{ex:extremebsp1} in that the exponent $1$ is contained in $\cA$ rather than $\cB$ and thus is treated like an even number in the SAGE setting.
	
	In \cite{mcw-2018}, after Theorem 4, the authors remark that every
	circuit \enquote{supports a family of extreme rays in the SAGE cone.}
	This is not quite correct, as shown by the current example.
\end{example}

For the proof of Proposition~\ref{prop:exray}, we will use a variant of H\"older's inequality.

\begin{theorem}[Theorem 11, p. 22, \cite{HLP}]\label{thm:holder}
	Let $n, m \in \N$.
	Let $(a_{ij}) \in \R^{n\times m}$ be a matrix and let $\lambda_1, \dotsc, \lambda_{n} \in \R_{>0}$ with $\sum_{i=1}^n \lambda_{i} = 1$.
	Then 
	\[ 
	\sum_{j = 1}^{m} \prod_{i = 1}^{n} a_{i j}^{\lambda_{i}} 
	\leq \prod_{i = 1}^{n}\left(\sum_{j = 1}^{m} a_{i j} \right)^{\lambda_{i}},
	\]
	and equality holds if and only if either (1) for some $i$, $a_{i1} = \dotsb = a_{im} = 0$, or (2) the matrix  $(a_{ij})$ has rank one.
\end{theorem}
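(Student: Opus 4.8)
The statement is the generalized Hölder inequality for non-negative matrices (for negative entries the powers $a_{ij}^{\lambda_i}$ are not defined, so the hypothesis is read as $a_{ij}\ge 0$; the case $n=2$ is the familiar Hölder inequality). The plan is to give the standard self-contained derivation from the weighted arithmetic--geometric mean inequality, with care for the equality case.

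First I would dispose of the degenerate situation. If some row, say row $i_0$, is identically zero ($a_{i_0 j}=0$ for all $j$), then every product $\prod_{i}a_{ij}^{\lambda_i}$ contains the factor $a_{i_0 j}^{\lambda_{i_0}}=0$ (here $\lambda_{i_0}>0$), so the left-hand side is $0$; the right-hand side also vanishes because $\bigl(\sum_j a_{i_0 j}\bigr)^{\lambda_{i_0}}=0$. Hence both sides are $0$ and we are in equality alternative~(1). From now on I assume all row sums $S_i:=\sum_{j=1}^m a_{ij}$ are strictly positive, whence $\prod_{i=1}^n S_i^{\lambda_i}>0$.

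The heart of the argument is a columnwise application of AM--GM after normalising. Dividing the asserted inequality by $\prod_i S_i^{\lambda_i}$ and writing $t_{ij}:=a_{ij}/S_i\ge 0$, so that $\sum_{j=1}^m t_{ij}=1$ for each fixed $i$, the claim becomes $\sum_{j=1}^m\prod_{i=1}^n t_{ij}^{\lambda_i}\le 1$. For each fixed column $j$, the weighted AM--GM inequality $\prod_{i}x_i^{\lambda_i}\le\sum_i\lambda_i x_i$ (valid for $x_i\ge 0$, $\lambda_i>0$, $\sum_i\lambda_i=1$; it follows from concavity of $\ln$, i.e.\ Jensen's inequality, with the obvious reading when some $x_i=0$) applied to $x_i=t_{ij}$ gives $\prod_{i}t_{ij}^{\lambda_i}\le\sum_i\lambda_i t_{ij}$. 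Summing over $j$ and swapping the two finite sums, $\sum_j\prod_i t_{ij}^{\lambda_i}\le\sum_i\lambda_i\sum_j t_{ij}=\sum_i\lambda_i=1$, which is the inequality.

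For the equality statement, equality above is possible only if $\prod_i t_{ij}^{\lambda_i}=\sum_i\lambda_i t_{ij}$ for \emph{every} $j$. I would recall (or reprove in one line from strict concavity of $\ln$, with a separate check that a vanishing term forces all terms in that column to vanish) that the equality case of weighted AM--GM with positive weights is exactly $t_{1j}=t_{2j}=\dotsb=t_{nj}$. Thus equality in the theorem holds iff for each $j$ there is a scalar $r_j\ge 0$ with $t_{ij}=r_j$ for all $i$, equivalently $a_{ij}=S_i r_j$, equivalently the matrix $(a_{ij})$ factors as a product of a column vector by a row vector, i.e.\ has rank at most one. The converse is routine: a non-negative matrix of rank $\le 1$ with positive row sums is of this form (a rank-one non-negative matrix is $uv^T$ with $u,v\ge 0$, and then $t_{ij}=v_j/\sum_k v_k$ does not depend on $i$), and a non-negative rank-$0$ matrix is the all-zero matrix, which is alternative~(1). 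So the equality cases are precisely (1) and (2). The only real obstacle is bookkeeping: correctly handling zero entries in the AM--GM inequality and in its equality case, and identifying ``$t_{ij}$ independent of $i$ for all $j$'' with the rank condition in the statement.
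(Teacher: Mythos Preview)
Your proof is correct and is the standard derivation of the generalized H\"older inequality via columnwise weighted AM--GM after row-normalisation. Note, however, that the paper does not give its own proof of this statement: it is quoted as Theorem~11 of Hardy--Littlewood--P\'olya and used as a black box in the proof of Proposition~\ref{prop:exray}, so there is nothing to compare against.
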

Note that in case (1) both sides of the inequality are zero.

\begin{proof}[Proof of Proposition~\ref{prop:exray}]
By Proposition~\ref{prop:AGisscf}, every non-negative function
$f \in \CS(\cA,\cB)$ can be written as a sum of non-negative circuit
functions supported on reduced circuits.
Hence, it suffices to show the following two statements:
	
\begin{enumerate}[(a)]
	\item Every non-negative circuit function supported on a circuit can be written as a sum of non-negative circuit functions with the same support whose circuit condition is satisfied with equality.
	\item Every function in $\cE(\cA,\cB)$ is indeed an extreme ray, i.e., it cannot be written as a sum of other non-negative AG functions.
\end{enumerate}
\begin{asparaenum}[(a)]
	\item Let $f$ be a non-negative circuit function supported on the circuit $(A, \beta)$, whose coefficients are denoted by $(c_\alpha)_{\alpha\in A}$ and $c_{\beta}$.

	If $f$ is supported on an odd circuit $(A, \beta) \in I(\cA, \cB)$, then the circuit number $\Theta_f$ from~\eqref{eq:circuitnumber} satisfies $c_\beta \in [-\Theta_f,\Theta_f]$.
	Hence, $f$ is a convex combination of $f_1$ and $f_2$, where $f_1,f_2$ have the same support and coefficients as $f$, except for $c_\beta^{(1)}=\Theta_{f_1}=\Theta_f$ and $c_\beta^{(2)}=-\Theta_{f_2}=-\Theta_f$.

	If $f$ is supported on an even circuit $(A, \beta) \in I(\cA, \cA)$, then $f$ can be written as the sum of a non-negative circuit function with the same support whose inner coefficient equals the negative of the circuit number and of some function $d|\xb|^\beta$ for $d>0$. 
	If $\beta\notin\cB$, then the latter is contained in $E_1(\beta)$.
	Otherwise, if $\beta\in\cB$, then $d|\xb|^\beta=\frac{d}{2}(|\xb|^\beta+\xb^\beta)+\frac{d}{2}(|\xb|^\beta-\xb^\beta) $, whose two summands are elements of $E_1(\beta)$.

\newcommand{\suppe}{\supp_{\mathrm{e}}}
	\item Let $f \in \cE(\cA, \cB)$ with coefficients $(c_\alpha)_{\alpha \in \cA}$ and $(d_\beta)_{\beta\in\cB}$. 
	Assume that $f$ can be decomposed into $f=\sum_{i=1}^{k}f_i$ with non-negative AG functions $f_1,\ldots,f_k \in \R[\cA, \cB]$.
	Denote the coefficients of $f_i$ by $(c_\alpha^{(i)})_{\alpha \in \cA}$ and $(d_\beta^{(i)})_{\beta\in\cB}$.

	For the duration of this proof, we use the notation $\suppe(f) := \set{\alpha\in\cA \with c_\alpha \neq 0}$.
	Moreover, set $\tilde{A} := \bigcup_i \suppe(f_i) = \set{\alpha\in\cA \with \exists i\colon c_\alpha^{(i)} \neq 0}$.
	We claim that $\tilde{A} \subseteq \conv \suppe(f)$.
	
	To show this, we consider a vertex $\tilde{\alpha}$ of $\conv\tilde{A}$.
	Since $\tilde{\alpha}$ must be an outer exponent of each $f_i$ with $c_{\tilde{\alpha}}^{(i)} \neq 0$, we have $c_{\tilde{\alpha}}^{(i)} \geq 0$ for all $i$.
	It follows that $\sum_{i=1}^k c_{\tilde{\alpha}}^{(i)} > 0$ and thus ${\tilde{\alpha}} \in \suppe(f)$.
	As this holds for every vertex of $\conv\tilde{A}$, we obtain that $\tilde{A}\subseteq \conv \suppe(f)$.

	Next, we distinguish three cases depending on whether $f \in E_1(\beta)$, $f \in E_{\mathrm{e}}(A, \beta)$ or $f \in E_{\mathrm{o}}(A, \beta)$.
\smallskip
	\begin{asparaitem}
	\item[\emph{Case $f \in E_1(\beta)$, $\beta \in \cA$}:]
		In this case, $\suppe(f_i) = \set{\beta}$ for each $i$.
		Thus, if $\beta \notin \cB$ then each $f_i$ is a multiple of $|\xb|^\beta$ and thus a multiple of $f$.
		
		On the other hand, if $\beta \in \cA \cap \cB$, then w.l.o.g. we can assume that $f = c(|\xb|^\beta + \xb^\beta)$ for some $c > 0$.
		Moreover, each $f_i$ is of the form $f_i = c_i |\xb|^\beta + d_i \xb^\beta$ with $|d_i| \leq c_i$.
		Then
		\[ \sum_i d_i = c = \sum_i c_i \geq \sum_i |d_i| \geq \Big|\sum_i d_i\Big| \]
		and thus $d_i = c_i$ for each $i$.
		Hence, all $f_i$ are multiples of $f$.

	\item[\emph{Case $f \in E_\mathrm{e}(A, \beta)$ for $(A, \beta) \in I(\cA, \cA)$ with $\re(A,\beta) = 0$ and $|A| > 1$}:]
	     In this case, our initial considerations imply that $\bigcup_i \suppe(f_i) \subseteq \conv(A)$.
		Since $(A, \beta)$ is reduced we can also conclude that $\bigcup_i \suppe(f_i) \subseteq A \cup\set{\beta}$.
		Hence, each $f_i$ is of the form
		\begin{equation}\label{eq:fi}
			f_i = \sum_{\alpha \in A} c^{(i)}_\alpha |\xb|^\alpha + c^{(i)}_\beta |\xb|^\beta + \sum_{\beta' \in \cB} d^{(i)}_{\beta'} \xb^\beta.
		\end{equation}
		It follows that $c^{(i)}_\alpha \geq 0$ for all $i$ and $\alpha \in A$, because otherwise the $f_i$ cannot be non-negative.
		
	Next, we claim that
	\begin{equation}\label{eq:aux}
	 -c^{(i)}_\beta \leq \prod_{\alpha\in A} \left(\frac{c_\alpha^{(i)}}{\lambda_\alpha}\right)^{\lambda_\alpha} 
	\end{equation}
	for all $i$, where again we write $\lambda = \lambda(A, \beta)$.
	To prove the claim, we distinguish two cases:
	\begin{enumerate}[(i)]
	\item If $c^{(i)}_\beta \geq 0$, then it trivially holds that 
	$-c^{(i)}_\beta \leq 0 \leq \prod_{\alpha\in A} ({c_\alpha^{(i)}}/{\lambda_\alpha})^{\lambda_\alpha}$.
	\item Consider the case that $c^{(i)}_\beta < 0$.
		Since $f_i$ is a non-negative AG function, it holds that
		the last sum in \eqref{eq:fi} vanishes and the claim follows from Theorem~\ref{thm:oddImplication}(3).
	\end{enumerate}

	In the next step, we derive
		\begin{equation}\label{eq:holder}
			-c_\beta
			= - \sum_{i =1}^k c_\beta^{(i)}
			\overset{\text{(a)}}{\leq} \sum_{i =1}^k \prod_{\alpha\in A} \left(\frac{c_\alpha^{(i)}}{\lambda_\alpha}\right)^{\lambda_\alpha}
			\overset{\mathrm{(b)}}{\leq}\prod_{\alpha\in A} \left(\sum_{i=1}^{k}\frac{c_\alpha^{(i)}}{\lambda_\alpha}\right)^{\lambda_\alpha}
			\overset{\mathrm{(c)}}{=} 
			 \prod_{\alpha\in A} \left(\frac{c_\alpha}{\lambda_\alpha}\right)^{\lambda_\alpha}
			= -c_\beta,
		\end{equation}
		
		where in (a) we use \eqref{eq:aux}, (b) follows from H\"older's Inequality~\ref{thm:holder} and (c) uses that $\sum_{i=1}^k c_{\alpha}^{(i)} = c_{\alpha}$.
		Moreover, by Theorem~\ref{thm:holder} equality in (b) implies that either (1) there exists an $\alpha \in A$ such that $c_\alpha^{(i)}$ vanishes for all $i$,
		or (2) the $|A| \times k$ matrix with entries $c^{(i)}_\alpha / \lambda_\alpha$ has rank one.
		However, (1) would imply that $c_\beta = 0$ which is impossible, thus we are in case (2).
		Hence, there exist scalars $\epsilon_1, \dotsc, \epsilon_k \geq 0$ such that $c^{(i)}_\alpha = \epsilon_i c_\alpha$ for all $i$ and all $\alpha \in A$.
		Further, equality in (a) implies that
		\[
		 -c^{(i)}_\beta = 
		\prod_{\alpha\in A} \left(\frac{c_\alpha^{(i)}}{\lambda_\alpha}\right)^{\lambda_\alpha}
		= \prod_{\alpha\in A} \left(\epsilon_i\frac{c_\alpha}{\lambda_\alpha}\right)^{\lambda_\alpha}
		= - \epsilon_i c_\beta.
		 \]
		By \eqref{eq:fi}, it follows that every $f_i$ is of the form
		\begin{equation}\label{eq:epsilon} 
		f_i = \epsilon_i f + \text{terms in }\cB. 
		\end{equation}
		Now, if $\epsilon_i = 0$ for some $i$, then $f_i$ has only terms with exponents in $\cB$ and thus it is the zero function or it cannot be non-negative. It follows that $\epsilon_i > 0$ for all $i$.
		But this implies that $c^{(i)}_\beta = \epsilon_i c_\beta < 0$.
		Hence, since the $f_i$ are AG functions, they cannot have any other terms in $\cB$, and thus they are all multiples of $f$.

	\item[\emph{Case $f \in E_\mathrm{o}(A, \beta)$ for $(A, \beta) \in I(\cA, \cB)$ with $\ro(A,\beta) = 0$ and $|A| > 1$}:] 
	In this case, the argument is similar, except that \eqref{eq:holder} becomes
		\begin{equation}\label{eq:holder2}
			|d_\beta|
			= \left|\sum_{i =1}^k d_\beta^{(i)}\right|
			\overset{\text{(e)}}{\leq} \sum_{i =1}^k |d_\beta^{(i)}|
			\leq \sum_{i =1}^k \prod_{\alpha\in A} \left(\frac{c_\alpha^{(i)}}{\lambda_\alpha}\right)^{\lambda_\alpha}
			\leq\prod_{\alpha\in A} \left(\sum_{i=1}^{k}\frac{c_\alpha^{(i)}}{\lambda_\alpha}\right)^{\lambda_\alpha}
			=
	 \prod_{\alpha\in A} \left(\frac{c_\alpha}{\lambda_\alpha}\right)^{\lambda_\alpha}
			= |d_\beta|.
			\end{equation}
		Since we have equality in (e), it follows that all terms on the left-hand side of that triangle inequality have the same sign.
		Since $d_\beta = \sum_{i=1}^k d^{(i)}_\beta$, this implies that each $d^{(i)}_\beta$ has the same sign as $d_\beta$.
		Now we also obtain \eqref{eq:epsilon}.
		Note that for the vanishing of the terms with exponents in $\cB\setminus\set{\beta}$, we can argue as above, or alternatively obtain this directly from the oddness of the $f_i$.
		Altogether, this yields again that all the $f_i$ are multiples of $f$.
		\qedhere
	\end{asparaitem}
	\end{asparaenum}
\end{proof}

\subsection{Univariate SONC polynomials do not satisfy Putinar's Positivstellensatz}\label{se:putinar}

In \cite[Section 5]{DKdW}, a multivariate example was given to show that the analogue of Putinar's Positivstellensatz does not hold for SONC polynomials. Using Theorem~\ref{thm:equiv}, we provide a simpler example of this phenomenon, which in addition shows that the analogue of Putinar's result does not even hold for univariate SONC polynomials.

Recall Putinar's Theorem from the theory of sums of squares polynomials (\cite{putinar-1993}, see also, e.g., \cite[Theorem 2.14]{lasserre-positive-polynomials}).

\begin{theorem}
	Let $f, g_1, \ldots, g_m \in \R[\xb]$ and assume that the quadratic module
	\[
		Q(g_1, \ldots, g_m) := \left\{ p_0 + \sum_{j=1}^m p_j g_j
		\text{ with sums of squares polynomials } p_0, \ldots, p_m \right\}
	\]
	is Archimedean.
	If $f \in \R[\xb]$ is strictly positive on the set $K = \{\xb \in \R^n \, : \, g_j(\xb) \ge 0, \, 1 \le j \le m\}$, then $f$ can be written in the form $f = p_0 + \sum_{j=1}^m p_j g_j$ with sum of squares polynomials $p_0, \ldots, p_m$.
\end{theorem}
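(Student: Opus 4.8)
The plan is to run the classical operator-theoretic proof of Putinar, which is the most transparent way to exploit the Archimedean hypothesis; it proceeds through a Gelfand--Naimark--Segal construction and the spectral theorem for commuting bounded self-adjoint operators. Write $Q := Q(g_1,\dots,g_m)$ and suppose, toward a contradiction, that $f$ is strictly positive on $K$ but $f \notin Q$. First I would note that the Archimedean condition makes $1$ an algebraic interior point of the convex cone $Q \subseteq \R[\xb]$: for every $p \in \R[\xb]$ there is an $N$ with $N \pm p \in Q$, hence $1 \pm \tfrac1N p \in Q$. Since $Q$ need not be closed, I would then invoke a version of the Hahn--Banach separation theorem requiring only an algebraic interior point to separate $f$ from $Q$ by a nonzero linear functional $L \colon \R[\xb] \to \R$ with $L \ge 0$ on $Q$ and $L(f) \le 0$; because $1$ is an algebraic interior point and $L \neq 0$, one gets $L(1) > 0$, and we rescale so that $L(1) = 1$.

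Next I would build a Hilbert space from $L$. Since every square lies in $Q$, the form $\langle p, q \rangle := L(pq)$ is positive semidefinite; quotienting $\R[\xb]$ by its kernel and completing yields a Hilbert space $H$ containing $\xi := [1]$, and multiplication by $x_i$ descends to a symmetric operator $M_i$ on the dense subspace of polynomial classes. Here the Archimedean hypothesis does the essential work: from $N - \sum_j x_j^2 \in Q$ one gets $N - x_i^2 \in Q$, so $N\, L(p^2) \ge L(x_i^2 p^2) = \|M_i[p]\|^2$, whence each $M_i$ is bounded with $\|M_i\| \le \sqrt N$ and extends to a bounded self-adjoint operator on $H$; the $M_i$ pairwise commute because $L$ is symmetric.

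I would then apply the spectral theorem for the commuting bounded self-adjoint operators $M_1,\dots,M_n$ to obtain a projection-valued measure $E$ on $\R^n$, supported in $[-\sqrt N,\sqrt N]^n$, with $M_i = \int x_i\, dE$. Setting $\mu := \langle E(\cdot)\xi,\xi\rangle$, a Borel probability measure, and using $M^\alpha \xi = [\xb^\alpha]$ for every monomial, one obtains $L(p) = \langle p(M)\xi,\xi\rangle = \int p\, d\mu$ for all $p \in \R[\xb]$. For each $j$, membership $g_j p^2 \in Q$ gives $\int g_j\, p^2\, d\mu \ge 0$ for all polynomials $p$, forcing $g_j \ge 0$ $\mu$-almost everywhere, so $\supp \mu \subseteq K$. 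Finally $L(f) = \int_K f\, d\mu > 0$ since $f > 0$ on $K$ and $\mu \neq 0$, contradicting $L(f) \le 0$; hence $f \in Q$.

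The step I expect to be the main obstacle is the spectral-theoretic one: verifying that the $M_i$ are genuine simultaneously diagonalizable bounded self-adjoint operators, transporting $L$ to integration against the scalar measure $\mu$, and checking $\supp \mu \subseteq K$ (which uses $g_j p^2 \in Q$ for all $p$, not just $g_j \in Q$). The separation step and the GNS construction are comparatively soft. An alternative is to invoke Haviland's theorem on the $K$-moment problem instead of the Hilbert-space construction, but establishing Haviland's positivity hypothesis on $K$ essentially redoes the same spectral argument, so there is no genuine shortcut.
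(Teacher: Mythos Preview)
The paper does not prove this statement at all: Putinar's Positivstellensatz is quoted as background from \cite{putinar-1993} and \cite{lasserre-positive-polynomials}, solely to set up the contrast with Theorem~\ref{thm:noputinar}. There is therefore no ``paper's own proof'' to compare your proposal against.

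That said, your outline is a correct rendering of the classical operator-theoretic proof. The ingredients are all in place: Eidelheit--Kakutani separation using that $1$ is an algebraic interior point of $Q$ (from the Archimedean property), the GNS Hilbert space built from $L$, boundedness of the multiplication operators $M_i$ from $N - x_i^2 \in Q$, the spectral theorem for commuting bounded self-adjoint operators to produce the representing measure $\mu$, and the support argument $\supp\mu \subseteq K$ via $\int g_j p^2\, d\mu \ge 0$. One small point you glossed over: concluding $g_j \ge 0$ $\mu$-almost everywhere from nonnegativity of $\int g_j p^2\, d\mu$ for all polynomials $p$ uses that polynomials are dense in $L^2(\mu)$, which holds because $\mu$ has compact support; this is routine but worth stating. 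Otherwise the argument is sound.
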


Here, the Archimedean condition can be defined by the existence of some $N \ge 1$ with $N-\sum_{i=1}^n x_i^2 \in Q(g_1, \ldots, g_m)$, and it is well known that $Q(g_1, \ldots, g_m)$ is Archimedean if $g_1, \ldots, g_m$ are affine (see, e.g., \cite{lasserre-positive-polynomials}).

\begin{theorem}\label{thm:noputinar}
	The univariate polynomial
	\[ f := \left(x - \frac{1}{2}\right)^4 + \frac{1}{1000} \]
	satisfies $f(x) > 0$ for $x \in [0,1]$, but it cannot be written in the form 
	\begin{equation} \label{eq:putinar1}
		p_0 + x p_1 + (1-x) p_2 + x(1-x) p_3 
	\end{equation}
	with SONC polynomials $p_0, p_1, p_2$ and $p_3$.
\end{theorem}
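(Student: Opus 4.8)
The plan is to argue by contradiction using the dual cone. Suppose $f = p_0 + xp_1 + (1-x)p_2 + x(1-x)p_3$ with univariate SONC polynomials $p_0,\dots,p_3$, write $C\subseteq\R[x]$ for the cone of univariate SONC polynomials, and set $K := C + xC + (1-x)C + x(1-x)C$, the cone of all polynomials of the form~\eqref{eq:putinar1}. Since $(A+B)^* = A^*\cap B^*$ for cones, $K^* = C^*\cap(xC)^*\cap((1-x)C)^*\cap(x(1-x)C)^*$, where $(gC)^* = \{L : (p\mapsto L(gp)) \in C^*\}$. Any $L\in K^*$ satisfies $L(f)\ge 0$, so it suffices to produce a single linear functional $L$ on $\R[x]$ lying in $K^*$ with $L(f) < 0$.

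The second step is to turn these four membership conditions into explicit inequalities on the moment sequence $a_k := L(x^k)$ via Theorem~\ref{thm:equiv}. In one variable every affinely independent set has at most two elements, and the reduced circuits are exactly the consecutive ones $(\{2i,2i+2\},\,2i+1)$ and $(\{2i,2i+4\},\,2i+2)$; hence, applying part~(d) of Theorem~\ref{thm:equiv} to all finite truncations, a functional with moment sequence $(m_k)_{k\ge0}$ lies in $C^*$ iff $m_{2j}\ge 0$ for all $j$, the even-index subsequence is log-convex, and $m_{2i+1}^2\le m_{2i}m_{2i+2}$ for all $i$. Applying this to $L$ (moments $(a_k)$), to $p\mapsto L(xp)$ (moments $(a_{k+1})$), to $p\mapsto L((1-x)p)$ (moments $(\delta_k)$ with $\delta_k := a_k-a_{k+1}$) and to $p\mapsto L(x(1-x)p)$ (moments $(\delta_{k+1})$), and combining, one finds: $L\in K^*$ if and only if both $(a_k)_{k\ge0}$ and $(\delta_k)_{k\ge0}$ are non-negative and log-convex --- equivalently, both are positive sequences whose successive ratios lie in $(0,1]$ and are non-decreasing in $k$.

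The third step is to exhibit such an $L$ with $L(f)<0$. Expanding $f = x^4 - 2x^3 + \tfrac32 x^2 - \tfrac12 x + \tfrac1{16} + \tfrac1{1000}$, the condition $L(f) < 0$ reads $a_4 - 2a_3 + \tfrac32 a_2 - \tfrac12 a_1 + \bigl(\tfrac1{16}+\tfrac1{1000}\bigr)a_0 < 0$. One valid choice is $a_0 = 1$, $a_1 = \tfrac12$, and $a_k = \tfrac3{10}\bigl(\tfrac23\bigr)^{k-2}$ for $k\ge 2$: the ratios of $(a_k)$ are $\tfrac12, \tfrac35, \tfrac23, \tfrac23,\dots$ and those of $(\delta_k)$ are $\tfrac25, \tfrac12, \tfrac23, \tfrac23,\dots$, all in $(0,1)$ and non-decreasing, while $a_4 - 2a_3 + \tfrac32 a_2 - \tfrac12 a_1 + \bigl(\tfrac1{16}+\tfrac1{1000}\bigr)a_0 = \tfrac2{15} - \tfrac25 + \tfrac9{20} - \tfrac14 + \tfrac1{16} + \tfrac1{1000} = -\tfrac{19}{6000} < 0$. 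This contradicts $L(f)\ge 0$, so no such representation of $f$ exists. (Conceptually, one wants a moment sequence that agrees with a small atomic measure on $[0,1]$ through degree $2$ but slightly overshoots at degree $3$ --- permitted by log-convexity, although not by the full Hankel conditions of a genuine $[0,1]$-moment sequence.)

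The main obstacle is the third step. Every bona fide measure $\mu$ on $[0,1]$ gives $\int f\,d\mu \ge \tfrac1{1000}\,\mu([0,1]) > 0$, so the separating functional must genuinely fail to be a moment functional; the available room is exactly the gap between mere log-convexity and full positive-semidefiniteness of the Hankel-type data --- which is precisely what the circuit description in Theorem~\ref{thm:equiv} isolates --- and this gap is narrow, so the additive constant $\tfrac1{1000}$ must be chosen small enough to fit inside it. A convenient feature of working with $L$ on all of $\R[x]$ is that no a priori degree bound on the $p_i$ is needed, since every degree is handled at once.
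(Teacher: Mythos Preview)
Your proof is correct and follows essentially the same strategy as the paper: exhibit a linear functional in the dual of $C + xC + (1-x)C + x(1-x)C$ (characterized via Theorem~\ref{thm:equiv}) that is negative on $f$. The execution differs only in that you work on all of $\R[x]$ at once and give an explicit description of $K^*$ as the set of moment sequences for which both $(a_k)$ and $(a_k - a_{k+1})$ are non-negative and log-convex, whereas the paper truncates to a finite degree and uses the specific witness $v = (25/18,\,5/9,\,1/4,\,1/8,\dots)$, obtaining $v(f) = -1/480$; your different witness $a_0=1,\ a_1=1/2,\ a_k = \tfrac{3}{10}(2/3)^{k-2}$ with $L(f)=-19/6000$ works equally well. (One harmless imprecision: in your ``equivalently'' clause, the condition that the ratios of $(\delta_k)$ lie in $(0,1]$ is not actually forced by membership in $K^*$ --- only non-negativity and log-convexity of $(\delta_k)$ are --- but your example satisfies it anyway, so nothing is affected.)
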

Note that a SONC analogue of Putinar's Positivstellensatz would even assert a representation of $f$ using only $p_0, p_1$ and $p_2$.

\begin{proof}
	As a notation, for $r \in \N$ we set $\CS(r) := \CS(\set{0,1,\dotsc, r} \cap 2\N, \set{0,1,\dotsc, r} \setminus 2\N)$. This is the cone of SONC polynomials of degree up to $r$.

	The polynomial $f$ is clearly positive on $[0,1]$ (in fact, on $\R$), so we only need to show that it does not have a representation as in the claim.
	
	Assume to the contrary that there exist SONC polynomials $p_0, p_1, p_2$, 
	$p_3$ such that \eqref{eq:putinar1} holds.
	Let $d$ be the maximum of the degrees of the $p_i$.
	Then $f$ is contained in $\CS(d+2) + x \CS(d+1) + (1-x)\CS(d+1) + x(1-x)\CS(d)$.
	On the other hand, consider the vector
	\[
	v := \left(\frac{25}{18}, \frac{5}{9}, \frac{1}{2^2}, \frac{1}{2^3}, \dotsc, \frac{1}{2^{d+2}}\right)\in \R^{\set{0,\dotsc,d+2}}.
	\]
	A direct computation shows that
	\[
	v(f) = -\frac{1}{288} + \frac{25}{18}\frac{1}{1000} = -\frac{1}{480} < 0.
	\]
	Hence, once we show that $v$ lies in
	\begin{multline*}
	\Big(\CS(d+2) + x\CS(d+1) + (1-x)\CS(d+1) + x(1-x)\CS(d)\Big)^* \\
	\qquad = \CS(d+2)^* \cap (x\CS(d+1))^* \cap ((1-x)\CS(d+1))^* \cap (x(1-x)\CS(d))^*,
	\end{multline*}
	we obtain a contradiction.
	
	For this, note that we only need to consider inequalities involving the first two components of $v$, because apart from those $v$ equals the vector 
	$((\frac{1}{2})^{\alpha})_{0 \leq \alpha \leq d+2}$, which is clearly contained in the cone.
	Further, note that $v \in (x \CS(d+1))^*$ if and only if the shifted vector $(v_1, v_2, \dotsc, v_{d+2})\in \R^{\set{0,\dotsc,d+1}}$ where we omitted the $0$-th coordinate lies in $\CS(d+1)^*$.
	Similarly, $v$ lies in $((1-x)\CS(d+1))^*$ if and only if $(v_0-v_1, v_1-v_2, \dotsc, v_{d+1}-v_{d+2})$ lies in $\CS(d+1)^*$,
	and an analogous description holds for $(x(1-x) \CS(d))^*$.
	Using these observations, it is a straightforward computation to verify that $v$ lies in the cone.
\end{proof}

\subsection{Functions with simplex Newton polytopes}

We provide a subclass of functions for which non-negativity coincides
with containment in the $\cS$-cone $\CS(\cA,\cB)$.

\begin{proposition}\label{prop:simplex}
	Let $\emptyset \neq \cA \subseteq \R^n$, $\cB \subseteq \N^n \setminus(2\N)^n$ be finite sets and $f = \sum_{\alpha\in\cA} c_\alpha|\xb|^\alpha + \sum_{\beta\in\cB}d_\beta\xb^\beta \in\R[\cA, \cB]$.
	Assume that
	\begin{enumerate}
		\item $\conv(\cA)$ is a simplex and $\cB \subseteq \conv(\cA)$,
		\item $c_\alpha \leq 0$ for every $\alpha \in \cA$ which is not a vertex of $\conv(\cA)$, and
		\item $d_\beta \leq 0$ for every $\beta \in \cB$.
	\end{enumerate}
	Then $f$ is non-negative if and only if $f \in \CS(\cA,\cB)$.
	In this case, $f$ can be written as a sum of circuit functions using only vertices of $\conv(\cA)$ as outer exponents.
\end{proposition}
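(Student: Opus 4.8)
\emph{Plan.}
The implication ``$f\in\CS(\cA,\cB)\Rightarrow f$ non-negative'' is immediate from Definition~\ref{de:scone}, so the content is the converse together with the last sentence, and I would obtain both from a single membership statement. Write $V\subseteq\cA$ for the vertex set of the simplex $\conv(\cA)$. Since $V$ is affinely independent and $\cA\cup\cB\subseteq\conv(V)$, each $\Lambda(V,\cdot)$ occurring below is a singleton $\{\lambda(V,\cdot)\}$, every $\alpha\in\cA\setminus V$ lies in $\conv(V)\setminus V$, and every $\beta\in\cB$ lies in $\conv(V)$. Consider the cone
\[
	K\ :=\ \sum_{\gamma\in V}\R_+\cdot|\xb|^{\gamma}\ +\ \sum_{\alpha\in\cA\setminus V}\Pe{V,\alpha}\ +\ \sum_{\beta\in\cB}\Po{V,\beta},
\]
which is closed and pointed by the reasoning of Proposition~\ref{prop:closed} and satisfies $K\subseteq\CS(\cA,\cB)$. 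The plan is to prove $f\in K$. Since $K$ is closed and convex, this is equivalent to $(v,w)(f)\ge 0$ for all $(v,w)\in K^{*}$, and Lemma~\ref{lem:dualcone} applied to each summand identifies $K^{*}$ as the set of $(v,w)$ with $v_{\alpha}\ge 0$ for all $\alpha\in\cA$, with $\ln v_{\alpha}\le\sum_{\gamma\in V}\lambda(V,\alpha)_{\gamma}\ln v_{\gamma}$ for $\alpha\in\cA\setminus V$, and with $\ln|w_{\beta}|\le\sum_{\gamma\in V}\lambda(V,\beta)_{\gamma}\ln v_{\gamma}$ for $\beta\in\cB$ (using $0\ln 0=0$, $\ln 0=-\infty$).

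Fixing $(v,w)\in K^{*}$, I would exponentiate these inequalities to obtain $0\le v_{\alpha}\le\prod_{\gamma\in V}v_{\gamma}^{\lambda(V,\alpha)_{\gamma}}$ and $|w_{\beta}|\le\prod_{\gamma\in V}v_{\gamma}^{\lambda(V,\beta)_{\gamma}}$, and then, using $v\ge 0$ and the sign hypotheses $c_{\alpha}\le 0$ ($\alpha\in\cA\setminus V$), $d_{\beta}\le 0$ ($\beta\in\cB$), estimate term by term
\[
	(v,w)(f)\ \ge\ g(v)\ :=\ \sum_{\gamma\in V}v_{\gamma}c_{\gamma}\ -\ \sum_{\alpha\in\cA\setminus V}|c_{\alpha}|\prod_{\gamma\in V}v_{\gamma}^{\lambda(V,\alpha)_{\gamma}}\ -\ \sum_{\beta\in\cB}|d_{\beta}|\prod_{\gamma\in V}v_{\gamma}^{\lambda(V,\beta)_{\gamma}},
\]
so everything reduces to the inequality $g(v)\ge 0$ for all $v\in\R_{\ge 0}^{V}$. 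This is the crux. The function $g$ is positively homogeneous of degree one, since $\sum_{\gamma}\lambda(V,\cdot)_{\gamma}=1$. Moreover, if $v\in\R_{>0}^{V}$ is \emph{realizable}, i.e. $v_{\gamma}=\xb^{\gamma}$ for some $\xb\in\R_{>0}^{n}$, then $\prod_{\gamma}v_{\gamma}^{\lambda(V,\alpha)_{\gamma}}=\xb^{\alpha}=|\xb|^{\alpha}$ and $\prod_{\gamma}v_{\gamma}^{\lambda(V,\beta)_{\gamma}}=\xb^{\beta}$, and since $c_{\alpha}=-|c_{\alpha}|$, $d_{\beta}=-|d_{\beta}|$ on the relevant index sets one gets the \emph{identity} $g(v)=f(\xb)\ge 0$; under the sign conditions a positive evaluation point is worst-case. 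It then remains to reduce an arbitrary $v\in\R_{>0}^{V}$ to a realizable one: in logarithmic coordinates the realizable vectors form a linear subspace $L\subseteq\R^{V}$, equal to $\R^{V}$ if $V$ is linearly independent and otherwise to the hyperplane $\{\,y:\sum_{\gamma}\eta_{\gamma}y_{\gamma}=0\,\}$, where $\eta$ spans the one-dimensional space of linear relations of $V$; affine independence of $V$ forces $\sum_{\gamma}\eta_{\gamma}\ne 0$, so after normalizing $\sum_{\gamma}\eta_{\gamma}=1$ the scalar $t:=\prod_{\gamma}v_{\gamma}^{-\eta_{\gamma}}>0$ places $tv$ in $L$, whence $g(v)=t^{-1}g(tv)\ge 0$; by continuity this extends to $\R_{\ge 0}^{V}$. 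Thus $(v,w)(f)\ge g(v)\ge 0$, so $f\in K^{**}=K$, and in particular $f\in\CS(\cA,\cB)$.

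For the last assertion, I would write $f=\sum_{\gamma\in V}r_{\gamma}|\xb|^{\gamma}+\sum_{\alpha\in\cA\setminus V}g_{\alpha}+\sum_{\beta\in\cB}h_{\beta}$ according to the definition of $K$, with $r_{\gamma}\ge 0$, $g_{\alpha}\in\Pe{V,\alpha}$, $h_{\beta}\in\Po{V,\beta}$. Comparing the coefficient of $|\xb|^{\alpha}$ on both sides shows that the inner coefficient of $g_{\alpha}$ equals $c_{\alpha}\le 0$, and comparing the coefficient of $\xb^{\beta}$ shows that of $h_{\beta}$ equals $d_{\beta}\le 0$. Hence, by Theorem~\ref{thm:oddImplication}(3), each $g_{\alpha}$ splits as a non-negative circuit function with inner exponent $\alpha$ and outer exponents $\supp\lambda(V,\alpha)\subseteq V$, plus a non-negative combination of the monomials $|\xb|^{\gamma}$ ($\gamma\in V$), and likewise for each $h_{\beta}$ (a degenerate circuit when $\beta\in V$). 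Together with the summands $r_{\gamma}|\xb|^{\gamma}$, this exhibits $f$ as a sum of non-negative circuit functions whose outer exponents are vertices of $\conv(\cA)$.

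\emph{Main difficulty.} The only non-routine point is the mismatch between the dual variables at the vertices, which range over all of $\R_{\ge 0}^{V}$, and the tuples $(\xb^{\gamma})_{\gamma\in V}$ coming from genuine evaluation points, which fill only a subspace; this gap is bridged precisely by the degree-one homogeneity of $g$ together with the observation that the sign conditions make a positive evaluation point worst-case, so that $g$ \emph{equals} --- not merely dominates --- a value of $f$. Everything else (reading $K^{*}$ off Lemma~\ref{lem:dualcone}, the boundary passage by continuity, and the coefficient bookkeeping that upgrades $f\in K$ to a decomposition with vertex outer exponents) is routine.
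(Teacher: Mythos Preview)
Your proof is correct and follows essentially the same approach as the paper: both show $f\in K$ (the paper calls it $C$) by verifying $(v,w)(f)\ge 0$ for all $(v,w)$ in the dual, bounding the pairing by a value of $f$ at a positive point. The only substantive difference is that the paper isolates your ``reduce to realizable $v$'' step as a separate Lemma~\ref{lem:simplex} (for affinely independent $V$ and $v\in\R_{>0}^V$ there exist $\tau>0$, $\pb\in\R_{>0}^n$ with $v_\gamma=\tau\pb^\gamma$), whereas you reprove this inline via the log-linear structure and degree-one homogeneity of $g$; your explicit inclusion of the rays $\R_+\cdot|\xb|^\gamma$ also cleanly handles the degenerate case $\cA=V$, $\cB=\emptyset$.
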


This has been shown for SONC polynomials under a slightly stronger hypothesis in \cite[Theorem 5.5]{iliman-dewolff-resmathsci}.
Moreover, the analogous statement in the SAGE setting has been obtained in \cite[Theorem 10]{mcw-2018}.
We provide a simple proof using Theorem~\ref{thm:equiv} as well as the following lemma.

\begin{lemma}\label{lem:simplex}
	Let $\emptyset \neq A \subseteq \R^n$ be affinely independent and let $v \in \R^A_{> 0}$.
	Then there exists a point $\pb \in \R^n_{>0}$ and a scalar $\tau \in \R_{>0}$ such that $v_\alpha = \tau \pb^\alpha$ for all $\alpha \in A$.
\end{lemma}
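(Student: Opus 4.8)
The plan is to take logarithms and thereby convert the multiplicative statement into an affine interpolation problem. Write $\tau = e^{t}$ and $\pb = (e^{y_1}, \dots, e^{y_n})$ for unknowns $t \in \R$ and $y \in \R^n$. Since $\pb \in \R^n_{>0}$, the monomial $\pb^\alpha = \prod_{j} p_j^{\alpha_j}$ is well-defined even for negative or non-integer exponents $\alpha_j$, and since every $v_\alpha > 0$ we may take logarithms. Then the desired identities $v_\alpha = \tau \pb^\alpha$ are equivalent to the linear system
\[
	\ln v_\alpha = t + \alpha^T y \qquad \text{for all } \alpha \in A.
\]
So it suffices to solve this system for $(t,y) \in \R^{1+n}$; the resulting $\tau := e^{t} > 0$ and $p_j := e^{y_j} > 0$ then satisfy the claim.

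To solve the system, I would use the affine independence of $A$, which is precisely the statement that the vectors $\set{(1,\alpha) \with \alpha \in A} \subseteq \R^{1+n}$ are linearly independent. Consider the linear map $L \colon \R^{1+n} \to \R^A$ defined by $L(t,y) = \big( t + \alpha^T y \big)_{\alpha \in A}$. Its adjoint $L^* \colon \R^A \to \R^{1+n}$ sends the unit vector indexed by $\alpha$ to $(1,\alpha)$, and by the linear independence just noted, $L^*$ is injective. Hence $L$ is surjective, so the prescribed vector $(\ln v_\alpha)_{\alpha \in A}$ lies in the image of $L$, which produces a solution $(t,y)$. (Equivalently, one may extend $A$ to an affine basis of $\R^n$ and define the interpolating affine functional $\alpha \mapsto t + \alpha^T y$ directly.)

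There is no substantial obstacle here. The only two points needing a word of care are that taking logarithms is legitimate exactly because $v \in \R^A_{>0}$, and that the target point $\pb$ must be taken in $\R^n_{>0}$ so that $\pb^\alpha$ is meaningful for the real exponents occurring in $A$; the affine independence hypothesis enters in the single step guaranteeing surjectivity of $L$.
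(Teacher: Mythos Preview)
Your proof is correct and follows essentially the same approach as the paper: take logarithms to reduce the multiplicative identity $v_\alpha = \tau\pb^\alpha$ to the affine interpolation problem $\ln v_\alpha = t + \alpha^T y$, then use affine independence of $A$ to solve it. The paper phrases the solvability step as the existence of an affine map $\ell(\alpha) = \wb^T\alpha + c$ interpolating the values $\ln v_\alpha$, which is exactly your surjectivity-of-$L$ argument in different words.
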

\begin{proof}
	Since $A$ is affinely independent, there exists an affine map $\ell: \R^n \to \R$ such that $\ell(\alpha) = \ln(v_\alpha)$ for all $\alpha \in A$.
	Explicitly, there exists a $\wb \in \R^n$ and $c \in \R$ with $\wb^T\alpha + c = \ln(v_\alpha)$ for all $\alpha \in A$.
	We set $\tau := \exp(c)$ and $p_i := \exp(w_i)$ for $1 \leq i \leq n$.
	A straightforward computation shows that $\pb := (p_1, \dotsc, p_n)$ and $\tau$ satisfy our claim:
	\[
		v_\alpha = \exp( \ell(\alpha)) = \exp\left( \sum_{i} \alpha_i w_i + c \right) = \exp(c) \prod_i \exp(w_i)^{\alpha_i} = \tau \prod_i p_i^{\alpha_i} = \tau \pb^\alpha.
	\qedhere
	\]
\end{proof}

\begin{proof}[Proof of Proposition~\ref{prop:simplex}]
        For the nontrivial direction,
	let $f$ be non-negative and denote by $V \subseteq \cA$ the set of vertices of $\conv(\cA)$. We show that $f$ is contained in the sub-cone 
	\[ C := \sum_{\alpha \in \cA \setminus V} \Pe{V, \alpha} + \sum_{\beta \in \cB} \Po{V, \beta} \subseteq \CS(\cA,\cB).
	\]
	Let 
	$(v,w)$ be an arbitrary element of $C^* = \bigcap_{\alpha \in \cA \setminus V} (\Pe{V, \alpha})^* \cap \bigcap_{\beta \in \cB} (\Po{V, \beta})^*$.
	First, consider the case that $v_\alpha > 0$ for all $\alpha \in V$.
	Since $V$ is affinely independent, Lemma~\ref{lem:simplex} gives a $\pb \in \R^n_{>0}$ and a $\tau \in \R_{> 0}$ with $v_\alpha = \tau \pb^\alpha$ for all $\alpha \in V$.
	For $\beta \in (\cA \cup \cB) \setminus V$ set $\lambda^{(\beta)} := \lambda(V, \beta)$ and observe that by Lemma~\ref{lem:dualcone},
	\[
		|v_\beta| \leq \prod_{\alpha \in V} v_\alpha^{\lambda^{(\beta)}_\alpha} = \tau \pb^{\sum_{\alpha\in V} \alpha \lambda^{(\beta)}_\alpha} = \tau \pb^\beta, 
	\]
	respectively $|w_\beta| \leq \tau \pb^\beta$.
	Hence,
	\[
	\begin{aligned}
	(v,w)(f) &= \sum_{\alpha\in V} v_\alpha c_\alpha + \sum_{\alpha \in \cA \setminus V} v_\alpha c_\alpha + \sum_{\beta \in \cB} w_\beta d_\beta \\
	&\geq \sum_{\alpha\in V} v_\alpha c_\alpha - \sum_{\alpha \in \cA \setminus V} |v_\alpha c_\alpha| - \sum_{\beta \in \cB} |w_\beta d_\beta| \\
	&\geq \sum_{\alpha\in V} \tau \pb^\alpha c_\alpha - \sum_{\alpha \in \cA \setminus V} |\tau \pb^\alpha c_\alpha| - \sum_{\beta \in \cB} |\tau \pb^\beta d_\beta|. 
	\end{aligned}
	\]
	Therefore, the hypotheses (2) and (3) imply that
	\[
	  (v,w)(f) \geq \tau f(\pb) \geq 0.
	\]
	In the case $v_{\alpha} = 0$ for some $\alpha \in V$, continuity of 
	the mapping in~\eqref{eq:pairing} implies $(v,w)(f) \ge 0$ as well.
	Altogether, $f \in C^{**} = C \subseteq \CS(\cA,\cB)$.
\end{proof}

\subsection{Approximating non-negative polynomials by SONC polynomials}

Unlike the situation with sum of squares polynomials, 
not every non-negative univariate polynomial is a SONC polynomial.
However, in this section we show that non-negative univariate polynomials 
can at least be approximated by SONC polynomials.

For a univariate polynomial $f = \sum_{i=0}^{d} c_i x^i$ we set
\[ \hat{f} := c_0 - \sum_{i=1}^{d-1} |c_i| x^i + c_d x^d. \]
This is very similar to the SAGE-representative of a polynomial considered in \cite[Section~5]{mcw-2018}.

\begin{theorem}\label{thm:approx2}
	Let $f = \sum_{i=0}^d c_i x^i \in \R[x]$ be a univariate polynomial of even degree $d$ with positive constant coefficient.
	Let $x_0 := \inf\{x \in \R_+ \with \hat{f}(x) < 0\}$.
	\begin{enumerate}
		\item If $x_0 = +\infty$ (i.e., if $\hat{f}(x) \geq 0$ for all $x \in \R$), then $f$ is a SONC polynomial.
		\item Otherwise, there exists a sequence $(p_N)_N \subseteq \R[x]$ of SONC polynomials which converges to $f$ uniformly on every compact subset of the open interval $(-x_0, x_0)$.
	\end{enumerate}
\end{theorem}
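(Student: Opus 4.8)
The plan is to handle the two parts by quite different mechanisms, reducing each to results already in the paper.

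\smallskip
\noindent\emph{Part (1).} The idea is to realise $f$ inside an $\cS$-cone that is allowed to use odd exponents, and then to cut back down to polynomials via Proposition~\ref{pr:nocancel}. Regard $g(x):=\hat f(|x|)=c_0-\sum_{i=1}^{d-1}|c_i|\,|x|^i+c_d|x|^d$ as an element of $\R[\set{0,1,\dots,d},\emptyset]$. Its Newton polytope $\conv\set{0,\dots,d}=[0,d]$ is a simplex with vertices $0$ and $d$, every non-vertex coefficient equals $-|c_i|\le 0$, and $g(x)=\hat f(|x|)\ge 0$ for all $x\in\R$ by the hypothesis $x_0=+\infty$ (note $\hat f(x)\ge\hat f(|x|)$ always, since the odd-exponent terms $-|c_i|x^i$ only increase for $x<0$); hence Proposition~\ref{prop:simplex} gives $g\in\CS(\set{0,\dots,d},\emptyset)$. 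Next, $f(x)-g(x)=\sum_{i=1}^{d-1}\bigl(c_ix^i+|c_i|\,|x|^i\bigr)$, and each summand is a non-negative AG function: for even $i$ it is the monomial $(c_i+|c_i|)|x|^i$, and for odd $i$ it equals $|c_i|\bigl(|x|^i\pm x^i\bigr)\ge 0$, an odd AG function supported on $(\set i,i)$. Thus $f\in\CS(\set{0,1,\dots,d},\set{1,3,\dots,d-1})$. Since $f$ is a polynomial of degree at most $d$, it lies in $\R[\set{0,2,\dots,d},\set{1,3,\dots,d-1}]$, so Proposition~\ref{pr:nocancel} yields $f\in\CS(\set{0,2,\dots,d},\set{1,3,\dots,d-1})$, which by Remark~\ref{rem:SONCequals}(2) is exactly the SONC cone; hence $f$ is SONC.

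\smallskip
\noindent\emph{Part (2), set-up and the easy case.} It suffices to approximate $f$ uniformly on $[-\rho,\rho]$ for each $\rho<x_0$, since every compact subset of $(-x_0,x_0)$ lies in such an interval. The plan is to perturb $f$ by a single high-degree monomial: set $p_N:=f+\zeta_Nx^{D_N}$ with $D_N$ even, $D_N\to\infty$, and $\zeta_N>0$. As $p_N$ has positive constant coefficient and even degree, Part~(1) applies to $p_N$; moreover $\widehat{p_N}(x)=c_0-\sum_{i=1}^{d}|c_i|x^i+\zeta_Nx^{D_N}=c_0-P(x)+\zeta_Nx^{D_N}$, where $P(x):=\sum_{i=1}^d|c_i|x^i$, so $p_N$ is SONC whenever $\zeta_Nx^{D_N}\ge P(x)-c_0$ for all $x\ge 0$. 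On the other hand $\lVert p_N-f\rVert_{\infty,[-\rho,\rho]}=\zeta_N\rho^{D_N}$, so we also need $\zeta_N\rho^{D_N}\to 0$ for every $\rho<x_0$. If $c_d<0$, then $\hat f$ is strictly decreasing on $\R_{>0}$ with unique root $x_0$, which gives $P(x_0)=c_0$; using that $P$ is increasing one gets $P(x)\le c_0$ for $x\le x_0$, and for $x\ge x_0$ one has $P(x)\le c_0(x/x_0)^d\le c_0(x/x_0)^{D_N}$. Hence $\zeta_N:=c_0\,x_0^{-D_N}$ forces $\widehat{p_N}\ge 0$ on $\R_{\ge 0}$, so $p_N$ is SONC, and $\zeta_N\rho^{D_N}=c_0(\rho/x_0)^{D_N}\to 0$ for $\rho<x_0$. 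This settles the case $c_d<0$.

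\smallskip
\noindent\emph{The main obstacle: $c_d>0$.} Here the coarse certificate "$\widehat{p_N}\ge 0$'' is not enough. Indeed $P$ now has a root $w_0$ with $w_0<x_0$: evaluating the defining relation $\hat f(x_0)=0$ gives $P(x_0)=c_0+2c_dx_0^d>c_0$, so $P-c_0$ is positive on $(w_0,x_0]$, and evaluating $\zeta_Nx^{D_N}\ge P(x)-c_0$ at any fixed $\rho\in(w_0,x_0)$ forces $\zeta_N\rho^{D_N}\ge P(\rho)-c_0>0$. Thus no choice of $(\zeta_N,D_N)$ can make $\widehat{p_N}\ge 0$ \emph{and} converge on all of $(-x_0,x_0)$ — the obstruction being that $c_dx^d$ enters $\hat f$ with its favourable sign but enters $\widehat{p_N}$ with the opposite sign once it is no longer the leading term. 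To reach the sharp radius $x_0$ one must certify $p_N\in\CS$ without going through $\widehat{p_N}\ge 0$: keep $c_dx^d$ as a free non-negative monomial and apply the circuit description of the $\cS$-cone (Theorem~\ref{thm:oddImplication} and the reduced even/odd circuit form of Theorem~\ref{thm:equiv}, together with Proposition~\ref{pr:nocancel}), arranging the circuits that cover the exponents $1,\dots,d-1$ so that the degree-$d$ mass is not "wasted''. Producing from this a threshold on $\zeta_N$ that is still compatible with $\zeta_N\rho^{D_N}\to 0$ for every $\rho<x_0$ — perhaps via the explicit circuit decomposition furnished by Proposition~\ref{prop:AGisscf} on a suitably enlarged support — is the technical crux of Part~(2); the reduction to $x_0=1$ by the dilation $x\mapsto x_0x$ (which preserves SONC membership and rescales the convergence interval) is a convenient first simplification.
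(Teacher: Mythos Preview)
Your Part~(1) is correct and essentially equivalent to the paper's argument: both reduce to Proposition~\ref{prop:simplex}, and your detour through the enlarged $\cS$-cone followed by Proposition~\ref{pr:nocancel} is a clean alternative to the paper's observation that flipping the sign of the inner coefficient of a non-negative circuit polynomial preserves non-negativity. Your treatment of Part~(2) in the case $c_d<0$ is also correct.

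The genuine gap is Part~(2) with $c_d>0$, which you explicitly leave open. Your diagnosis is accurate---demanding $\widehat{p_N}\ge 0$ is too coarse, because once $c_dx^d$ is no longer the leading term it enters $\widehat{p_N}$ with the wrong sign---but you do not supply the replacement certificate. The paper's solution is to bypass $\widehat{p_N}$ entirely and certify membership in $\CS(N)$ via the dual cone: with $p_N=f+c^*x_0^{-N}x^N$ for a single constant $c^*$ independent of $N$, one verifies $v(p_N)\ge 0$ for every $v\in\CS(N)^*$ directly. The circuit inequalities of Theorem~\ref{thm:equiv} give $|v_i|\le v_0^{1-i/d}v_d^{i/d}$ for $1\le i\le d$, from which
\[
v(f)\ \ge\ v_0 c_0-\sum_{i=1}^{d-1}|c_i|\,v_0^{1-i/d}v_d^{i/d}+c_d v_d\ =\ v_0\,\hat f\bigl((v_d/v_0)^{1/d}\bigr),
\]
and this is non-negative whenever $v_d\le x_0^d v_0$; note that $c_d$ is never replaced by $-|c_d|$, so the loss you identified does not occur. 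In the complementary region $v_d>x_0^d v_0$, the circuit inequality $v_d\le v_0^{1-d/N}v_N^{d/N}$ forces $v_N$ to be large, and together with a compactness bound $v(f)\ge m(v_0+x_0^{-d}v_d)$ obtained on the slice $\{v\in\CS(d)^*: v_0+x_0^{-d}v_d=1\}$ one gets $v(p_N)\ge 0$ for $c^*=2|m|$. This dual argument is precisely the device you were looking for to ``keep $c_dx^d$ with its favourable sign''.
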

Note that the $p_N$ have the property that $\deg p_N \to \infty$ for $N \to \infty$.
Part (1) is essentially a special case of Proposition~\ref{prop:simplex}, which itself has been obtained before by
 de Wolff and Iliman \cite[Theorem 5.5]{iliman-dewolff-resmathsci}, see also \cite[Theorem 10]{mcw-2018}. Hence, only part (2) is new. It 
can be seen as a univariate SONC analogon of
the (even multivariate) approximation result in terms of sum of squares polynomials
by Lasserre and Netzer (see \cite{lasserre-netzer-2007,lasserre-2007}),
where our result also has a restriction to $(-x_0,x_0)$.

\begin{proof}
	For part (1), note that $\hat{f}$ is non-negative on $\R_+$ if and only if it is non-negative on $\R$. Moreover, it satisfies the hypothesis of Proposition~\ref{prop:simplex}, and thus $x_0 = \infty$ implies that $\hat{f}$ is a SONC polynomial.
	Moreover, Proposition~\ref{prop:simplex} implies that $\hat{f}$ can be written as a sum of circuit polynomials using only the constant term and the highest term as outer exponents.
	Since a non-negative circuit polynomial with negative inner coefficient remains non-negative under flipping the sign of the inner coefficient, $f$ is a SONC polynomial as well.

	It remains to show part (2). As in the proof of Theorem~\ref{thm:noputinar}, we use the shorthand notation $\CS(r)$ for $\CS(\set{0,1, \dotsc, r} \cap 2\N, \set{0,1, \dotsc, r} \setminus 2\N)$ for $r \in \N$. Since the two parameters of $\CS$ are disjoint sets, we shortly write elements in the dual cone as $v$ rather than $(v,w)$, by slight abuse of notation.
	For $N > d$ define
	\[
	p_N := \begin{cases}
		f + \frac{c^*}{x_0^N} x^{N} & \text{ for $N > d$ even,} \\
		p_{N-1}                     & \text{ for $N > d$ odd}
	\end{cases}
	\]
	for some constant $c^* > 0$.
	It is immediately clear that if $|x| < x_0$, then $f(x) - p_N(x) = c^* (x / x_0)^N$ converges to zero for $N \to \infty$, and we even have uniform 
	convergence on every compact subset of $(-x_0,x_0)$.
	It remains to find a suitable value of $c^*$ such that $p_N$ is 
	a SONC polynomial.
	
	We claim that $v(f) \ge m (v_0 + x_0^{-d}v_{d})$ for every 
	$v \in \CS(d)^*$, where $m$ is the
	minimum of the function $v \mapsto v(f)$ on the set
	\[ K := \left\{ \frac{v}{v_0 + x_0^{-d} v_d} \with 
	  v \in \CS(d)^* \setminus \{0\} \right\}
	=  \left \{ v \in \CS(d)^* \with v_0 + x_0^{-d}v_{d} = 1 \right\}. 
	\]
	
	To prove the claim, consider a fixed $v \in K$ and observe that 
	$v_{d} \ge 0$ because $d$ is even. The definitions of $K$ and $x_0$
	imply $v_0 \le 1$ and $v_{d} \leq x_0^{d}$.
	For $1 \le i < d$, Theorem~\ref{thm:equiv}(c) 
	applied on the circuit with outer
	exponents $0$, $d$ and inner exponent $i$ then gives
	\[
	  |v_i| \leq v_0^{1-i/d} v_{d}^{i/d}
	  \le 1 \cdot (x_0^{d})^{i/d} \le x_0^i.
	\]
	Hence, the non-empty set $K$ is bounded and compact.
	It follows that the function $v \mapsto v(f)$ attains the minimum value $m$
	on $K$. This implies the claim.

	Since for $N>d$ every element of $\CS(N)^*$ can be truncated to obtain an element of $\CS(d)^*$, it follows that the auxiliary claim also holds for every $v \in \CS(N)^*$.
	We set $c^* := 2|m|$ and it remains to show that for this value of $c^*$ our candidate $p_N$ is a SONC polynomial.
	For this, it is sufficient to show that for $N > d$ we have 
	$v(p_N) \geq 0$ for all $v \in \CS(N)^*$, and we may assume that $N$ is even.
	We distinguish two cases.
	\begin{asparadesc}
		\item[\normalfont{\emph{Case $v_{d} \leq x_0^{d} v_0$.}}]
		The inequalities defining $\CS(N)^*$ imply that $|v_i| \leq v_0^{1-i/d}v_{d}^{i/d}$ for $0 \leq i \leq d$.
		Using this, we derive
		\begin{align*}
		v(p_N) &= \sum_{i=0}^{d} v_i c_i + v_N \frac{c^*}{x_0^N}
		\geq v_0 c_0 - \sum_{i=1}^{d-1} |v_i c_i| + v_{d} c_{d} + v_N \frac{c^*}{x_0^N} \\
		&\geq v_0 c_0 - \sum_{i=1}^{d-1} v_0^{1-i/d}v_{d}^{i/d} |c_i| + v_{d} c_{d} + v_N\frac{c^*}{x_0^N}.
		\end{align*}
		If $v_0 = 0$, then this expression is non-negative since both $v_{d}$  and $v_N$ are.
		Otherwise, we continue as follows:
		\begin{align*}
		v(p_N) &= v_0\left(c_0 - \sum_{i=1}^{d-1}\left(\frac{v_{d}}{v_0}\right)^{i/d} |c_i| +  \left(\frac{v_{d}}{v_0}\right)^{d/d} c_{d}\right) + v_N\frac{c^*}{x_0^N}\\
		&= v_0 \hat{f}\left(\left(\frac{v_{d}}{v_0}\right)^{1/{d}}\right) + v_N \frac{c^*}{x_0^N} \geq 0
		\end{align*}
		for any $c^* \geq 0$ by the choice of $x_0$.
		\item[\normalfont{\emph{Case $v_{d} > x_0^{d} v_0$.}}]
		By Theorem~\ref{thm:equiv}(c) applied on the circuit with
		outer exponents $0$, $N$ and inner exponent $d$, we have
		$v_N \geq v_0^{-(N - d)/d} v_{d}^{N / d}$, so that
		using the hypothesis of the current case twice gives
		\begin{equation}\label{eq:case2}
			v_N > (x_0^{-d} v_{d})^{-(N - d)/d} v_{d}^{N / d} 
			= x_0^{N-d} v_{d} > \frac{x_0^N}{2}(v_0 + x_0^{-d}v_{d}).
		\end{equation}
		Since $v(p_N) = v(f) + v_N \frac{c^*}{x_0^{N}}$, 
		employing the auxiliary claim as well as~\eqref{eq:case2}
		we can conclude
		\begin{align*}
			v(p_N) &\geq (v_0+x_0^{-d}v_{d})m + \frac{x_0^N}{2}(v_0+x_0^{-d}v_{d}) \frac{c^*}{x_0^{N}} \\
			&= (v_0+x_0^{-d}v_{d})\cdot(m + |m|) \ge 0. \qedhere 
		\end{align*}
	\end{asparadesc}
\end{proof}

As a corollary of the theorem, we see that we can also approximate in the $(x)$-adic topology:
\begin{corollary}
	Let $f \in \R[x]$ be a univariate polynomial with $f(0) > 0$.
	Then for each $N \geq 0$ there exists a SONC polynomial $p_N \in \R[x]$ such that
	\[ f \equiv p_N \mod{x^N}. \]
\end{corollary}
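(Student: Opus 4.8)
The plan is to deduce the corollary from Theorem~\ref{thm:approx2} applied to a suitable modification of $f$. The case $N=0$ is trivial (modulo $x^0=1$ any two polynomials are congruent, so one may take $p_0=1$), so assume $N\ge 1$. First I would fix an even integer $D$ with $D\ge N$ and $D>\deg f$, and set $\tilde f := f + x^D$. By construction $\tilde f$ is a univariate polynomial of even degree $D$ whose constant coefficient equals $f(0)>0$, so Theorem~\ref{thm:approx2} applies to $\tilde f$. Moreover $\tilde f - f = x^D$ has degree $D\ge N$, hence $\tilde f \equiv f \pmod{x^N}$; so it suffices to produce a SONC polynomial congruent to $\tilde f$ modulo $x^N$.

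Next I would invoke Theorem~\ref{thm:approx2} for $\tilde f$ and distinguish the two cases in its statement. If $\hat{\tilde f}(x)\ge 0$ for all $x$ (i.e.\ $x_0=+\infty$), then by part~(1) the polynomial $\tilde f$ is itself a SONC polynomial and we simply set $p_N := \tilde f$. Otherwise $x_0<\infty$, and here I would use the explicit form of the approximants constructed in the proof of part~(2): there is a constant $c^*\ge 0$ such that $\tilde f + \frac{c^*}{x_0^{M}}\,x^{M}$ is a SONC polynomial for every even $M>D$. Picking any such $M$ and putting $p_N := \tilde f + \frac{c^*}{x_0^{M}}\,x^{M}$, we obtain a SONC polynomial which differs from $\tilde f$ only in the single monomial $x^M$ of degree $M>D\ge N$; hence $p_N \equiv \tilde f \equiv f \pmod{x^N}$, as required.

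The argument is short, and the only point that needs care is that Theorem~\ref{thm:approx2}(2) by itself gives only uniform approximation on compact subsets of $(-x_0,x_0)$, which does not entail an exact congruence. The key observation that makes everything go through is that the approximating polynomials produced in the proof of that part are obtained from $\tilde f$ by adding a single high-degree monomial, so they agree with $\tilde f$ --- and therefore with $f$ --- modulo every fixed power of $x$ as soon as that monomial has large enough degree. I would also note that passing from $f$ to $\tilde f = f + x^D$ is genuinely needed, since $f$ itself may have odd degree (or a leading coefficient of the wrong sign) and hence need not satisfy the hypotheses of Theorem~\ref{thm:approx2}; adding the even-degree monomial $x^D$ with $D>\deg f$ repairs this while leaving the low-order coefficients of $f$ untouched.
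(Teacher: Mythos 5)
Your proof is correct and follows essentially the same route as the paper: both exploit the explicit form $p_M = g + \tfrac{c^*}{x_0^M}x^M$ of the approximants constructed in the proof of Theorem~\ref{thm:approx2}, which modifies $g$ only by a single monomial of arbitrarily high degree, so all low-order coefficients are preserved. The only difference is how the even-degree hypothesis is arranged -- the paper treats $f$ as a polynomial of higher degree with a padded zero leading coefficient, while you add $x^D$ to obtain a polynomial that genuinely has even degree with positive leading coefficient -- a cosmetic variation.
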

\begin{proof}
	If the degree of $f$ is odd, then we may consider $f$ as a polynomial of higher degree with leading coefficient $0$, which has even degree.
	The hypothesis $f(0) > 0$ implies that the $x_0$ of Theorem~\ref{thm:approx2} exists and is positive, hence we may consider the sequence $p_N$ from that theorem.
	From its construction in the proof of Theorem~\ref{thm:approx2}, it is clear that it satisfies our claim.
\end{proof}

\section{Outlook and open problems}

 We have introduced the $\cS$-cone as a unified framework for
  the classes of SAGE and SONC polynomials, provided characterizations
  of its dual cone and presented several new
  and several improved results associated with the dual viewpoint.
  The $\cS$-cone exhibits a prominent computationally tractable 
  class within the class of sparse non-negative polynomials.
  For further computational aspects building upon the projection-free
  descriptions of the dual cones from Section~\ref{se:circuits-dual},
  we refer to the subsequent work of the second author together with 
  Dressler, Heuer and de Wolff \cite{hnw-2020}.

  It remains a future task to further understand the
  relation of the $\cS$-cone and its specializations
  to the underlying class of all non-negative functions (in some special
  cases polynomials), both from the primal
  and the dual point of view. Specifically, the relation of the 
  SONC cone to the cone of sparse non-negative polynomials 
  and the dual SONC cone to sparse moment cones (as studied
  by Nie \cite{nie-2014}) deserve further study. 
  It is an open question whether SONC polynomials 
  are dense inside the non-negative ones.

  Moreover, since by the results in Section~\ref{se:putinar}, 
  the analogue of Putinar's
  Positivstellensatz already fails in the univariate case, it also
  remains a challenge to provide computationally attractive
  types of Positivstellens\"atze for the $\cS$-cone and its
  specializations.

\medskip

\noindent
{\bf Acknowledgment.} We thank the anonymous referees for their helpful
suggestions.

\iftoggle{usebiblatex}{%
\printbibliography%
}{
\bibliographystyle{amsplain}
\bibliography{bibSoncSage}
}

\end{document}